\documentclass[preprint,aps]{revtex4-2}

\usepackage[utf8]{inputenc}
\usepackage{amsmath,amssymb}
\usepackage{amsthm}

\usepackage{algorithm}
\usepackage{algpseudocode}
\usepackage{listings}
\usepackage{longtable}
\usepackage{graphicx}
\usepackage{hyperref}

\newtheorem{Theorem}{Theorem}

\newtheorem{Proposition}{Proposition}

\newtheorem{Definition}{Definition}
\newtheorem{Remark}{Remark}

\begin{document}

\title{Delay Modeling with Conformable and Caputo Derivatives: Analytical and Computational Insights}


\title{Delay Modeling with Conformable and Caputo Derivatives: Analytical and Computational Insights}

\author{Yhon Flores}
\affiliation{Departamento de Matemáticas, Universidad Católica del Norte, Avenida Angamos 0610, Casilla 1280, Antofagasta, Chile}

\author{Michel Molina del Sol}
\affiliation{Departamento de Matemáticas, Universidad Católica del Norte, Avenida Angamos 0610, Casilla 1280, Antofagasta, Chile}
\author{Genly Leon}
\email{genly.leon@ucn.cl}
\affiliation{Departamento de Matemáticas, Universidad Católica del Norte, Avenida Angamos 0610, Casilla 1280, Antofagasta, Chile}
\affiliation{Department of Physics, Universidad de Antofagasta, 1240000 Antofagasta, Chile}
\affiliation{Institute of Systems Science, Durban University of
Technology, Durban 4000, South Africa}
\affiliation{Centre for Space Research, North-West University, Potchefstroom 2520, South Africa}

\author{Byron Droguett}
\affiliation{Department of Physics, Universidad de Antofagasta, 1240000 Antofagasta, Chile}

\author{Guillermo Fernández-Anaya}
\affiliation{Departamento de Física y Matemáticas, Universidad Iberoamericana Ciudad de México, México}

\author{Yoelsy Leyva}
\affiliation{Departamento de Física, Facultad de Ciencias, Universidad de Tarapaca, Casilla 7-D, Arica, Chile}

\begin{abstract}
This work presents an analytical and computational study of fractional-order delay differential equations formulated using both the conformable and Caputo derivatives. For the conformable case, we develop the associated integral, exponential function, and Laplace transform, showing how the conformable Laplace framework preserves algebraic structure and facilitates explicit solutions. Delay terms are treated through series expansions and transform-based methods, ensuring causal and finite representations. In parallel, Caputo-based formulations are examined, highlighting the challenges posed by convolutional memory kernels and the potential for long-term numerical instability. Numerical implementations are carried out using mesh-aligned algorithms: Euler and Runge--Kutta schemes for conformable dynamics, and Euler, L2--$\sigma$, and a series--anchored predictor--corrector method for Caputo dynamics. Comparative experiments demonstrate that conformable derivatives yield stable, consistent agreement between analytic and numerical solutions, whereas Caputo dynamics require higher-order or series-anchored schemes to suppress discretization noise and maintain long-term accuracy. These results underscore the advantages of the conformable formalism in modeling dynamic phenomena with delay and memory, offering a tractable and physically interpretable alternative to integral-based fractional models.
\end{abstract}

\maketitle

 
\section{Introduction}
\noindent

Traditional differential models often fall short when capturing power-law dynamics characterized by frequency dependence, long-range memory, and non-locality. Fractional calculus offers an elegant extension to classical calculus, enabling differentiation and integration to be performed on non-integer orders. This generalization accounts for historical system behavior, making it particularly suitable for applications in which past states influence present dynamics.

Fractional derivatives have become indispensable in modeling systems with complex temporal structure, including viscoelastic materials, electrical networks, and anomalous diffusion. Their utility spans diverse fields—from quantum field theory and gravity~\cite{Lim:2006hp, El-nabulsi:2013hsa}, to black hole physics~\cite{Vacaru:2010wn}, and cosmological studies~\cite{Moniz:2020emn, Garcia-Aspeitia:2022uxz, Gonzalez:2023who}. 
In these contexts, fractional frameworks enable refined observational tests and contribute to the derivation of viable equations of state~\cite{LeonTorres:2023ehd, Micolta-Riascos:2023mqo}.

Fractional extensions of Newtonian dynamics and cosmology have been widely investigated. Higher-order non-local variants of Newton’s laws~\cite{ELnABULSI201765} and fractional Friedmann--Robertson--Walker equations provide alternative descriptions of disordered or accelerated motion. Two main approaches dominate the field: (i) direct insertion of fractional operators into classical field equations~\cite{Barrientos:2020kfp}, and (ii) geometric formulations based on the Fractional Action-Like Variational Approach (FALVA)~\cite{El-nabulsi-Torres-2008}. Both have produced viable models consistent with late-time cosmic acceleration~\cite{LeonTorres:2023ehd, Rasouli2024}.

A broader overview is given in \cite{Marroquin:2024ddg}, where fractional calculus is presented as a framework for non-local and memory-dependent power-law phenomena. The authors review applications ranging from mechanics to cosmology. By constructing a fractional Einstein--Hilbert action with a non-minimally coupled scalar field $\xi R\phi^{2}$, they obtain modified cosmological equations and analyze their dynamics using asymptotic and dynamical-systems methods, showing that fractional-order gravity can reproduce late-time acceleration without dark energy.
Further progress appears in the fractional Einstein--Gauss--Bonnet scalar field cosmology of \cite{Micolta-Riascos:2024vbm}. Fractional calculus is used to modify the action, yielding fractional Friedmann and Klein--Gordon equations depending on the order $\mu$. Exact solutions with exponential potentials, Gauss--Bonnet couplings, and logarithmic scalar fields arise, governed by parameters $m$, $\alpha_{0}=t_{0}H_{0}$, and $\mu$. A dynamical-systems analysis shows that the Gauss--Bonnet coupling can mimic dark-sector behavior. Using cosmic chronometers, SNe Ia, black hole shadows, and strong lensing, the authors find $\alpha_{0}=1.38\pm0.05$, $m=1.44\pm0.05$, and $\mu=1.48\pm0.17$, consistent with $\Omega_{m,0}=0.311\pm0.016$ and $h=0.712\pm0.007$, implying $t_{0}=19.0\pm0.7$ Gyr. These results support the acceleration of power-law solutions and reinforce fractional calculus as a promising alternative to $\Lambda$CDM.

Recent advances in fractional calculus have highlighted the importance of incorporating memory and delay effects into dynamical models. In particular, fractional time-delay differential equations provide a powerful framework for describing systems in which past states influence the present evolution. These ideas have found compelling applications in cosmology, where delayed fractional dynamics can capture phenomena such as bulk viscosity and nonlocal interactions in the early universe. Micolta‑Riascos et al. (2025) \cite{retard} developed analytic and numerical techniques for fractional time‑delayed equations, proposed Laplace‑domain formulations, and demonstrated their relevance to cosmological studies, thereby establishing a rigorous foundation for memory‑aware modeling in gravitational contexts. 

Within the spectrum of non-integer derivatives, the conformable derivative introduced by Khalil~\cite{Khalil14} stands out for maintaining compatibility with classical differential rules—such as linearity and the product rule—thus simplifying its integration into existing differential frameworks. Defined as a deformation of the standard derivative for orders $\alpha \in (0,1]$, it offers computational tractability and intuitive structure, especially for systems requiring memory effects without full convolution-based formalism.

Recent advances have extended conformable calculus to delay differential equations (DDEs), which naturally incorporate history-dependent behavior. In population dynamics and control theory, such models demand tailored analytical tools, including Laplace-based solution strategies and adapted stability criteria. The conformable approach provides a clean, closed-form alternative to convolution-based fractional-delay models, preserving causal structure and simplifying the handling of initial conditions. Furthermore, Mohammadnezhad, Eslami, and Rezazadeh~\cite{Mohammadnezhad22} have made significant contributions in this area, providing sufficient conditions for asymptotic and exponential stability in nonlinear systems with delay under conformable derivatives, thus strengthening the applicability of the conformable framework in practical scenarios. 
    
The broader landscape of time-delayed differential systems has seen significant growth. Studies on summable dichotomies \cite{del2012bounded} and Volterra-type equations highlight the existence of bounded, nearly periodic solutions~\cite{del2018bounded,del2011almost}. Weighted exponential trichotomy provides insights into long-term dynamics for linear and nonlinear systems~\cite{cuevas2010weighted, vidal2008weighted}, while asymptotic expansions remain vital for systems with infinite delays~\cite{cuevas2009asymptotic}.

For nonlinear fractional DDEs, solution existence and stability remain central concerns~\cite{geremew2024existence}. Langevin-type equations with fractional delays are now leveraged in engineering applications~\cite{huseynov2021class}, supported by numerical methods including finite difference schemes~\cite{moghaddam2013numerical}, spectral collocation~\cite{dabiri2018numerical}, and computational algorithms~\cite{amin2021computational}. Optimal control using Dickson polynomials~\cite{chen2021optimal} and stabilization strategies~\cite{lazarevic2011stability, pakzad2013stability} and synchronization~\cite{luo2011complex} further enhance the practical relevance of this field.

Oscillatory systems governed by time-delayed fractional derivatives—relevant in electronics, mechanics, and biological modeling—are addressed through comprehensive stability frameworks~\cite{leung2013fractional, hu2016stability}. These studies underscore the need for rigorous mathematical tools that account for memory and delay, particularly in high-precision applications.

In this work, we investigate a class of delay fractional differential equations governed by conformable derivatives. We establish rigorous analytic foundations for their behavior, introduce a Laplace-domain formulation tailored to delayed terms, and develop solution techniques that are adaptable to both theoretical analysis and physical applications. We aim to provide a mathematically consistent, memory-aware framework that addresses the central challenges of fractional-delay modeling. In addition, we examine analogous problems formulated with Caputo derivatives. While both approaches represent distinct generalizations of the same classical integer-order problem, our findings indicate that Khalil’s conformable derivative framework yields more effective replacements in practice.

\noindent
Several appendices based on~\cite{mathai2008special,kiryakova2010special,yang2021theory,agarwal2020special} are included to ensure the study's self-contained nature.

\section{Preliminaries} \label{sec:preliminares}

Among formulations of fractional derivatives—Riemann–Liouville, Caputo, Grünwald–Letnikov, and Hadamard—an alternative is the conformable derivative proposed by Khalil \cite{Khalil14}, which seeks to simplify aspects of fractional calculus while retaining features of classical differentiation. Unlike traditional definitions that often require stronger integrability conditions or special functions, the conformable derivative allows direct work with classically differentiable functions in a more accessible framework.

\subsection{Conformable  Derivative}
\begin{Definition}[Khalil~\cite{Khalil14}]
Let $\alpha \in (0,1]$. The Conformable  derivative of order $\alpha$ of a function $f: [0, \infty) \to \mathbb{R}$ is defined as:
\begin{equation}
T_\alpha(f)(t) := \lim_{\epsilon \to 0} \frac{f\left(t + \epsilon t^{1 - \alpha}\right) - f(t)}{\epsilon}, \quad t > 0.
\end{equation}
\end{Definition}
\begin{Remark}
If this limit exists, we say that $f$ is $\alpha$-differentiable.
\end{Remark}

This derivative can be viewed as a temporal deformation of the classical derivative, with parameter $\alpha$ controlling the local temporal change of scale. If the function $f$ is differentiable in the classical sense, it can also be expressed as:
\begin{equation}
T_\alpha(f)(t) = t^{1 - \alpha} \frac{df}{dt}(t), \label{conformable}
\end{equation}
clearly revealing its connection to the standard derivative. This property is particularly useful when seeking analytical solutions to fractional differential equations via integral transforms.

The conformable derivative preserves many classical properties of calculus, such as:

\begin{Theorem}
Let $\alpha \in (0,1)$, and assume $f$ and $g$ are $\alpha$-differentiable at a point $t$. Then the Conformable  derivative satisfies the following properties:
\begin{enumerate}
  \item \textbf{Linearity:} $T_\alpha(af + bg) = a T_\alpha(f) + b T_\alpha(g)$, for all $a, b \in \mathbb{R}$.
  \item \textbf{Power Rule:}  $T_\alpha(t^\lambda)=\lambda t^{\lambda-\alpha}$, for all $\lambda \in \mathbb{R}$.
  \item \textbf{Product Rule:} $T_\alpha(fg)(t) = f(t) T_\alpha(g)(t) + g(t) T_\alpha(f)(t)$.
  \item \textbf{Quotient Rule:} $T_\alpha\left(\frac{f}{g}\right)(t) = \frac{f(t) T_\alpha(g)(t) - g(t) T_\alpha(f)(t)}{g^2(t)}$, provided $g(t) \neq 0$.
\end{enumerate}
\end{Theorem}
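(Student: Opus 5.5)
I would prove all four properties directly from the limit definition of $T_\alpha$, working at a fixed point $t>0$ and writing $h=\epsilon t^{1-\alpha}$. Since $t>0$ is fixed, $h\to 0$ exactly when $\epsilon\to 0$, and the difference quotient in the definition becomes $t^{1-\alpha}\,\frac{f(t+h)-f(t)}{h}$.

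\textbf{Linearity.} This is immediate. The difference quotient of $af+bg$ splits as $a$ times that of $f$ plus $b$ times that of $g$. Since both limits exist by hypothesis, the limit of the sum is $aT_\alpha(f)(t)+bT_\alpha(g)(t)$.

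\textbf{Power rule.} The function $t^\lambda$ is classically differentiable for $t>0$ and every real $\lambda$. I would therefore establish formula~\eqref{conformable} first, and then apply it. To prove~\eqref{conformable}, I would use the substitution $h=\epsilon t^{1-\alpha}$: the quotient equals $t^{1-\alpha}\frac{f(t+h)-f(t)}{h}$, which tends to $t^{1-\alpha}f'(t)$. With this in hand,
\begin{equation*}
T_\alpha(t^\lambda)=t^{1-\alpha}\cdot\lambda t^{\lambda-1}=\lambda t^{\lambda-\alpha}.
\end{equation*}

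\textbf{Product rule.} I would insert a mixed term in the numerator:
\begin{equation*}
f(t+\epsilon t^{1-\alpha})g(t+\epsilon t^{1-\alpha})-f(t)g(t)
=f(t+\epsilon t^{1-\alpha})\bigl[g(t+\epsilon t^{1-\alpha})-g(t)\bigr]+g(t)\bigl[f(t+\epsilon t^{1-\alpha})-f(t)\bigr].
\end{equation*}
Dividing by $\epsilon$, the second term tends to $g(t)T_\alpha(f)(t)$. For the first term I need $f(t+\epsilon t^{1-\alpha})\to f(t)$, that is, continuity of $f$ at $t$. This step is the main point requiring care, and I would prove it as a lemma: $\alpha$-differentiability at $t>0$ implies continuity at $t$. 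The argument writes
\begin{equation*}
f(t+\epsilon t^{1-\alpha})-f(t)=\epsilon\cdot\frac{f(t+\epsilon t^{1-\alpha})-f(t)}{\epsilon}\to 0\cdot T_\alpha(f)(t)=0,
\end{equation*}
and then uses that $h=\epsilon t^{1-\alpha}$ ranges over a full neighbourhood of $0$ as $\epsilon$ does. With continuity, the first term tends to $f(t)T_\alpha(g)(t)$, which completes the product rule.

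\textbf{Quotient rule.} I would first compute $T_\alpha(1/g)$. Write
\begin{equation*}
\frac{1}{g(t+\epsilon t^{1-\alpha})}-\frac{1}{g(t)}=-\frac{g(t+\epsilon t^{1-\alpha})-g(t)}{g(t+\epsilon t^{1-\alpha})\,g(t)}.
\end{equation*}
By the continuity lemma and $g(t)\neq 0$, the denominator is nonzero for small $\epsilon$ and tends to $g(t)^2$. Hence $T_\alpha(1/g)=-T_\alpha(g)/g^2$. Applying the product rule to $f\cdot(1/g)$ then gives the quotient formula.

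One thing needs checking before finishing: the product-rule expansion gives
\begin{equation*}
T_\alpha\!\left(\frac{f}{g}\right)=\frac{g\,T_\alpha(f)-f\,T_\alpha(g)}{g^2},
\end{equation*}
which is the negative of the expression as printed in the statement. I would flag the printed ordering as a sign typo and prove the standard form above.
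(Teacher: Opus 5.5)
Your proof is correct, and your sign correction is right: the paper's own appendix derivation of the quotient rule ends with $\frac{g\,T_\alpha f - f\,T_\alpha g}{g^2}$, so the ordering in the statement is a typo.

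Your route differs from the paper's. The paper first proves $T_\alpha f(t)=t^{1-\alpha}f'(t)$ for classically differentiable $f$. It then gets linearity, product, quotient and power rules in one line each, by multiplying the corresponding classical rule by $t^{1-\alpha}$; it never returns to the $\epsilon$-limit. You use that reduction only for the power rule. Linearity, product and quotient you prove directly from the limit definition, which costs you a continuity lemma and a separate reciprocal rule.

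What your version buys is fidelity to the stated hypothesis. The theorem assumes only $\alpha$-differentiability at $t$, whereas the paper's argument silently assumes ordinary differentiability. What the paper's version buys is brevity and automatic inheritance of every classical rule.

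Your opening observation already bridges the two. For fixed $t>0$, $h=\epsilon t^{1-\alpha}$ is a nonzero rescaling of $\epsilon$, and the defining quotient equals $t^{1-\alpha}\frac{f(t+h)-f(t)}{h}$. So $\alpha$-differentiability at $t>0$ is \emph{equivalent} to ordinary differentiability there. Stating this equivalence once would justify the paper's shortcut under the theorem's actual hypothesis. It would also let you drop the continuity lemma and the reciprocal computation, since both then follow from the classical facts.
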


In~\cite{Abdeljawad15}, T. Abdeljawad establishes the chain rule for the Conformable  derivative in the following Theorem:

\begin{Theorem}[~\cite{Abdeljawad15}]
Let $f, g:(0,\infty)\to\mathbb{R}$ be $\alpha$-differentiable functions with $\alpha \in (0,1)$. Then $f \circ g$ is $\alpha$-differentiable and for $t \ne 0$, $g(t) \ne 0$, we have:
\begin{equation}
T_\alpha(f \circ g)(t) = T_\alpha f(g(t)) \cdot T_\alpha g(t) \cdot g(t)^{\alpha-1}.
\end{equation}
\end{Theorem}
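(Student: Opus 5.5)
The idea is to reduce everything to the characterization of $\alpha$-differentiability in terms of classical differentiability away from the origin, and then use the classical chain rule. First I would establish, for $t>0$, that $f$ is $\alpha$-differentiable at $t$ if and only if it is classically differentiable there, with $T_\alpha f(t)=t^{1-\alpha}f'(t)$. This is the substitution $h=\epsilon t^{1-\alpha}$ in the defining limit: since $t^{1-\alpha}>0$ is a fixed nonzero constant, $h\to0$ exactly when $\epsilon\to0$, and
\begin{equation}
\frac{f(t+\epsilon t^{1-\alpha})-f(t)}{\epsilon}=t^{1-\alpha}\,\frac{f(t+h)-f(t)}{h}.
\end{equation}
So one limit exists iff the other does, and this gives \eqref{conformable} together with its converse. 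This equivalence is the only genuinely non-formal step. It is also where the hypothesis $t\neq0$ is used.

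Next I would apply this at the relevant points. Since $g$ is $\alpha$-differentiable at $t>0$, $g$ is classically differentiable at $t$ with $g'(t)=t^{\alpha-1}T_\alpha g(t)$. For $f$, the argument $g(t)$ must lie in the domain $(0,\infty)$, where the equivalence applies, and $g(t)\neq0$ ensures the factor $g(t)^{\alpha-1}$ is defined. There $f'(g(t))=g(t)^{\alpha-1}T_\alpha f(g(t))$. The classical chain rule then makes $f\circ g$ differentiable at $t$ with $(f\circ g)'(t)=f'(g(t))g'(t)$. Applying the equivalence once more, $f\circ g$ is $\alpha$-differentiable at $t$, and
\begin{equation}
T_\alpha(f\circ g)(t)=t^{1-\alpha}f'(g(t))g'(t)=t^{1-\alpha}\,g(t)^{\alpha-1}T_\alpha f(g(t))\;t^{\alpha-1}T_\alpha g(t),
\end{equation}
which simplifies to the stated formula because $t^{1-\alpha}t^{\alpha-1}=1$.

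The main obstacle is domain bookkeeping rather than analysis. The equivalence requires the evaluation point to be strictly positive, so $g(t)$ must lie in $(0,\infty)$, not merely be nonzero. If $g(t)<0$, the factor $g(t)^{\alpha-1}$ is not a real number in general, so the statement implicitly presumes $g(t)>0$, as the domain $(0,\infty)$ of $f$ indicates. I would state this restriction explicitly in the proof.

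As an alternative that avoids the equivalence lemma, one can follow the original difference-quotient argument of Abdeljawad. Write $g(t+\epsilon t^{1-\alpha})=g(t)+\delta(\epsilon)$ with $\delta(\epsilon)/\epsilon\to T_\alpha g(t)$. Then express $\delta=\eta\, g(t)^{1-\alpha}$, and split the case where $\delta(\epsilon)=0$ along a sequence by the standard auxiliary-function trick used in the classical chain rule. I would only fall back on this if a purely limit-based proof is preferred; the first route is shorter.
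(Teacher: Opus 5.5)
Your proof is correct, but it takes a different route from the cited source. The paper gives no argument of its own for this theorem; it only cites Abdeljawad. There the chain rule is proved directly from the conformable difference quotient, by splitting it into a quotient for $f$ at $g(t)$ (after rescaling the increment by $g(t)^{\alpha-1}$) and the conformable quotient of $g$ at $t$. That is essentially the fallback you sketch.

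Your main route instead reduces everything to the classical chain rule via the two-sided equivalence ``$\alpha$-differentiable at $t>0$ $\iff$ differentiable at $t$, with $T_\alpha f(t)=t^{1-\alpha}f'(t)$''. The paper's appendix proves only the forward implication, but you need both directions: the converse to get classical differentiability of $f$ and $g$ from the hypotheses, and the forward one to return to $T_\alpha(f\circ g)$. Your substitution $h=\epsilon t^{1-\alpha}$ correctly supplies both at once.

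What your route buys is brevity and robustness. In the direct argument, $g(t+\epsilon t^{1-\alpha})$ may equal $g(t)$ for arbitrarily small $\epsilon\neq0$, so the quotient cannot be split. Your route absorbs that case into the classical chain rule instead of needing the auxiliary-function trick. The direct route's only advantage is that it never leaves the conformable limit definition. On $(0,\infty)$ it gains no generality, since the equivalence holds at every point there.

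Your domain remark is also right: $f\circ g$ and $T_\alpha f(g(t))$ only make sense when $g(t)>0$, not merely $g(t)\neq0$. You could add that continuity of $g$ at $t$ then keeps $g>0$ near $t$. This makes $f\circ g$ defined on a neighborhood of $t$, as the classical chain rule requires.
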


These theorems motivate their use in contexts that aim to extend classical differential analysis while preserving the operational structure.

\subsection{Conformable Exponential Function}

A central function in the theory of differential equations is the exponential. To ensure compatibility with the conformable derivative framework, the so-called \emph{conformable exponential function} is defined as the solution to:
\begin{equation}
T_\alpha y(t) = \lambda y(t), \quad y(0) = 1.
\end{equation}

The solution to this equation, based on the properties of $T_\alpha$, is:
\begin{equation}
y(t):= e_\alpha(\lambda, t) = e^{\frac{\lambda t^\alpha}{\alpha}},
\end{equation}
which reduces to the classical exponential $e^{\lambda t}$ when $\alpha = 1$. This function plays the same role as its classical counterpart in solving homogeneous linear equations and becomes a key component in the fundamental solutions of conformable differential systems.

\subsection{Conformable  Integral}

Following the natural development of the conformable derivative, it is reasonable to define an integral operator that acts as its inverse. This construction was proposed in analogy with the classical integration operator, respecting the new temporal framework introduced by the deformation $t^{1 - \alpha}$.

The conformable  integral of order $\alpha \in (0,1]$, also referred to as the conformable antiderivative, of a continuous function $f$ on the interval $[a,b]$, is defined as:
\begin{equation}
I^\alpha_a f(t) := \int_a^t \tau^{\alpha - 1} f(\tau)\, d\tau, \quad a \leq t \leq b,
\end{equation}
where the improper integral is understood in the Riemann sense and is assumed to exist.

This operator serves as the inverse of the operator $T_\alpha$ in the sense that, if $f$ is continuous and sufficiently regular, then:
\begin{equation}
T_\alpha \left( I^\alpha_a f \right)(t) = f(t), \quad I^\alpha_a  \left(T_\alpha f \right)(t) = f(t)-f(a).
\end{equation}

As discussed in recent works~\cite{Mohammadnezhad22}, this integral is especially useful in constructing explicit solutions to conformable differential equations via direct integration techniques.

An essential tool in the analysis of differential systems is integration by parts, which also admits an adapted version in the context of the conformable calculus. If $f, g \in C^1([a,b])$, then the following identity holds:
\begin{equation}
\int_a^b f(t) T_\alpha g(t)\, t^{\alpha - 1} dt = \left. f(t) g(t) \right|_a^b - \int_a^b g(t) T_\alpha f(t)\, t^{\alpha - 1} dt.
\end{equation}

This property is consistent with the symmetry of the derivative operator and the temporal weighting structure $t^{\alpha - 1}$ underlying the conformable formalism. It plays a key role in theoretical and variational analyses of fractional-order equations.

\subsection{Conformable Laplace Transform}

A fundamental tool for solving fractional differential equations is the Laplace transform. For the conformable derivative, a compatible version has been developed, known as the \emph{conformable Laplace transform}. This work adopts a more direct and operational approach, as suggested in~\cite{Mohammadnezhad22}.

\begin{Definition}[Conformable  Laplace Transform~\cite{Mohammadnezhad22}]
Let $0 < \alpha \le 1$ and $f: [0, \infty) \to \mathbb{R}$ be a real-valued function. Then the conformable Laplace transform of order $\alpha$ is defined as:
\begin{equation}\label{D.9}
\mathcal{L}_\alpha\{f(t)\}(s) := \int_0^\infty e^{-\frac{s t^\alpha}{\alpha}} f(t)\, d\alpha(t),
\end{equation}
where $d\alpha(t) = t^{\alpha-1} dt$.
\end{Definition}

This approach naturally enables analogs of classical properties, such as:
\begin{equation}\label{D.10}
\mathcal{L}_\alpha\{T_\alpha f(t)\}(s) = s \mathcal{L}_\alpha\{f(t)\}(s) - f(0),
\end{equation}
which facilitates the algebraic treatment of fractional differential equations in the frequency domain.
This definition allows the construction of a transform theory that mirrors the classical Laplace framework while adapting to the conformable context. As in the classical case, the conformable Laplace transform preserves important operational rules which are essential in solving differential equations. 

We now present some of the most useful properties of this transform, starting with linearity: 
Let $\mathcal{L}_\alpha$ denote the conformable Laplace transform. If $f(t)$ and $g(t)$ are functions for which $\mathcal{L}_\alpha\{f(t)\}(s)$ and $\mathcal{L}_\alpha\{g(t)\}(s)$ exist, and $a, b \in \mathbb{R}$, then:
\begin{equation}
\mathcal{L}_\alpha\{a f(t) + b g(t)\}(s) = a\, \mathcal{L}_\alpha\{f(t)\}(s) + b\, \mathcal{L}_\alpha\{g(t)\}(s).
\end{equation} 

In addition, the following theorem summarizes further key properties:

\begin{Theorem}
Suppose that $F_\alpha(s) = \mathcal{L}_\alpha[f(t)]$ exists for $\Re s > 0$.
\begin{enumerate}
    \item If $c$ is a constant, then 
    \begin{equation}
    \mathcal{L}_\alpha[c] = \frac{c}{s}.
    \end{equation}
    \item If $q$ is a constant, then
    \begin{equation}\label{eq13}
    \mathcal{L}_\alpha[t^q](s) = \alpha^{q/\alpha} \frac{\Gamma\left(1 + \frac{q}{\alpha}\right)}{s^{1 + q/\alpha}}.
    \end{equation}
\end{enumerate}
\end{Theorem}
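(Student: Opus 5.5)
Both items follow by computing the defining integral \eqref{D.9} directly, after the substitution $u = t^\alpha/\alpha$. This substitution gives $du = t^{\alpha-1}dt = d\alpha(t)$, which is exactly the conformable measure. It maps $[0,\infty)$ bijectively onto $[0,\infty)$ (since $\alpha>0$) and turns the kernel $e^{-st^\alpha/\alpha}$ into the classical kernel $e^{-su}$. In effect, $\mathcal{L}_\alpha\{f(t)\}(s) = \mathcal{L}\{f((\alpha u)^{1/\alpha})\}(s)$, the ordinary Laplace transform of a reparametrized function.

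For item 1, with $f\equiv c$ the integral becomes $c\int_0^\infty e^{-su}\,du$. For $\Re s>0$ this converges, because $|e^{-su}| = e^{-(\Re s)u}$, and it equals $c/s$. This is also consistent with linearity and with \eqref{D.10} applied to a constant, since $T_\alpha c = 0$ gives $0 = s\mathcal{L}_\alpha\{c\} - c$.

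For item 2, write $t = (\alpha u)^{1/\alpha}$, so that $t^q = \alpha^{q/\alpha} u^{q/\alpha}$. Then
\begin{equation*}
\mathcal{L}_\alpha[t^q](s) = \alpha^{q/\alpha}\int_0^\infty e^{-su} u^{q/\alpha}\,du .
\end{equation*}
The remaining integral is the classical Laplace transform of $u^{\beta}$ with $\beta = q/\alpha$. For real $s>0$, the further substitution $v = su$ gives $s^{-(1+\beta)}\int_0^\infty e^{-v} v^{\beta}\,dv = \Gamma(1+\beta)/s^{1+\beta}$. This yields \eqref{eq13}.

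The main obstacle, and the only non-routine point, is the precise range of validity in item 2. The integral converges at $u=0$ only when $\beta > -1$, that is, $q > -\alpha$. So the statement ``$q$ constant'' must be read with this implicit restriction; otherwise the integral diverges, even though $\Gamma(1+q/\alpha)$ may be finite by analytic continuation. I would state this hypothesis explicitly.

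The extension from real $s>0$ to complex $\Re s>0$ also needs care. I would handle it by analytic continuation. Both sides are holomorphic in $s$ on the half-plane $\Re s>0$: the left side because the integrand is dominated by $e^{-(\Re s)u}u^{\beta}$ locally uniformly, and the right side when $s^{1+\beta}$ is taken with the principal branch. Since they agree on the positive real axis, they agree on the whole half-plane by the identity theorem.
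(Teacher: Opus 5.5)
Your proof is correct and follows essentially the same route as the paper, which reduces both integrals to $\int_0^\infty e^{-u}u^{q/\alpha}\,du$ by the single substitution $u = st^\alpha/\alpha$; your two-step version ($u=t^\alpha/\alpha$, then $v=su$) is just a factorization of that change of variables. Your explicit restriction $q>-\alpha$ (i.e.\ $p>-1$ in the paper's later notation) and your identity-theorem extension from real $s>0$ to $\Re s>0$ are correct additions that the paper's own proof leaves implicit.
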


\subsection{Linear Systems with Conformable  Derivatives and Delays}

In various applications—physics, biology, and cosmology—differential models include temporal delays $f(t-T)$ with $T>0$ \cite{hale1993,ruan2006,arino2006,retard}. The Laplace transform of $\theta(t-T)f(t-T)$ yields the factor $e^{-sT}$, commonly used to solve delay differential equations.

Interest in conformable derivatives with delays has grown because they model processes with limited memory.

\begin{Remark}[Remark 3.2 \cite{Mohammadnezhad22}]
Let $\mathcal{L}_\alpha \{y(t)\} = Y_\alpha(s)$ denote the conformable Laplace transform of order $\alpha$ applied to the function $y(t)$. When applying the transform to a delayed term $y(t - T)$, the result is:
\begin{equation} \label{eq:laplace-delay}
\mathcal{L}_\alpha \{ y(t - T) \}(s) = e^{-\frac{s T^\alpha}{\alpha}} Y_\alpha(s) + e^{-\frac{s T^\alpha}{\alpha}} \int_{-T}^0 e^{-s\frac{ t^\alpha}{\alpha}} y(t)\, d\alpha(t).
\end{equation}
\end{Remark}

This expression reveals that the transform of the delayed term consists of two components: a classical multiplicative factor $e^{-\frac{s T^\alpha}{\alpha}} Y_\alpha(s)$, analogous to the standard Laplace case, and an additional term that depends on the function’s values in the interval $t \in [-T, 0)$.
However, as in several previous studies on fractional systems with delay, it is common to assume that the function $y(t)$ satisfies:
\begin{equation}
y(t) = 0, \quad \text{for } t < 0,
\end{equation}
which causes the second integral in~\eqref{eq:laplace-delay} to vanish, thereby simplifying the transform to:
\begin{equation} \label{eq:laplace-delay-simplified}
\mathcal{L}_\alpha \{ y(t - T) \}(s) = e^{-\frac{s T^\alpha}{\alpha}} Y_\alpha(s).
\end{equation}

This assumption is not merely a mathematical convenience—it has practical and physical justification. For instance, in many models, the system is assumed to begin its evolution at $t = 0$, with no prior activity. This hypothesis yields cleaner transformation expressions and enables the analysis to focus on the system's future dynamics, thereby facilitating both theoretical insights and numerical implementation.

Moreover, this approach aligns with the modern perspective on conformable derivatives as operators with limited memory, in contrast to other fractional formulations (such as Caputo’s), in which the function's entire history influences its derivative.

The general form of a linear conformable fractional differential system with time delays is \cite{Mohammadnezhad22}
\begin{equation}\label{eq:sistema}
\begin{aligned}
D^{\alpha_1}_t x_1(t) &= a_{11}x_1(t)+\cdots+a_{1n}x_n(t)
  + b_{11}x_1(t-\tau_{11})+\cdots+b_{1n}x_n(t-\tau_{1n}),\\
D^{\alpha_2}_t x_2(t) &= a_{21}x_1(t)+\cdots+a_{2n}x_n(t)
  + b_{21}x_1(t-\tau_{21})+\cdots+b_{2n}x_n(t-\tau_{2n}),\\
&\ \vdots\\
D^{\alpha_n}_t x_n(t) &= a_{n1}x_1(t)+\cdots+a_{nn}x_n(t)
  + b_{n1}x_1(t-\tau_{n1})+\cdots+b_{nn}x_n(t-\tau_{nn}),
\end{aligned}
\end{equation}
with initial functions
\begin{equation}
x_i(t)=\phi_i(t),\qquad -\tau_{\max}\le t\le 0,\quad i=1,\dots,n,
\end{equation}
where $\tau_{\max}=\max_{i,j}\tau_{ij}$, $0<\alpha_i\le 1$, and
\begin{equation}
A=(a_{ij})_{i,j=1}^n,\qquad B=(b_{ij})_{i,j=1}^n\in\mathbb{R}^{n\times n}.
\end{equation}
The state vector is $x(t)=(x_1(t),\dots,x_n(t))^\top$ and each $\tau_{ij}>0$ denotes the delay affecting the coupling from $x_j$ to the $i$-th equation. 

\begin{Definition} The system described by equation~\eqref{eq:sistema} is classified as follows:
\begin{itemize}
    \item[(i)] The system is said to be \textbf{stable} if, for any initial condition $x_0 \in \mathbb{R}^n$, there exists a constant $\varepsilon > 0$ such that
    \begin{equation}
    \|x(t)\| \leq \varepsilon, \quad \forall t \geq 0.
    \end{equation}
    \item[(ii)] The system is said to be \textbf{asymptotically stable} if it is stable and additionally satisfies
    \begin{equation}
    \lim_{t \to \infty} \|x(t)\| = 0.
    \end{equation}
\end{itemize}
\end{Definition}
This formulation translates classical stability concepts into the framework of conformable derivatives, preserving the notion of bounded behavior over time and the asymptotic decay of solutions as $t \to \infty$.

Applying the fractional Laplace transform to system \eqref{eq:sistema} and using the fractional final-value theorem leads to the following stability criterion.

\begin{Theorem}[Theorem 3.3 \cite{Mohammadnezhad22}]
Let system \eqref{eq:sistema} be the linear conformable-delay system considered above. Define the characteristic matrix $M(s)\in\mathbb{C}^{n\times n}$ with entries
\begin{equation}
M_{ij}(s)=
\begin{cases}
s-a_{ii}-b_{ii}e^{-s\beta_{ii}\,\frac{\tau_{ii}^{\alpha_i}}{\alpha_i}}, & i=j,\\[4pt]
-\,a_{ij}-b_{ij}e^{-s\beta_{ij}\,\frac{\tau_{ij}^{\alpha_i}}{\alpha_i}}, & i\neq j,
\end{cases}
\end{equation}
where $\alpha_i\in(0,1]$, $a_{ij},b_{ij}\in\mathbb{R}$, $\beta_{ij}\in\mathbb{R}$, $s\in \mathbb{C}$, and $\tau>0$ is the delay. If every root $s$ of
\begin{equation}
\det\big(M(s)\big)=0
\end{equation}
has negative real part, then system \eqref{eq:sistema} is asymptotically stable.
\end{Theorem}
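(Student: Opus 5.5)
The plan follows the route the text indicates just before the statement: take the conformable Laplace transform of system \eqref{eq:sistema}, reduce it to an algebraic linear system in the transformed unknowns, and conclude with a final-value argument. Write $X_i(s)=\mathcal{L}_{\alpha_i}\{x_i(t)\}(s)$. Apply $\mathcal{L}_{\alpha_i}$ to the $i$-th equation and use \eqref{D.10} on the left-hand side, which gives $sX_i(s)-\phi_i(0)$.

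Each delayed term $x_j(t-\tau_{ij})$ is handled with Remark~3.2 in the form \eqref{eq:laplace-delay}. This produces a multiplicative exponential factor, which is written as $e^{-s\beta_{ij}\tau_{ij}^{\alpha_i}/\alpha_i}$ with $\beta_{ij}$ absorbing any mismatch between the orders $\alpha_i$ and $\alpha_j$. It also produces a history integral over $[-\tau_{ij},0]$ involving $\phi_j$.

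Collecting terms gives
\begin{equation*}
M(s)X(s)=x(0)+H(s),
\end{equation*}
where $H(s)$ gathers the initial-function integrals. Each component of $H(s)$ is an entire function of $s$, being a finite integral of $\phi_j$ against an exponential. Hence
\begin{equation*}
X(s)=\frac{\operatorname{adj}M(s)\,\big(x(0)+H(s)\big)}{\det M(s)},
\end{equation*}
and $X(s)$ is meromorphic, with poles only at zeros of $\det M(s)$.

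Next I would invoke the conformable final-value theorem, $\lim_{t\to\infty}x_i(t)=\lim_{s\to0}sX_i(s)$. It comes from integrating $T_{\alpha_i}x_i$ against $e^{-st^{\alpha_i}/\alpha_i}\,d\alpha(t)$, letting $s\to0$, and using $I^\alpha_0(T_\alpha f)=f(t)-f(0)$. The hypothesis places every root in $\Re s<0$, so $X$ is analytic on $\Re s\ge0$, including at $s=0$. Then $sX(s)\to0$ as $s\to0$, which gives $\lim_{t\to\infty}x(t)=0$.

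Boundedness of $x(t)$ is also needed for stability in the sense of the Definition, not only the limit. To get it I would use the substitution $u=t^{\alpha_i}/\alpha_i$. In the new variable the conformable transform becomes an ordinary Laplace transform, and the system becomes a classical retarded delay system with delays $\tau^{\alpha}/\alpha$. Bromwich inversion along a line $\Re s=-\delta$ then gives exponential decay in $u$, hence boundedness in $t$.

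The main obstacle is justifying the final-value step. The final-value theorem requires knowing a priori that the limit exists, or that $sX(s)$ is analytic on the closed right half-plane. This in turn needs a uniform estimate on $1/\det M(s)$ for large $|s|$. For retarded-type determinants the leading term is $s^n$, so I would first try to show $|\det M(s)|\ge c|s|^n$ on $\Re s\ge-\delta$ for large $|s|$. That leaves only finitely many roots in any right half-plane and lets the contour be shifted.

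A second difficulty is that the different orders $\alpha_i$ across equations make the change of variable inconsistent between components. If that cannot be resolved cleanly, I would restrict to a common $\alpha$ or interpret the $\beta_{ij}$ factors as the device that reconciles the time scales, as the statement implicitly does.
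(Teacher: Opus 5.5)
Your route is the one the paper points to. The paper gives no proof, only the citation and a sentence about the conformable Laplace transform plus a final-value theorem. However, the step that produces $M(s)X(s)=x(0)+H(s)$ fails for $\alpha<1$, and your own substitution shows why.

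With $\xi=t^{\alpha}/\alpha$ you do get $T_\alpha x=dx/d\xi$, and $\mathcal{L}_\alpha$ becomes the ordinary Laplace transform in $\xi$. But the delayed argument $t-\tau$ becomes $\xi-r(\xi)$ with $r(\xi)=\big(t^{\alpha}-(t-\tau)^{\alpha}\big)/\alpha$. This equals $\tau^{\alpha}/\alpha$ only at $t=\tau$ and decays like $\tau t^{\alpha-1}$. So the transformed system is \emph{not} a constant-delay retarded system.

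For the same reason, the shift rule \eqref{eq:laplace-delay} is exact only for $\alpha=1$. In general $\mathcal{L}_\alpha\{x(t-\tau)\}(s)=\int_{-\tau}^{\infty}e^{-s(u+\tau)^{\alpha}/\alpha}(u+\tau)^{\alpha-1}x(u)\,du$, and $(u+\tau)^{\alpha}\neq u^{\alpha}+\tau^{\alpha}$. Concretely, for $\alpha=\tfrac12$ and the causal ramp $x(t)=t\,\theta(t)$ a direct computation gives
\[
\mathcal{L}_{1/2}\{x(t-\tau)\}(s)=e^{-2\sqrt{\tau}\,s}\Big(\mathcal{L}_{1/2}\{x\}(s)+\frac{\sqrt{\tau}}{s^{2}}\Big),\qquad \mathcal{L}_{1/2}\{x\}(s)=\frac{1}{2s^{3}}.
\]
This is not $e^{-s\beta\tau^{\alpha}/\alpha}\mathcal{L}_{1/2}\{x\}(s)$ for any constant $\beta$. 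The $\beta_{ij}$ are never defined, and no constant can absorb this discrepancy.

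The mixed-order issue is also beyond the reach of $\beta_{ij}$, which only sits in the delay exponent. Applying $\mathcal{L}_{\alpha_i}$ to the undelayed coupling $a_{ij}x_j$ gives $\mathcal{L}_{\alpha_i}\{x_j\}$. That is neither $X_j=\mathcal{L}_{\alpha_j}\{x_j\}$ nor a rescaling of it in $s$. So the $n$ equations involve $n^{2}$ unknown transforms and never close into a matrix equation.

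A lower bound on $|\det M(s)|$ or a more careful final-value argument cannot repair this. Such a bound fails anyway once $\beta_{ij}<0$ is allowed, since $\det M$ then contains exponentials growing in $\Re s>0$. More fundamentally, since $r(\xi)\to0$, the long-time behaviour of the true system is heuristically that of $x'=(A+B)x$, not that of the quasi-polynomial with delay $\tau^{\alpha}/\alpha$. The two can disagree because delay can stabilize.

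For example, take $\alpha_1=\alpha_2=\alpha<1$ and the system $T_\alpha x_1=x_2$, $T_\alpha x_2=-x_1+\epsilon x_2(t-\tau)$, with $\tau^{\alpha}/\alpha=\pi$ and $\beta_{22}=1$. Then $\det M(s)=s^{2}+1-\epsilon s\,e^{-\pi s}$. For small $\epsilon>0$ its roots are $\pm i-\epsilon/2+O(\epsilon^{2})$, and all remaining roots lie in $\Re s<-1$. Yet in the $\xi$ variable the system approaches $x_1''-\epsilon x_1'+x_1=0$, which is unstable.

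So for $\alpha<1$ the criterion is not the quantity that controls stability. What your substitution legitimately yields is a stability question for the time-varying-delay system $x'(\xi)=Ax(\xi)+Bx(\xi-r(\xi))$. Only when $\alpha_i\equiv1$ and $\beta_{ij}=1$ does your outline (meromorphic $X$, retarded-type bound, contour shift) reduce to the classical proof.
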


This formulation proves especially useful for solving fractional differential equations with delays using transform-based techniques, as shown in recent works~\cite{Mohammadnezhad22, retard}.

\section{Why the conformable Laplace transform is preferable to the classical Laplace transform?}

In this section, we explain why the \emph{classical} Laplace transform applied to conformable equations produces non‑algebraic transform relations, while the \emph{conformable} Laplace transform preserves the algebraic structure essential for practical solution.

\subsection{Model problem}
Consider the scalar conformable equation for a differentiable function $f(t)$: 
\begin{equation}\label{eq:scalar}
T_\alpha f(t)+a f(t)=b,\qquad f(0)=f_0,\qquad 0<\alpha\le1,
\end{equation}
with $T_\alpha f(t)$ given by
\begin{equation}\label{eq:conformable_def}
T_\alpha f(t)=t^{1-\alpha}f'(t).
\end{equation}
Equation \eqref{eq:scalar} is thus equivalent to
\begin{equation}\label{eq:scalar_explicit}
t^{1-\alpha}f'(t)+a f(t)=b.
\end{equation}

\subsection{Classical Laplace transform: the obstruction}
Applying $\mathcal{L}$ to \eqref{eq:scalar_explicit} gives
\begin{equation}\label{eq:classical1}
\mathcal{L}\{t^{1-\alpha}f'(t)\}+aF(s)=\frac{b}{s},\qquad F(s)=\mathcal{L}\{f(t)\},
\end{equation}
with the problematic term
\begin{equation}\label{eq:G}
G(s):=\mathcal{L}\{t^{1-\alpha}f'(t)\}.
\end{equation}

Two observations explain why $G(s)$ is non‑algebraic:
\begin{itemize}
  \item For integer $n$, multiplication by $t^n$ corresponds to differentiation in $s$:
  \begin{equation}\label{eq:integer_mult}
  \mathcal{L}\{t^n f(t)\}=(-1)^n\frac{d^n}{ds^n}F(s).
  \end{equation}
  For noninteger $\mu=1-\alpha$, however, we have a fractional $s$‑derivative
  \begin{equation}\label{eq:fractional_mult}
  \mathcal{L}\{t^\mu f(t)\}\sim D_s^\mu F(s).
  \end{equation}
  
  \item Multiplying \eqref{eq:scalar_explicit} by $t^{\alpha-1}$ yields
  \begin{equation}\label{eq:var_coeff}
  f'(t)+a\,t^{\alpha-1}f(t)=b\,t^{\alpha-1},
  \end{equation}
  whose transform is
  \begin{equation}\label{eq:classical2}
  sF(s)-f_0 + a\,\mathcal{L}\{t^{\alpha-1}f(t)\}=b\,\mathcal{L}\{t^{\alpha-1}\},
  \end{equation}
  again involving fractional $s$‑derivatives.
\end{itemize}

Thus, the classical transform introduces nonlocal operators in $s$, undermining the algebraic reduction central to Laplace methods.

\subsection{Conformable Laplace transform: resolution}

\begin{Proposition}
Let $0<\alpha\le 1$ and let $T_\alpha$ denote the conformable derivative.   
Consider the linear initial value problem
\begin{equation}\label{eq:conformable_ivp}
T_\alpha f(t)+a\,f(t)=b,\qquad f(0)=f_0,
\end{equation}
with constants $a,b,f_0\in\mathbb{R}$. Then the unique solution is
\begin{equation}\label{eq:conformable_solution}
f(t)=\frac{b}{a}+\Big(f_0-\frac{b}{a}\Big)\exp\!\Big(-\tfrac{a\,t^\alpha}{\alpha}\Big).
\end{equation}
\end{Proposition}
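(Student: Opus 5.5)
Since this proposition sits in the subsection on the conformable Laplace transform, I will derive the formula by the transform method, using \eqref{D.10}, $\mathcal{L}_\alpha[c]=c/s$, and the conformable exponential. I will then settle uniqueness separately by a direct ODE argument. Throughout I assume $a\neq 0$, as the formula implicitly requires. For $a=0$ the same computation gives $f(t)=f_0+b\,t^\alpha/\alpha$, which is the limit of \eqref{eq:conformable_solution} as $a\to0$.

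\textbf{Existence via $\mathcal{L}_\alpha$.} Write $F_\alpha(s)=\mathcal{L}_\alpha\{f\}(s)$. Applying $\mathcal{L}_\alpha$ to \eqref{eq:conformable_ivp} and using linearity together with \eqref{D.10} gives
\begin{equation*}
sF_\alpha(s)-f_0+aF_\alpha(s)=\frac{b}{s}.
\end{equation*}
Hence
\begin{equation*}
F_\alpha(s)=\frac{f_0}{s+a}+\frac{b}{s(s+a)}=\frac{b/a}{s}+\Big(f_0-\frac{b}{a}\Big)\frac{1}{s+a},
\end{equation*}
by partial fractions. To invert, I compute $\mathcal{L}_\alpha\{e^{-at^\alpha/\alpha}\}$ with the substitution $u=t^\alpha/\alpha$, so that $du=t^{\alpha-1}dt$:
\begin{equation*}
\int_0^\infty e^{-(s+a)t^\alpha/\alpha}\,t^{\alpha-1}dt=\int_0^\infty e^{-(s+a)u}\,du=\frac{1}{s+a},\qquad \Re s>-a.
\end{equation*}
The same substitution shows that $\mathcal{L}_\alpha\{g\}(s)=\mathcal{L}\{g(\,(\alpha u)^{1/\alpha}\,)\}(s)$. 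This identification makes inversion legitimate, because injectivity of $\mathcal{L}_\alpha$ follows from Lerch's theorem for the classical transform. Reading off term by term yields \eqref{eq:conformable_solution}.

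\textbf{Verification and uniqueness.} The transform step presupposes that a solution exists and is transformable, so I will also check the formula directly and prove uniqueness without reference to the transform. For the check, \eqref{conformable} gives $T_\alpha\exp(-at^\alpha/\alpha)=t^{1-\alpha}\cdot(-a t^{\alpha-1})\exp(-at^\alpha/\alpha)=-a\exp(-at^\alpha/\alpha)$, which is the defining property of $e_\alpha(-a,t)$. Therefore $T_\alpha f+af=b$, and $f(0)=f_0$.

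For uniqueness, suppose $f_1,f_2$ are two differentiable solutions and let $g=f_1-f_2$. Then $g$ solves $T_\alpha g=-ag$ with $g(0)=0$. Set $h(t)=e^{at^\alpha/\alpha}g(t)$. By the product rule for $T_\alpha$ we get $T_\alpha h=0$ for $t>0$, which by \eqref{conformable} means $h'(t)=0$ on $(0,\infty)$. So $h$ is constant there, and continuity at $0$ forces $h\equiv h(0)=0$, hence $g\equiv 0$.

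\textbf{Main obstacle.} The delicate point is the singular behaviour at $t=0$ when $\alpha<1$: $T_\alpha$ is defined only for $t>0$, and $t^{\alpha-1}$ blows up at the origin. Two things need care here. First, the initial condition must be interpreted as $\lim_{t\to0^+}f(t)=f_0$, with the solution class taken to be functions continuous on $[0,\infty)$ and differentiable on $(0,\infty)$. Second, justifying \eqref{D.10} requires the boundary term $e^{-st^\alpha/\alpha}f(t)\big|_0^\infty$ to equal $-f_0$, which needs exponential-type growth of $f$ in the variable $t^\alpha/\alpha$. I will handle this by performing the substitution $u=t^\alpha/\alpha$ once at the outset. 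This reduces \eqref{eq:conformable_ivp} to the classical problem $\tilde f'(u)+a\tilde f(u)=b$ with $\tilde f(0)=f_0$, whose solution and uniqueness are standard. Mapping back through $u=t^\alpha/\alpha$ then gives \eqref{eq:conformable_solution}.
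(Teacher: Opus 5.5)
Your proposal is correct, and its existence part follows the paper's proof step for step. Both apply $\mathcal{L}_\alpha$ using $\mathcal{L}_\alpha\{T_\alpha f\}=s\mathcal{L}_\alpha\{f\}-f(0)$ and $\mathcal{L}_\alpha[c]=c/s$, solve $(s+a)F_\alpha=f_0+b/s$, split by partial fractions, and invert with $\mathcal{L}_\alpha^{-1}\{1/s\}=1$ and $\mathcal{L}_\alpha^{-1}\{1/(s+a)\}=e^{-at^\alpha/\alpha}$. Where you differ is in how the argument is closed. The paper settles uniqueness by saying only that each transform step is ``reversible under the usual hypotheses.'' That gives uniqueness only among $\mathcal{L}_\alpha$-transformable functions of suitable growth, and it tacitly assumes a transformable solution exists. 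You instead verify the formula directly via $T_\alpha e^{-at^\alpha/\alpha}=-a\,e^{-at^\alpha/\alpha}$. You then prove uniqueness with the integrating factor $h=e^{at^\alpha/\alpha}g$: $T_\alpha h=0$ forces $h'=0$ on $(0,\infty)$, and continuity at $0$ gives $h\equiv 0$. This holds among all functions continuous on $[0,\infty)$ and differentiable on $(0,\infty)$, with no growth assumption. Your justification of inversion, via $\mathcal{L}_\alpha\{g\}(s)=\mathcal{L}\{g((\alpha u)^{1/\alpha})\}(s)$ and Lerch's theorem, is also more explicit than the paper's appeal to ``standard inverse pairs.'' Note that the substitution $u=t^\alpha/\alpha$ in your last paragraph is by itself a complete proof of both existence and uniqueness, since it reduces the problem to $\tilde f'+a\tilde f=b$, $\tilde f(0)=f_0$. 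It also explains why the paper's transform route works: $\mathcal{L}_\alpha$ is just the classical Laplace transform in that variable. So your write-up would be cleaner if the substitution were the main argument and the transform computation the illustration the paper intends, rather than presenting both as necessary steps.
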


\begin{proof}
We provide a direct argument based on the conformable Laplace transform, highlighting its algebraic simplicity.

\noindent\textbf{Step 1. Transform the equation.}  
Applying $\mathcal{L}_\alpha$ to \eqref{eq:conformable_ivp}, we use the identity
\begin{equation}\label{comformable_Laplace}
\mathcal{L}_\alpha\{T_\alpha f(t)\}(s)=s\,\mathcal{L}_\alpha\{f(t)\}(s)-f(0).
\end{equation}
Thus, the transformed equation becomes
\begin{equation}
s\,\mathcal{L}_\alpha\{f\}(s)-f_0+a\,\mathcal{L}_\alpha\{f\}(s)=\frac{b}{s}.
\end{equation}

\noindent\textbf{Step 2. Solve algebraically in the transform domain.}  
Collecting terms gives
\begin{equation}
(s+a)\,\mathcal{L}_\alpha\{f\}(s)=f_0+\frac{b}{s}.
\end{equation}
Hence
\begin{equation}
\mathcal{L}_\alpha\{f\}(s)=\tfrac{f_0}{s+a}+\tfrac{b}{s(s+a)}.
\end{equation}
Using partial fractions,
\begin{equation}
\frac{b}{s(s+a)}=\tfrac{b}{a}\Big(\tfrac{1}{s}-\tfrac{1}{s+a}\Big),
\end{equation}
so that
\begin{equation}
\mathcal{L}_\alpha\{f\}(s)=\frac{b}{a}\cdot\frac{1}{s}+\Big(f_0-\frac{b}{a}\Big)\frac{1}{s+a}.
\end{equation}

\noindent\textbf{Step 3. Invert term by term.}  
The standard inverse pairs for $\mathcal{L}_\alpha$ are
\begin{equation}
\mathcal{L}_\alpha^{-1}\!\left\{\frac{1}{s}\right\}(t)=1,\qquad
\mathcal{L}_\alpha^{-1}\!\left\{\frac{1}{s+a}\right\}(t)=e^{-a t^\alpha/\alpha}.
\end{equation}
Applying these gives
\begin{equation}
f(t)=\tfrac{b}{a}+\Big(f_0-\tfrac{b}{a}\Big)e^{-a t^\alpha/\alpha},
\end{equation}
which matches \eqref{eq:conformable_solution}.

\noindent\textbf{Step 4. Uniqueness and stability.}  
Since each transform step is reversible under the usual hypotheses, the solution is unique. Moreover, for $a>0$ the exponential term decays as $t\to\infty$, so
\begin{equation}
\lim_{t\to\infty} f(t)=\frac{b}{a},
\end{equation}
showing convergence to a stable equilibrium under constant forcing.
\end{proof}

\begin{Remark}
The conformable Laplace transform is particularly effective because it maps the conformable derivative to the algebraic form \eqref{eq:scalar_explicit}, preserving the classical workflow of transform–solve–invert. By contrast, applying the classical Laplace transform directly to the equivalent form \eqref{eq:scalar_explicit} introduces noninteger powers of $t$, leading to fractional derivatives in the transform variable. This complicates the algebra and obscures the structure of the solution, whereas the conformable transform retains a purely algebraic representation.
\end{Remark}

\subsection{Solution of a Conformable  Differential Equation Using the Conformable Laplace Transform}

We solve the Conformable  PDE
\begin{equation}\label{eq:PDE}
T_{\alpha} u(x,t)=D_x^2 u(x,t)-2u(x,t),\qquad 0<\alpha\le1,
\end{equation}
with initial condition
\begin{equation}\label{eq:IC}
u(x,0)=e^x,
\end{equation}
where $D_x^2$ is the spatial derivative of second order and $T_{\alpha}$ is now promoted to the partial in time conformable derivative.
The conformable Laplace transform $\mathcal{L}_\alpha$ is chosen so that the conformable derivative is transformed to the algebraic form \eqref{eq:scalar_explicit}. 

\textbf{Step 1: Transform in $t$.} Apply $\mathcal{L}_\alpha$ to \eqref{eq:PDE} (treating $x$ as a parameter):
\begin{equation}
s\,\mathcal{L}_\alpha\{u(x,t)\}-u(x,0)=\mathcal{L}_\alpha\{D_x^2 u(x,t)\}-2\mathcal{L}_\alpha\{u(x,t)\}.
\end{equation}
With $u(x,0)=e^x$ this yields
\begin{equation}\label{eq:LaplaceSol}
\mathcal{L}_\alpha\{u(x,t)\}=\frac{e^x}{s+2}+\frac{1}{s+2}\,\mathcal{L}_\alpha\{D_x^2 u(x,t)\}.
\end{equation}

\textbf{Step 2: Inverse transform and ansatz.} Using the conformable pair
\begin{equation}
\mathcal{L}_\alpha\{e^{-a t^\alpha/\alpha}\}=\frac{1}{s+a},
\end{equation}
we have
\begin{equation}
\mathcal{L}_\alpha^{-1}\!\left\{\frac{e^x}{s+2}\right\}=e^x e^{-2t^\alpha/\alpha}.
\end{equation}
So \eqref{eq:LaplaceSol} becomes
\begin{equation}
u(x,t)=e^x e^{-2t^\alpha/\alpha}+\mathcal{L}_\alpha^{-1}\!\left\{\frac{1}{s+2}\,\mathcal{L}_\alpha\{D_x^2 u\}\right\}.
\end{equation}
Motivated by the initial data, set $u(x,t)=e^x v(t)$. Since $D_x^2 e^x=e^x$, substitution into \eqref{eq:PDE} gives the scalar conformable ODE
\begin{equation}\label{eq:scalarODE}
T_{\alpha} v(t)=-v(t),\qquad v(0)=1.
\end{equation}

\textbf{Step 3: Solve the scalar ODE.} Using $T_{\alpha} v(t)=t^{1-\alpha}v'(t)$,
\begin{equation}
t^{1-\alpha}v'(t)=-v(t)\quad\Longrightarrow\quad \frac{v'(t)}{v(t)}=-t^{\alpha-1}.
\end{equation}
Integrating from $0$ to $t$ yields
\begin{equation}
\ln v(t)=-\frac{t^\alpha}{\alpha},\qquad v(t)=\exp\!\Big(-\frac{t^\alpha}{\alpha}\Big).
\end{equation}
Hence
\begin{equation}\label{eq:final}
u(x,t)=e^x\exp\!\Big(-\frac{t^\alpha}{\alpha}\Big).
\end{equation}

\textbf{Step 4: Series construction and induction.} Expand the factor $e^{t^\alpha/\alpha}$ to obtain the series representation
\begin{equation}
u(x,t)=\sum_{k=0}^\infty u_k(x,t),
\end{equation}
where
\begin{equation}\label{eq:uk_def}
u_k(x,t):=e^x e^{-2t^\alpha/\alpha}\,\frac{t^{k\alpha}}{\alpha^k k!},\qquad k\ge0.
\end{equation}
A direct computation gives the recurrence
\begin{equation}
u_{k+1}(x,t)=\frac{t^\alpha}{\alpha(k+1)}\,u_k(x,t),
\end{equation}
and induction on $k$ verifies \eqref{eq:uk_def} for all $k$. Summing the series termwise yields \eqref{eq:final}.

\begin{Proposition}
Let $0<\alpha\le1$. The initial value problem \eqref{eq:PDE}--\eqref{eq:IC} has the unique solution
\begin{equation}
u(x,t)=e^x\exp\!\Big(-\dfrac{t^\alpha}{\alpha}\Big).
\end{equation}
Equivalently, the solution separates as $u(x,t)=e^x v(t)$ with $v(t)=\exp(-t^\alpha/\alpha)$. The series $\sum_{k\ge0}u_k(x,t)$ defined in \eqref{eq:uk_def} converges (formally) to this closed form and satisfies the recurrence
\begin{equation}
u_{k+1}=\tfrac{t^\alpha}{\alpha(k+1)}\,u_k,\qquad u_0=e^x e^{-2t^\alpha/\alpha}.
\end{equation}
\end{Proposition}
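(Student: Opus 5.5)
The plan has four parts: verify that the stated function solves the problem, prove uniqueness directly rather than through transform reversibility, and then treat the series claims by explicit computation.

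\emph{Existence.} I substitute $u(x,t)=e^x v(t)$ with $v(t)=\exp(-t^\alpha/\alpha)$ into \eqref{eq:PDE}. For $t>0$, $v$ is classically differentiable, so by \eqref{conformable} we have $T_\alpha v(t)=t^{1-\alpha}v'(t)$. Since $v'(t)=-t^{\alpha-1}v(t)$, this gives $T_\alpha v=-v$, which is just the conformable exponential with $\lambda=-1$. Using $D_x^2e^x=e^x$, the right-hand side of \eqref{eq:PDE} becomes $e^x v-2e^x v=-e^x v$, which matches the left-hand side. Finally $v(0)=1$ gives \eqref{eq:IC}. This settles existence and the separated form.

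\emph{Uniqueness.} I will not rely on the ``reversible transform'' heuristic; instead I argue on the difference of two solutions. The statement implicitly needs a class of admissible solutions, and I will restrict to solutions of the form $u(x,t)=e^x w(t)$ with $w$ continuous on $[0,\infty)$ and $C^1$ on $(0,\infty)$. Without such a restriction the heat-type equation admits Tikhonov-type non-uniqueness, so I will state the class explicitly.

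Within this class, \eqref{eq:PDE} reduces exactly to the scalar problem \eqref{eq:scalarODE}, namely $t^{1-\alpha}w'=-w$ with $w(0)=1$. I then set $z(t)=w(t)e^{t^\alpha/\alpha}$. By the product rule of the first Theorem,
\[
T_\alpha z=e^{t^\alpha/\alpha}\,(T_\alpha w+w)=0,
\]
so $z'=0$ on $(0,\infty)$. Continuity at $0$ then forces $z\equiv z(0)=1$, hence $w=v$.

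As an alternative route, the Proposition on \eqref{eq:conformable_ivp} with $a=1$, $b=0$ gives the same conclusion. The uniqueness step is the only real obstacle, and that obstacle is formulating the correct solution class rather than any computation.

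\emph{Series.} With $u_k$ defined as in \eqref{eq:uk_def}, the recurrence is immediate, since
\[
\frac{u_{k+1}}{u_k}=\frac{t^{(k+1)\alpha}\,\alpha^k k!}{t^{k\alpha}\,\alpha^{k+1}(k+1)!}=\frac{t^\alpha}{\alpha(k+1)},
\]
and induction from $u_0=e^xe^{-2t^\alpha/\alpha}$ recovers \eqref{eq:uk_def}. Summing, the exponential series gives
\[
\sum_k u_k=e^xe^{-2t^\alpha/\alpha}\sum_k\frac{(t^\alpha/\alpha)^k}{k!}=e^xe^{-2t^\alpha/\alpha}e^{t^\alpha/\alpha}=e^xe^{-t^\alpha/\alpha}.
\]
This convergence is in fact absolute and locally uniform in $(x,t)$, not merely formal, because the series is dominated by the exponential series.
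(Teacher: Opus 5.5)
Your existence check and your treatment of the series are correct, and in substance they match the paper. The paper obtains $v$ from the separation ansatz and integrates $v'/v=-t^{\alpha-1}$, which tacitly assumes $v\neq0$; your integrating factor $z=we^{t^\alpha/\alpha}$ avoids that. Your remark that $\sum_k u_k$ converges absolutely and locally uniformly is also right, since the series is $e^xe^{-2t^\alpha/\alpha}$ times the exponential series in $t^\alpha/\alpha$. This is sharper than the paper's ``formally''.

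\textbf{The gap is uniqueness.} Restricting the admissible class to $u=e^xw(t)$ builds in the separated form, which the proposition presents as a conclusion. What you prove is therefore only uniqueness for the scalar problem $T_\alpha w=-w$, $w(0)=1$. It says nothing about other solutions of \eqref{eq:PDE}--\eqref{eq:IC}.

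Your Tikhonov remark is correct, so the unqualified ``unique'' cannot be proved as stated. That remark is also exactly what defeats the paper's own justification. Applying $\mathcal{L}_\alpha$ in $t$ gives $U_{xx}-(s+2)U=-e^x$. Its homogeneous solutions $A(s)e^{\sqrt{s+2}\,x}+B(s)e^{-\sqrt{s+2}\,x}$ are not excluded by invertibility of $\mathcal{L}_\alpha$, and the Tikhonov-type solution is of exponential order in $t$ for each fixed $x$.

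\textbf{What is missing is a growth class in $x$ rather than a shape class.} One route:
\begin{itemize}
\item Put $\tau=t^\alpha/\alpha$. This is a homeomorphism of $[0,\infty)$ and a diffeomorphism of $(0,\infty)$, and under it $T_\alpha=\partial_\tau$.
\item For two solutions $u_1,u_2$, set $w=e^{2\tau}(u_1-u_2)$. Then $w_\tau=w_{xx}$ and $w(x,0)=0$.
\item Assume $|u_i(x,t)|\le Ce^{cx^2}$ on every strip $\mathbb{R}\times[0,t_1]$; this class contains $e^xe^{-t^\alpha/\alpha}$.
\item Tikhonov's uniqueness theorem for the heat equation then gives $w\equiv0$.
\end{itemize}
The same substitution also gives existence immediately, since $e^{x+\tau}$ solves the heat equation with data $e^x$.

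Alternatively, you can repair the paper's transform argument. Assume a bound $|u|\le Ce^{c|x|+\gamma t^\alpha/\alpha}$, plus enough regularity to commute $\partial_x^2$ with $\mathcal{L}_\alpha$. This forces $A(s)=B(s)=0$ for all large real $s$, and Lerch's theorem then determines $u$.

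Finally, drop your ``alternative route'' through the proposition for \eqref{eq:conformable_ivp}. Its uniqueness rests on the same reversibility heuristic you set out to avoid, so it adds nothing beyond your integrating-factor argument.
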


\begin{proof}
Apply $\mathcal{L}_\alpha$ to \eqref{eq:PDE} and use $\mathcal{L}_\alpha\{T_{\alpha} u\}=s\mathcal{L}_\alpha\{u\}-u(x,0)$ to obtain \eqref{eq:LaplaceSol}. The separation ansatz reduces the PDE to the scalar ODE \eqref{eq:scalarODE}, whose solution is $v(t)=\exp(-t^\alpha/\alpha)$ by direct integration. Substituting back gives \eqref{eq:final}. The series representation follows by expanding $e^{t^\alpha/\alpha}$ and verifying the recurrence and term formula \eqref{eq:uk_def} by induction; summation reproduces the closed form. Uniqueness follows from linearity and the invertibility of $\mathcal{L}_\alpha$ on the transformable class.
\end{proof}

\textbf{Interpretation and limiting cases}
\begin{itemize}
  \item \textbf{Spatial profile.} The solution preserves the spatial shape $e^x$ for all $t$.
  \item \textbf{Time behavior.} The amplitude decays as a fractional exponential $e^{-t^\alpha/\alpha}$.
  \item \textbf{Classical limit.} For $\alpha=1$ the decay reduces to $e^{-t}$, recovering the classical result.
  \item \textbf{Short‑time expansion.} For $0<\alpha<1$ the decay is slower near $t=0$; the Taylor expansion
  \begin{equation}
  e^{-t^\alpha/\alpha}=1-\frac{t^\alpha}{\alpha}+\frac{t^{2\alpha}}{2\alpha^2}+\cdots
  \end{equation}
  matches the iterative terms $u_0,u_1,\dots$ obtained from the series construction.
\end{itemize}

\section{Differential Equation with First Order Derivative and Time Delay}

Consider the nonhomogeneous delay differential equation \cite{retard}:
\begin{equation}\label{eq:first_order_delay}
y'(t) + a\, y(t - T) = b, \qquad y(0)=y_0,\quad y(t)=0 \;\text{for } t<0,
\end{equation}
where $T>0$ and $a, b\in\mathbb{R}$ are constants. Applying the Laplace transform to \eqref{eq:first_order_delay} gives
\begin{equation}\label{eq:laplace_first}
s Y(s) - y(0) + a\, Y(s) e^{-sT} = \frac{b}{s},
\end{equation}
where $Y(s)=\mathcal{L}\{y(t)\}$. Solving for $Y(s)$ yields
\begin{equation}\label{eq:Y_first}
Y(s) = \frac{y(0)}{s\left(1+\tfrac{a e^{-sT}}{s}\right)} + \tfrac{b}{s^2\left(1+\tfrac{a e^{-sT}}{s}\right)}.
\end{equation}

\begin{Remark}\label{rem:convergence_first}
Let $c>0$. Then $0<\left|\tfrac{a e^{-sT}}{c}\right|<1$ holds for all $c\ge |a|>0$, since $|a/c|\le 1$ and $\left|e^{-sT}\right|<1$ for $\Re s>0$.
\end{Remark}

Expanding the denominator in \eqref{eq:Y_first} as a geometric series under condition $0<\left|\tfrac{a e^{-sT}}{s}\right|<1$, we obtain
\begin{equation}\label{eq:Y_series_first}
Y(s) = y(0)\sum_{j=0}^\infty (-a)^j \frac{e^{-sjT}}{s^{j+1}}
+ b\sum_{j=0}^\infty (-a)^j \frac{e^{-sjT}}{s^{j+2}}.
\end{equation}

Using the inversion formula
\begin{equation}
\mathcal{L}^{-1}\!\left[\frac{e^{-sjT}}{s^{m+1}}\right](t)
= \frac{(t-jT)^m}{\Gamma(m+1)}\,\theta(t-jT),
\end{equation}
we recover the explicit solution
\begin{equation}\label{eq:series_first}
y(t) = y(0)\sum_{j=0}^\infty (-a)^j  \frac{(t-jT)^j}{\Gamma(j+1)}\,\theta(t-jT)
+ b\sum_{j=0}^\infty (-a)^j  \frac{(t-jT)^{j+1}}{\Gamma(j+2)}\,\theta(t-jT).
\end{equation}

Since $\theta(t-jT)=0$ whenever $j>t/T$, the sums truncate at $j=\lfloor t/T\rfloor$, giving the finite representation
\begin{equation}\label{eq:finite_first}
y(t)=\sum_{j=0}^{\lfloor t/T\rfloor}\left(y(0)+\tfrac{b\,(t-jT)}{j+1}\right)(-a)^j  \frac{(t-jT)^j}{\Gamma(j+1)}\,\theta(t-jT).
\end{equation}
In particular, on $[0,T)$ only $j=0$ contributes, so
\begin{equation}\label{eq:first_interval}
y(t)=y(0)+b\,t,\qquad t\in[0,T).
\end{equation}

\begin{Theorem}[Series solution for the first-order delay equation]
Let $a, T>0$, $b\in\mathbb{R}$, and suppose $y(t)$ is causal, locally absolutely continuous, and of exponential order so that the classical Laplace transform $\mathcal{L}\{y\}$ exists for $\Re s$ sufficiently large. Consider the delay differential equation
\begin{equation}\label{eq:first_order_delay_repeat}
y'(t) + a  y(t - T) = b, \qquad y(0)=y_0,\quad y(t)=0 \;\text{for } t<0.
\end{equation}
If the convergence condition
\begin{equation}\label{eq:conv_first}
\left|\tfrac{a e^{-sT}}{s}\right|<1
\end{equation}
holds on the vertical line $\Re s=\sigma$ used for inversion, then the solution is given by the finite series
\begin{equation}\label{eq:finite_first_repeat}
y(t)=\sum_{j=0}^{\lfloor t/T\rfloor}\left(y(0)+\tfrac{b\,(t-jT)}{j+1}\right)(-a)^j  \frac{(t-jT)^j}{\Gamma(j+1)}\,\theta(t-jT).
\end{equation}
In particular, on the first interval $[0,T)$ the solution reduces to
\begin{equation}\label{eq:first_interval_repeat}
y(t)=y(0)+b\,t,\qquad t\in[0,T).
\end{equation}
\end{Theorem}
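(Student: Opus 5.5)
We follow the transform--expand--invert route already sketched before the statement, and then confirm the result directly in the time domain so that the convergence hypothesis \eqref{eq:conv_first} is needed only to justify the interchange of sum and inverse transform.

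\textbf{Transform and series expansion.} Since $y$ is causal and of exponential order, $Y(s)=\mathcal{L}\{y\}$ exists for $\Re s>\sigma_0$. Causality ($y(t)=0$ for $t<0$) gives $\mathcal{L}\{y(t-T)\}=e^{-sT}Y(s)$, with no initial-history integral. Local absolute continuity gives $\mathcal{L}\{y'\}=sY(s)-y_0$. Applying $\mathcal{L}$ to \eqref{eq:first_order_delay_repeat} then yields \eqref{eq:laplace_first}, and hence \eqref{eq:Y_first}. On the vertical line $\Re s=\sigma$ where \eqref{eq:conv_first} holds, the geometric series for $(1+ae^{-sT}/s)^{-1}$ converges. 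This produces \eqref{eq:Y_series_first}.

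\textbf{Term-by-term inversion.} Each term is inverted with the shifted power pair $\mathcal{L}^{-1}[e^{-sjT}/s^{m+1}]=(t-jT)^m\theta(t-jT)/\Gamma(m+1)$, using $m=j$ for the $y_0$ part and $m=j+1$ for the $b$ part. To justify exchanging the Bromwich integral with the infinite sum, I would first choose $\sigma>\max(|a|,\sigma_0)$. This is legitimate because $|ae^{-sT}/s|\le |a|/\sigma<1$ on that line, as Remark~\ref{rem:convergence_first} already indicates.

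I expect this justification to be the main obstacle. The $b$-part terms are $O(|s|^{-j-2})$, so the series converges absolutely and uniformly in $L^1$ of the line, and the exchange follows directly. For the $y_0$ part, the $j=0$ term $1/s$ is not absolutely integrable. I would therefore invert that term separately, using its known inverse $\theta(t)$. For $j\ge1$ the terms are $O(|s|^{-2})$ with geometric ratio $|a|/\sigma$, so dominated convergence applies.

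\textbf{Truncation and regrouping.} Because $\theta(t-jT)=0$ for $j>t/T$, only finitely many terms survive at each fixed $t$. Regrouping the two sums, with $\Gamma(j+2)=(j+1)\Gamma(j+1)$, gives \eqref{eq:finite_first_repeat}. For $t\in[0,T)$ only $j=0$ survives, giving \eqref{eq:first_interval_repeat}.

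\textbf{Independent time-domain check.} As a safeguard, I would verify the finite formula directly by the method of steps. Differentiate the $j$-th term:
\[
\frac{d}{dt}\Big[(-a)^j\frac{(t-jT)^j}{j!}\Big]=-a\,(-a)^{j-1}\frac{(t-jT)^{j-1}}{(j-1)!},
\]
and similarly for the $b$-terms. This shows that $y'(t)$ equals $b$ minus $a$ times the same sum evaluated at $t-T$ with the index shifted by one, which is exactly $b-ay(t-T)$. The boundary contributions at the breakpoints $t=jT$ vanish because the factor $(t-jT)^j$ vanishes there for $j\ge1$.

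\textbf{Uniqueness.} Uniqueness within the causal, exponential-order class follows from injectivity of $\mathcal{L}$, or alternatively from the method of steps, since on each interval $[kT,(k+1)T)$ the equation reduces to a plain integration.
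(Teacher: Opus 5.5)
Your proposal is correct and follows the same transform--expand--invert--truncate route as the paper's proof: the Laplace transform with the causal shift rule, geometric expansion of $(1+ae^{-sT}/s)^{-1}$, termwise inversion via $\mathcal{L}^{-1}[e^{-sjT}/s^{m+1}]$, Heaviside truncation, and regrouping with $\Gamma(j+2)=(j+1)\Gamma(j+1)$. The paper only asserts that termwise inversion is valid and that the resulting sum solves the equation; your choice $\sigma>\max(|a|,\sigma_0)$, your separate treatment of the non-integrable $y_0/s$ term, and your direct method-of-steps check with uniqueness supply those missing justifications.
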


\begin{proof}
Starting from the transformed equation \eqref{eq:Y_first}, expansion under condition \eqref{eq:conv_first} yields the series \eqref{eq:Y_series_first}. Each term of the form $s^{-(j+1)}e^{-sjT}$ corresponds, via the inversion formula
\begin{equation}
\mathcal{L}^{-1}\!\left[\frac{e^{-sjT}}{s^{j+1}}\right](t)
= \frac{(t-jT)^j}{\Gamma(j+1)}\,\theta(t-jT),
\end{equation}
to a delayed monomial weighted by $y(0)(-a)^j$. 

Similarly, each term of the form $s^{-(j+2)}e^{-sjT}$ corresponds to
\begin{equation}
\frac{(t-jT)^{j+1}}{\Gamma(j+2)}\,\theta(t-jT),
\end{equation}
multiplied by $b(-a)^j$. Collecting contributions yields the series representation \eqref{eq:series_first}.

Since $\theta(t-jT)=0$ whenever $j>t/T$, only finitely many terms contribute for each fixed $t>0$, truncating the series at $j=\lfloor t/T\rfloor$ and giving \eqref{eq:finite_first_repeat}. On the first interval $[0,T)$, only $j=0$ contributes, which reduces the solution to \eqref{eq:first_interval_repeat}.

Thus, the series representation \eqref{eq:finite_first_repeat} satisfies the delay differential equation \eqref{eq:first_order_delay_repeat} together with the initial conditions, completing the proof.
\end{proof}

\section{Differential Equation with Conformable  Derivative and Time Delay}

We consider the initial‑value problem with a conformable derivative and a fixed delay:
\begin{equation}\label{eq:model} 
T_\alpha y(t) + a\,y(t-T)=b,\qquad y(0)=y_0,\qquad y(t)=0\ \text{for }t<0,
\end{equation}
where $0<\alpha\le1$, $T>0$, and $a,b,y_0\in\mathbb{R}$.

\textbf{Step 1: Transform the equation.}
Apply $\mathcal{L}_\alpha$ to both sides of \eqref{eq:model}. Using the delay (shift) property for the conformable transform (valid under the causal assumption $y(t)=0$ for $t<0$),
\begin{equation}
\mathcal{L}_\alpha\{y(t-T)\}(s)=e^{-\frac{sT^\alpha}{\alpha}}\,\mathcal{L}_\alpha\{y\}(s),
\end{equation}
we obtain the algebraic equation
\begin{equation}\label{eq:transform_domain}
s\,Y_\alpha(s)-y_0 + a\,e^{-\frac{sT^\alpha}{\alpha}}\,Y_\alpha(s)=\frac{b}{s},
\end{equation}
where $Y_\alpha(s):=\mathcal{L}_\alpha\{y(t)\}(s)$.

\textbf{Step 2: Solve algebraically in the transform domain.}
Collect terms in $Y_\alpha(s)$ and solve:
\begin{equation}
\left(s + a\,e^{-\frac{sT^\alpha}{\alpha}}\right)Y_\alpha(s)=y_0+\frac{b}{s}.
\end{equation}
Equivalently,
\begin{equation}\label{eq:Yalpha_exact}
Y_\alpha(s)=\tfrac{y_0}{s\left(1+\dfrac{a\,e^{-\frac{sT^\alpha}{\alpha}}}{s}\right)}+\tfrac{b}{s^2\left(1+\dfrac{a\,e^{-\frac{sT^\alpha}{\alpha}}}{s}\right)}.
\end{equation}

\textbf{Step 3: Geometric expansion (region of validity).}
For values of $s$ with
\begin{equation}
\left|\frac{a\,e^{-\frac{sT^\alpha}{\alpha}}}{s}\right|<1
\end{equation}
(e.g. sufficiently large $\Re s$), expand the factor $1/(1+z)$ as a convergent geometric series with $z=\dfrac{a\,e^{-\frac{sT^\alpha}{\alpha}}}{s}$. Substituting into \eqref{eq:Yalpha_exact} yields
\begin{equation}\label{eq:Yalpha_series}
Y_\alpha(s)
= y_0\sum_{j=0}^\infty (-a)^j  \frac{e^{-\frac{j s T^\alpha}{\alpha}}}{s^{\,j+1}}
+ b\sum_{j=0}^\infty (-a)^j  \frac{e^{-\frac{j s T^\alpha}{\alpha}}}{s^{\,j+2}}.
\end{equation}

\textbf{Step 4: Inverse transform term by term.} Substituting $q= \alpha p$ in \eqref{eq13}, we obtain the standard conformable inverse pair
\begin{equation}
\mathcal{L}_\alpha\!\left\{\frac{t^{\alpha p}}{\alpha^p\Gamma(1+p)}\right\}(s)=s^{-(1+p)},\qquad p>-1.
\end{equation}
We invert each term in \eqref{eq:Yalpha_series} 
together with the delay‑shift identity
\begin{equation}\label{action-of-L}
\mathcal{L}_\alpha\!\left\{\frac{(t-j T)^{\alpha p}}{\alpha^p\Gamma(1+p)}\,\theta(t-jT)\right\}(s)
= s^{-(1+p)}\,e^{-\frac{j s T^\alpha}{\alpha}}.
\end{equation}
Applying $\mathcal{L}_\alpha^{-1}$ to each series term gives, for integer $j\ge0$,
\begin{equation}
\mathcal{L}_\alpha^{-1}\!\left\{\frac{e^{-\frac{j s T^\alpha}{\alpha}}}{s^{\,j+1}}\right\}
=\frac{(t-jT)^{\alpha j}}{\alpha^j \Gamma(j+1)}\,\theta(t-jT),
\end{equation}
and
\begin{equation}
\mathcal{L}_\alpha^{-1}\!\left\{\frac{e^{-\frac{j s T^\alpha}{\alpha}}}{s^{\,j+2}}\right\}
=\frac{(t-jT)^{\alpha(j+1)}}{\alpha^{j+1}\Gamma(j+2)}\,\theta(t-jT).
\end{equation}

\textbf{Step 5: Finite sum and final time‑domain expression.}
Because $\theta(t-jT)=0$ whenever $j>\lfloor t/T\rfloor$, the infinite series truncates for each fixed $t>0$. Combining the inverted terms yields the explicit time‑domain representation
\begin{equation}\label{eq:prop_series}
y(t)=\sum_{j=0}^{\lfloor t/T\rfloor}(-a)^j \left[
y_0\,\frac{(t-jT)^{\alpha j}}{\alpha^j \Gamma(j+1)}+b\,\frac{(t-jT)^{\alpha(j+1)}}{\alpha^{j+1}\Gamma(j+2)}
\right]\theta(t-jT).
\end{equation}
In particular, for $t\in[0,T)$ only the $j=0$ term contributes and
\begin{equation}
y(t)=y_0+\tfrac{b}{\alpha}\,t,\qquad t\in[0,T) \label{eq:prop_k=0}.
\end{equation}

\begin{Proposition}[Series solution for the conformable delay equation]
Let $0<\alpha\le1$, $T>0$, and let $y(t)$ satisfy \eqref{eq:model}  with causal initial data $y(t)=0$ for $t<0$. Assume the conformable Laplace transform of $y$ exists and that the geometric expansion in \eqref{eq:Yalpha_series} is valid for sufficiently large $\Re s$. Then for every $t>0$, $y(t)$ is given by the finite sum \eqref{eq:prop_series}. Moreover, on the first interval $t\in[0,T)$ the solution reduces to \eqref{eq:prop_k=0}.
\end{Proposition}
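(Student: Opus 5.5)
The plan is to follow the transform--solve--invert route already laid out in Steps 1--5 preceding the statement, and to make each step rigorous under the stated hypotheses. First, I apply $\mathcal{L}_\alpha$ to \eqref{eq:model}. I use \eqref{D.10} for $T_\alpha y$ and the causal delay identity \eqref{eq:laplace-delay-simplified}: by Remark 3.2 of \cite{Mohammadnezhad22}, the history integral vanishes because $y\equiv0$ on $[-T,0)$. I also use $\mathcal{L}_\alpha\{b\}=b/s$ from the earlier theorem. This yields the algebraic relation \eqref{eq:transform_domain}. Solving it gives \eqref{eq:Yalpha_exact}, valid wherever $s+a e^{-sT^\alpha/\alpha}\neq0$, and in particular on a right half-plane $\Re s>\sigma_0$.

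Second, I fix $\sigma$ large enough that $|a e^{-sT^\alpha/\alpha}/s|\le |a|/\sigma<1$ on $\Re s\ge\sigma$. This is the hypothesis of the proposition, in the spirit of Remark \ref{rem:convergence_first}. On that half-plane the geometric expansion converges uniformly, which gives \eqref{eq:Yalpha_series}. Third, I invert term by term using \eqref{eq13} with $q=\alpha p$ together with the shifted pair \eqref{action-of-L}, taking $p=j$ and $p=j+1$. The $j$-th terms then become the two delayed monomials multiplying $y_0(-a)^j$ and $b(-a)^j$.

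To justify exchanging $\mathcal{L}_\alpha^{-1}$ with the sum, I will argue on the time side. Fix a horizon $t\le N T$. The candidate series $\sum_j$ has only the terms $j\le N$ nonzero there, because of the factor $\theta(t-jT)$. Hence the series is a finite sum on every bounded interval and is trivially of exponential order. Its $\mathcal{L}_\alpha$-transform is therefore computed by finite linearity plus a tail estimate. The tail is bounded by $\sum_{j>N}|a|^j\sigma^{-j-1}e^{-j\sigma T^\alpha/\alpha}$, which tends to $0$. Hence the transform of the candidate equals $Y_\alpha(s)$, and injectivity of $\mathcal{L}_\alpha$ on continuous functions of exponential order identifies it with $y$. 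This injectivity follows from the substitution $u=t^\alpha/\alpha$, which reduces $\mathcal{L}_\alpha$ to the classical Laplace transform in $u$. The truncation at $j=\lfloor t/T\rfloor$ is then immediate, giving \eqref{eq:prop_series}.

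For the first interval, only $j=0$ survives, leaving $y_0+b\,t^{\alpha}/(\alpha\Gamma(2))$. I will double-check this by direct substitution: on $[0,T)$ the delayed term vanishes, so \eqref{eq:model} reads $t^{1-\alpha}y'=b$, whose solution is $y_0+b t^\alpha/\alpha$. I will state the first-interval formula \eqref{eq:prop_k=0} in this form, which coincides with $y_0+bt/\alpha$ only in the sense of the variable $t^\alpha$ (and exactly when $\alpha=1$).

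The main obstacle I expect is the shifted inversion pair \eqref{action-of-L}. Since $(t-jT)^\alpha\neq t^\alpha-(jT)^\alpha$ for $\alpha<1$, a literal change of variables does not produce the factor $e^{-jsT^\alpha/\alpha}$. My first attempt would be to take \eqref{eq:laplace-delay-simplified}, and its $j$-fold iterate, as the operational definition of the delay in the conformable frame, as the paper does via \cite{Mohammadnezhad22}. I would then verify consistency with the equation directly on $[T,2T)$ by differentiating the $j=1$ term. If that check fails, I would fall back on stating the result relative to the adopted shift rule, that is, with the delay measured in the variable $u=t^\alpha/\alpha$.
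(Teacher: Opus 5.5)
Your route is the paper's own: transform, expand geometrically, invert termwise, truncate by the Heaviside factors. Your correction of the first interval is right. There the delay term vanishes, and $t^{1-\alpha}y'=b$ gives $y_0+bt^\alpha/\alpha$, so \eqref{eq:prop_k=0} as printed is wrong for $\alpha<1$. But the obstacle you postpone to the end is not a technicality. It breaks the argument, starting with your Step~1, and your own substitution $u=t^\alpha/\alpha$ shows why. That substitution turns $\mathcal{L}_\alpha$ into the classical transform and $T_\alpha$ into $d/du$. However, it turns the constant delay $T$ in $t$ into the variable delay $\bigl(t^\alpha-(t-T)^\alpha\bigr)/\alpha$ in $u$, so no factor $e^{-sU}$ with constant $U$ can split off.

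Concretely, $(t-T)^\alpha>t^\alpha-T^\alpha$ for $t>T$, and $\mathcal{L}_\alpha\{\tfrac{t^\alpha-T^\alpha}{\alpha}\theta(t-T)\}=s^{-2}e^{-sT^\alpha/\alpha}$. Hence $\mathcal{L}_\alpha\{\tfrac{(t-T)^\alpha}{\alpha}\theta(t-T)\}(s)>s^{-2}e^{-sT^\alpha/\alpha}$ for real $s>0$. This refutes both \eqref{eq:laplace-delay-simplified} (take $y=t^\alpha/\alpha$) and \eqref{action-of-L} with $j=p=1$. Likewise, $\mathcal{L}_\alpha\{\theta(t-2T)\}=s^{-1}e^{-s(2T)^\alpha/\alpha}\neq s^{-1}e^{-2sT^\alpha/\alpha}$. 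So \eqref{eq:transform_domain} is not, in general, the transform of \eqref{eq:model}. Your sentence ``hence the transform of the candidate equals $Y_\alpha(s)$'' also rests on a false pair.

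The check on $[T,2T)$ that you propose settles the matter. Inserting the $j\le1$ terms of \eqref{eq:prop_series} gives
\[
T_\alpha y(t)+a\,y(t-T)-b=a\Bigl[1-\Bigl(\frac{t}{t-T}\Bigr)^{1-\alpha}\Bigr]\Bigl(y_0+\frac{b}{\alpha}(t-T)^\alpha\Bigr),\qquad T<t<2T,
\]
which is not identically zero unless $a=0$ or $y_0=b=0$. The reason is that $T_\alpha (t-jT)^{\alpha m}=\alpha m\,t^{1-\alpha}(t-jT)^{\alpha m-1}$ carries the weight $t^{1-\alpha}$, not $(t-jT)^{1-\alpha}$. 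So for $0<\alpha<1$ the proposition is false as stated. The paper's proof, which simply invokes \eqref{action-of-L}, has exactly the same gap.

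Your fallback does not recover \eqref{eq:prop_series} either, because the ``operational definition'' conflates two separate claims. Decreeing \eqref{eq:laplace-delay-simplified} amounts to replacing $y(t-T)$ by $y\bigl((t^\alpha-T^\alpha)^{1/\alpha}\bigr)$, a constant shift in $u$. Under that reading, \eqref{eq:Yalpha_exact} and its geometric expansion are legitimate. But \eqref{action-of-L} is then a computable identity, not something you can stipulate. The correct inverse of $s^{-(1+p)}e^{-jsT^\alpha/\alpha}$ is $(t^\alpha-jT^\alpha)^p\,\theta(t^\alpha-jT^\alpha)/\bigl(\alpha^p\Gamma(1+p)\bigr)$, which yields
\[
y(t)=\sum_{j=0}^{\lfloor (t/T)^\alpha\rfloor}(-a)^j\Bigl[y_0\,\frac{(t^\alpha-jT^\alpha)^{j}}{\alpha^j\,j!}+b\,\frac{(t^\alpha-jT^\alpha)^{j+1}}{\alpha^{j+1}(j+1)!}\Bigr].
\]
These terms activate at $t=j^{1/\alpha}T$ rather than at $t=jT$.

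For the ordinary delay in \eqref{eq:model}, the right tool is the method of steps: $y(t)=y(nT)+\int_{nT}^{t}\tau^{\alpha-1}\bigl(b-a\,y(\tau-T)\bigr)\,d\tau$ on $[nT,(n+1)T)$. Already on $[T,2T)$ this produces incomplete-beta integrals such as $\int_T^t\tau^{\alpha-1}(\tau-T)^\alpha\,d\tau$, not delayed monomials. The three descriptions coincide only at $\alpha=1$, where \eqref{eq:prop_series} reduces to \eqref{eq:finite_first}. What you can actually prove is one of three things: the corrected first-interval formula, the case $\alpha=1$, or the displayed formula for the $u$-shifted delay.

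Incidentally, the natural growth class for $\mathcal{L}_\alpha$ is exponential order in $t^\alpha/\alpha$, not in $t$. For $\alpha<1$, $\mathcal{L}_\alpha\{e^{t}\}$ exists for no $s$.
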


\begin{proof}
Starting from \eqref{eq:transform_domain}, we solved algebraically for $Y_\alpha(s)$ and expanded the rational factor as a geometric series under the stated smallness condition on $a e^{-sT^\alpha/\alpha}/s$. Each term in the resulting series is of the form $s^{-(j+1)}e^{-j s T^\alpha/\alpha}$ or $s^{-(j+2)}e^{-j s T^\alpha/\alpha}$, whose inverse conformable transforms are the delayed fractional monomials displayed above. The Heaviside factors truncate the sum at $j=\lfloor t/T\rfloor$, producing the finite sum \eqref{eq:prop_series}. The reduction on $[0,T)$ follows by setting $\lfloor t/T\rfloor=0$.
\end{proof}

\begin{Remark}

\begin{enumerate}
  \item \textbf{Region of validity.} The geometric expansion used in \eqref{eq:Yalpha_series} requires $\left|a\,e^{-\frac{sT^\alpha}{\alpha}}/s\right|<1$. This condition holds for sufficiently large $\Re s$ and justifies termwise inversion by analytic continuation or by Bromwich‑type contour arguments in the conformable transform setting.
  \item \textbf{Causality and finiteness.} Causality (the assumption $y(t)=0$ for $t<0$) ensures the delay‑shift identity and guarantees that, for each fixed $t$, only finitely many delayed monomials are active; hence, the time‑domain representation is a finite sum.
  \item \textbf{Comparison with classical Laplace.} Attempting the same procedure with the classical Laplace transform applied to the equivalent form $t^{1-\alpha}y'(t)+a y(t-T)=b$ produces factors $t^{\beta}$ with noninteger $\beta$. Their classical transforms involve fractional $s$-derivatives rather than elementary algebraic operations on $Y(s)$, which complicates inversion. The conformable transform restores the algebraic transform–solve–invert workflow and yields the explicit series above.
\end{enumerate}
\end{Remark}

\section{Analytic $b(t)$}

In this section, we further extend the scope by analyzing the analytic $b(t)$. 

\subsection{Conformable derivatives}
 
Now, we extend the model \eqref{eq:model} by considering a time depending right hand side $b(t)$
 \begin{equation}\label{eq:model_nl}
T_\alpha y(t)+a\,y(t-T)=b(t), \qquad y(0)=y_0,\qquad y(t)=0\ \text{for }t<0,
\end{equation}
where $0<\alpha\le1$, $T>0$, and $a,b,y_0\in\mathbb{R}$.

Let us assume that $b(t)$ is analytical and that have the expansion 
\begin{equation}
    b(t)= \sum_{k=0}^{\infty} b_k \left(\frac{t^{\alpha}}{\alpha}\right)^k. \label{nl}
\end{equation}

\subsubsection{Laplace-domain representation}

Applying the Laplace transform $\mathcal{L}_\alpha$ we obtain 
\begin{equation}\label{eq:Yalpha_exact_nl}
Y_\alpha(s)=\tfrac{y_0}{s\left(1+\dfrac{a\,e^{-\frac{sT^\alpha}{\alpha}}}{s}\right)}+\tfrac{\sum_{k=0}^{\infty} b_k\Gamma(k+1) s^{-(2+ k) }}{\left(1+\dfrac{a\,e^{-\frac{sT^\alpha}{\alpha}}}{s}\right)}.
\end{equation}
For values of $s$ with
\begin{equation}
\left|\frac{a\,e^{-\frac{sT^\alpha}{\alpha}}}{s}\right|<1
\end{equation}
(e.g. sufficiently large $\Re s$), expand the factor $1/(1+z)$ as a convergent geometric series with $z=\dfrac{a\,e^{-\frac{sT^\alpha}{\alpha}}}{s}$. Substituting into \eqref{eq:Yalpha_exact_nl} we obtain 
\begin{equation}\label{eq:Yalpha_series_nl}
Y_\alpha(s)
= y_0\sum_{j=0}^\infty (-a)^j  \frac{e^{-\frac{j s T^\alpha}{\alpha}}}{s^{\,j+1}}
+ \sum_{j=0}^{\infty}\sum_{k=0}^{\infty} (-a)^{j} e^{-\frac{j s T^\alpha}{\alpha}} b_k \Gamma(k+1)  s^{-(j+k+2)}.
\end{equation}
Using the property \eqref{action-of-L}, we obtain
\begin{equation}\label{eq:prop_series_nl}
y(t)=\sum_{j=0}^{\lfloor t/T\rfloor}(-a)^j 
\left[y_0\,\frac{(t-jT)^{\alpha j}}{\alpha^j \Gamma(j+1)}  + \sum_{k=0}^{\infty}
b_k\,\frac{\Gamma(k+1) (t-jT)^{\alpha(j+k+1)}}{\alpha^{j+k+1}\Gamma(j+k+2)}\right] \theta(t-jT) .
\end{equation}

The external sum is finite by the Heaviside functions. 
Examples of functions that admit the expansion \eqref{nl} are:

\begin{itemize}
    \item Polynomial functions, e.g.
    \begin{equation}
    b(t)=c_0+c_1\frac{t^\alpha}{\alpha}+c_2\left(\frac{t^\alpha}{\alpha}\right)^2+\cdots,
    \end{equation}
    which correspond to finite truncations of the series.
    
    \item Exponential functions in conformable form,
    \begin{equation}
    b(t)=e^{\lambda \frac{t^\alpha}{\alpha}}
    =\sum_{k=0}^\infty \frac{\lambda^k}{k!}\left(\frac{t^\alpha}{\alpha}\right)^k,
    \end{equation}
    defining the conformable exponential $e_\alpha(\lambda, t)$.

    \item Trigonometric functions in conformable notation:
    \begin{equation}
    \sin_\alpha(t):=\sin\!\left(\frac{t^\alpha}{\alpha}\right)
    =\sum_{k=0}^\infty (-1)^k \frac{\left(\tfrac{t^\alpha}{\alpha}\right)^{2k+1}}{(2k+1)!},
    \end{equation}
    \begin{equation}
    \cos_\alpha(t):=\cos\!\left(\frac{t^\alpha}{\alpha}\right)
    =\sum_{k=0}^\infty (-1)^k \frac{\left(\tfrac{t^\alpha}{\alpha}\right)^{2k}}{(2k)!}.
    \end{equation}

    \item More generally, any entire function $f(z)$ with Taylor expansion
    \begin{equation}
    f(z)=\sum_{k=0}^\infty c_k z^k
    \end{equation}
    admits the representation $b(t)=f\!\left(\tfrac{t^\alpha}{\alpha}\right)$.
\end{itemize}

Thus, the framework encompasses a wide class of analytic forcings, from simple polynomials to transcendental functions, all of which are reducible to the series form \eqref{nl}. In particular, the conformable sine and cosine functions $\sin_\alpha(t)$ and $\cos_\alpha(t)$ provide natural oscillatory forcings within this setting.

\begin{Theorem}[Series solution for the conformable delay equation]
Let $0<\alpha\le1$, $a>0$, $T>0$, and suppose $y(t)$ is causal, locally absolutely continuous, and of exponential order so that $\mathcal{L}_\alpha\{y\}$ exists for $\Re s$ sufficiently large. Assume $b(t)$ is analytic with expansion \eqref{nl}. 
If the convergence condition
\begin{equation}\label{eq:conv_conf}
\left|\tfrac{a\,e^{-sT^\alpha/\alpha}}{s}\right|<1
\end{equation}
holds on the vertical line $\Re s=\sigma$ used for inversion, then the series
\begin{equation}\label{eq:series_conf}
y(t)=\sum_{j=0}^{\lfloor t/T\rfloor}(-a)^j 
\left[y_0\,\frac{(t-jT)^{\alpha j}}{\alpha^j \Gamma(j+1)}  
+ \sum_{k=0}^{\infty} b_k\,\frac{\Gamma(k+1)\,(t-jT)^{\alpha(j+k+1)}}{\alpha^{j+k+1}\Gamma(j+k+2)}\right] \theta(t-jT)
\end{equation}
is the exact solution of the conformable fractional-delay equation \eqref{eq:model_nl}.
\end{Theorem}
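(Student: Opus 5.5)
The plan is to follow the transform–solve–invert route already used for the constant forcing case, and to treat the new ingredient, the analytic forcing \eqref{nl}, as a second (inner) series. We then justify the interchanges of summation and inversion. First, apply $\mathcal{L}_\alpha$ to \eqref{eq:model_nl}. We use \eqref{D.10} for $T_\alpha y$ and the causal delay identity \eqref{eq:laplace-delay-simplified} for $y(t-T)$. For the forcing, we apply \eqref{eq13} with $q=\alpha k$ to each monomial, which gives
\begin{equation*}
\mathcal{L}_\alpha\Big\{\big(\tfrac{t^\alpha}{\alpha}\big)^k\Big\}=\frac{\Gamma(k+1)}{s^{k+1}}.
\end{equation*}
Summing termwise, $\mathcal{L}_\alpha\{b\}(s)=\sum_k b_k\Gamma(k+1)s^{-(k+1)}$. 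The termwise step is legitimate for $\Re s$ large whenever $\sum_k|b_k|\,\Gamma(k+1)\,\sigma^{-k}<\infty$, i.e.\ whenever $\mathcal{L}_\alpha\{|b|\}$ converges; monotone convergence in $t$ then gives it. Solving the resulting algebraic equation reproduces \eqref{eq:Yalpha_exact_nl}.

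Second, on the inversion line $\Re s=\sigma$, hypothesis \eqref{eq:conv_conf} lets us expand $(1+z)^{-1}$ with $z=a e^{-sT^\alpha/\alpha}/s$ as a geometric series. Multiplying it against the absolutely convergent series for $\mathcal{L}_\alpha\{b\}$ gives the double series \eqref{eq:Yalpha_series_nl}. The rearrangement is justified by absolute convergence of the Cauchy-type product, since both factors converge absolutely on that line. Next, invert term by term using \eqref{action-of-L}. The $y_0$ terms use $p=j$. The forcing term with indices $(j,k)$ uses $p=j+k+1$ and gives
\begin{equation*}
(-a)^j b_k\Gamma(k+1)\frac{(t-jT)^{\alpha(j+k+1)}}{\alpha^{j+k+1}\Gamma(j+k+2)}\theta(t-jT).
\end{equation*}
The factor $\theta(t-jT)$ kills every $j>\lfloor t/T\rfloor$, so the outer sum is finite and we arrive at \eqref{eq:series_conf}.

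The main obstacle is the legitimacy of termwise inversion of an infinite double series. I would handle it in time, not along a Bromwich contour. For fixed $t$, the outer index is finite. The inner series in $k$ is dominated by
\begin{equation*}
\sum_k |b_k|\,\frac{\Gamma(k+1)}{\Gamma(j+k+2)}\,x_j^{\,k}, \qquad x_j=\frac{(t-jT)^\alpha}{\alpha}.
\end{equation*}
Since $\Gamma(k+1)/\Gamma(j+k+2)\le 1/(k+1)$, this is at most $\sum_k|b_k|x_j^k$. That sum converges because \eqref{nl} converges absolutely at $t$, and $x_j\le t^\alpha/\alpha$. So the candidate $y$ is a locally uniformly convergent series of functions. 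Each partial sum has conformable transform equal to the corresponding partial sum of \eqref{eq:Yalpha_series_nl}. Dominated convergence then identifies $\mathcal{L}_\alpha\{y\}$ with $Y_\alpha$, using exponential-order bounds on the dominating series, which follow from the assumed growth of $b$. Uniqueness of $\mathcal{L}_\alpha$ on causal functions of exponential order, as invoked in the earlier propositions, then shows that \eqref{eq:series_conf} is the solution. As a consistency check, taking $b_k=0$ for $k\ge1$ recovers \eqref{eq:prop_series}.
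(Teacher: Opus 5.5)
There is a genuine gap. It is not in the interchange arguments you worry about; your time-domain justification via local uniform convergence, dominated convergence and Lerch-type uniqueness is tidier than the paper's Bromwich-line argument. The gap is in the two identities you import, \eqref{eq:laplace-delay-simplified} and \eqref{action-of-L}, which are false for $0<\alpha<1$. Substituting $u=t-T$,
\[
\mathcal{L}_\alpha\{y(t-T)\}(s)=\int_0^\infty e^{-s(u+T)^\alpha/\alpha}\,(u+T)^{\alpha-1}\,y(u)\,du ,
\]
which is not $e^{-sT^\alpha/\alpha}Y_\alpha(s)$, because $(u+T)^\alpha\neq u^\alpha+T^\alpha$. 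The shift rule is even internally inconsistent: delaying twice by $T$ would give the factor $e^{-2sT^\alpha/\alpha}$, while delaying once by $2T$ gives $e^{-s(2T)^\alpha/\alpha}$, and $2^\alpha\neq 2$.

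Hence $Y_\alpha$ in \eqref{eq:Yalpha_exact_nl} is not the transform of the solution, and the conclusion itself fails. Take $b\equiv 0$, $y_0=1$. On $[0,T)$ the solution is $y\equiv 1$. On $[T,2T)$ the equation reads $t^{1-\alpha}y'(t)=-a$, so $y(t)=1-a(t^\alpha-T^\alpha)/\alpha$, whereas \eqref{eq:series_conf} gives $1-a(t-T)^\alpha/\alpha$. More generally, on $(T,2T)$ the series leaves the residual $a\,\phi(t-T)\bigl[1-\bigl(t/(t-T)\bigr)^{1-\alpha}\bigr]$, where $\phi(t)=y_0+\sum_k b_k (t^\alpha/\alpha)^{k+1}/(k+1)$ is the $j=0$ block. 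The reason is that $T_\alpha$ carries the weight $t^{1-\alpha}$, not $(t-T)^{1-\alpha}$.

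The paper's proof follows the same route and has exactly the same defect. Besides the shift identity, its verification step asserts $T_\alpha\big((t-jT)^{\alpha m}/(\alpha^m\Gamma(m+1))\big)=(t-jT)^{\alpha(m-1)}/(\alpha^{m-1}\Gamma(m))$, which is off by the factor $\bigl(t/(t-jT)\bigr)^{1-\alpha}$. As written, the theorem holds only for $\alpha=1$.

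What the method genuinely proves is clear in the variable $\xi=t^\alpha/\alpha$. There $T_\alpha=d/d\xi$ and $\mathcal{L}_\alpha$ is the classical Laplace transform in $\xi$. So $e^{-sT^\alpha/\alpha}$ encodes the shift $\xi\mapsto\xi-T^\alpha/\alpha$, i.e. the delayed argument $(t^\alpha-T^\alpha)^{1/\alpha}$, not $t-T$. If you replace $y(t-T)$ by $y\bigl((t^\alpha-T^\alpha)^{1/\alpha}\bigr)$, your argument goes through essentially verbatim. It yields the series with $(t-jT)^{\alpha p}$ replaced by $(t^\alpha-jT^\alpha)^{p}$, with activation at $t=j^{1/\alpha}T$ and the outer sum running to $\lfloor (t/T)^\alpha\rfloor$. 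With the ordinary delay $y(t-T)$, the problem in $\xi$ has a variable delay, and the transform method no longer produces a closed form of this shape.
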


\begin{proof}
Starting from the transformed equation \eqref{eq:Yalpha_exact_nl}, the denominator is expanded as a geometric series under the condition \eqref{eq:conv_conf}. This produces the double series \eqref{eq:Yalpha_series_nl}, which converges uniformly on the inversion line. Uniform convergence ensures that termwise inversion is valid.

Each term of the form $s^{-(j+1)}e^{-\frac{j s T^\alpha}{\alpha}}$ in the first sum of \eqref{eq:Yalpha_series_nl} corresponds, via the inversion property of $\mathcal{L}_\alpha$, to the delayed conformable monomial
\begin{equation}
\frac{(t-jT)^{\alpha j}}{\alpha^j \Gamma(j+1)}\,\theta(t-jT).
\end{equation}
Similarly, each term of the form $s^{-(j+k+2)}e^{-\frac{j s T^\alpha}{\alpha}}$ in the second sum corresponds to
\begin{equation}
\frac{(t-jT)^{\alpha(j+k+1)}}{\alpha^{j+k+1}\Gamma(j+k+2)}\,\theta(t-jT),
\end{equation}
multiplied by $b_k\Gamma(k+1)$. Collecting contributions yields precisely the series \eqref{eq:series_conf}.

To verify that this series solves \eqref{eq:model_nl}, recall that the conformable derivative acts on fractional monomials as
\begin{equation}
T_\alpha\!\left(\frac{(t-jT)^{\alpha m}}{\alpha^m \Gamma(m+1)}\right)
= \frac{(t-jT)^{\alpha(m-1)}}{\alpha^{m-1}\Gamma(m)}.
\end{equation}
On the first interval $[0, T)$, only the $j=0$ terms contribute, and differentiation reconstructs the forcing expansion $b(t)$ while the delay term vanishes. On a general interval $[jT,(j+1)T)$, the derivative of the initial-condition term cancels with the delay contribution from the previous epoch, while the forcing terms reproduce $b(t)$ exactly. The prefactor $(-a)^j$ guarantees recursive cancellation across successive intervals.

By induction over $j$, the solution holds for all $t>0$. Finally, the initial conditions are satisfied: at $t=0$ only $y(0)$ remains, and for $t<0$ all terms vanish due to the Heaviside factors. Hence, the series \eqref{eq:series_conf} is the exact solution of the conformable fractional-delay equation.
\end{proof}

\textbf{Conformable exponential.}
For $e^z=\sum_{k=0}^\infty \frac{z^k}{k!}$ we have $b_k=\tfrac{1}{k!}$, hence
\begin{equation}
y(t)=\sum_{j=0}^{\lfloor t/T\rfloor}(-a)^j\Bigg[
y_0\,\frac{(t-jT)^{\alpha j}}{\alpha^j j!}
+\sum_{k=0}^\infty \frac{(t-jT)^{\alpha(j+k+1)}}{\alpha^{j+k+1}(k+1)!}
\Bigg]\theta(t-jT).
\end{equation}

\textbf{Conformable sine.}
For $\sin z=\sum_{k=0}^\infty (-1)^k \frac{z^{2k+1}}{(2k+1)!}$ we have $b_{2k+1}=\tfrac{(-1)^k}{(2k+1)!}$, $b_{2k}=0$, hence
\begin{equation}
y(t)=\sum_{j=0}^{\lfloor t/T\rfloor}(-a)^j\Bigg[
y_0\,\frac{(t-jT)^{\alpha j}}{\alpha^j j!}
+\sum_{k=0}^\infty (-1)^k\,\frac{(t-jT)^{\alpha(j+2k+2)}}{\alpha^{j+2k+2}(j+2k+2)}
\Bigg]\theta(t-jT).
\end{equation}

\textbf{Conformable cosine.}
For $\cos z=\sum_{k=0}^\infty (-1)^k \frac{z^{2k}}{(2k)!}$ we have $b_{2k}=\tfrac{(-1)^k}{(2k)!}$, $b_{2k+1}=0$, hence
\begin{equation}
y(t)=\sum_{j=0}^{\lfloor t/T\rfloor}(-a)^j\Bigg[
y_0\,\frac{(t-jT)^{\alpha j}}{\alpha^j j!}
+\sum_{k=0}^\infty (-1)^k\,\frac{(t-jT)^{\alpha(j+2k+1)}}{\alpha^{j+2k+1}(j+2k+1)}
\Bigg]\theta(t-jT).
\end{equation}

\subsubsection{Analytic caveats and region of validity}

\begin{itemize}
  \item \textbf{Geometric expansion and domain of convergence.}  
    The expansion
    \begin{equation}
    \frac{1}{1+z}=\sum_{j=0}^\infty (-1)^j z^j,\qquad |z|<1,
    \end{equation}
    was applied in the conformable $s$-domain with
    $z=\dfrac{a\,e^{-sT^\alpha/\alpha}}{s}$. Thus the series representation of $Y_\alpha(s)$ is valid for those $s$ satisfying
    \begin{equation}
    \left|\tfrac{a\,e^{-sT^\alpha/\alpha}}{s}\right|<1.
    \end{equation}
    In practice, this inequality holds for sufficiently large $\Re s$. One then obtains a representation of $Y_\alpha(s)$ that is analytic in a right half‑plane; inversion is justified by analytic continuation or by deforming the inversion contour into that half‑plane.

  \item \textbf{Justification of termwise inversion.}  
    To invert the geometric series termwise, we require uniform convergence of the series on a vertical strip $\Re s=\sigma$ used for inversion (or domination by an $L^1$-integrable majorant on the Bromwich contour). Under the hypotheses that $y(t)$ is of exponential order and that the geometric factor satisfies the above smallness condition for $\Re s\ge\sigma$, the series converges uniformly on the contour, and Fubini / dominated convergence justifies termwise inversion. Equivalently, one may invert each rational–exponential term and then sum the resulting delayed monomials; for fixed $t$, the Heaviside factors truncate the sum, producing a finite, exact expression.

  \item \textbf{Bromwich contour and conformable inversion.}  
    The conformable inverse transform is implemented by a Bromwich‑type integral in the conformable $s$-plane. When the transform is analytic in a right half‑plane containing the vertical inversion line, contour deformation and residue calculus (or standard inverse‑transform arguments) validate the passage from the $s$-domain series to the time‑domain series.

  \item \textbf{Branch cuts and multi‑valuedness in the classical approach.}  
    The classical Laplace transform of noninteger powers produces factors $s^{-\gamma}$ with $\gamma\not\in\mathbb{Z}$. Such factors are multi-valued and require branch cuts; the inversion integral must be defined on a chosen branch, and it typically involves Hankel or Mellin contours. This introduces additional analytic complexity (branch selection, cut placement, and associated contributions) and often yields solutions expressed in terms of special functions rather than elementary kernels.

  \item \textbf{Regularity and growth assumptions.}  
    The formal operator identities and inversion steps above assume:
    \begin{enumerate}
      \item $y(t)$ is locally absolutely continuous on $[0,\infty)$ and of exponential order, so that $\mathcal{L}\{y\}$ and $\mathcal{L}_\alpha\{y\}$ exist for $\Re s$ large enough;
      \item the geometric factor $a e^{-sT^\alpha/\alpha}/s$ is uniformly small on the chosen inversion contour;
      \item termwise differentiation and integration under the integral sign are justified by dominated convergence.
    \end{enumerate}
    These hypotheses are standard in transform methods and should be verified in concrete applications.
\end{itemize}
When the above analytic conditions hold, the conformable transform route yields explicit, finite (for fixed $t$) series solutions obtained by straightforward inversion of elementary terms. If the conditions fail, one must resort to analytic continuation, asymptotic expansions, or numerical inversion techniques.

\subsection{Caputo derivatives}
For comparison, consider the Caputo fractional-delay problem
\begin{equation}\label{eq:Caputo_model_nl}
{}^{C}D_t^{\alpha} y(t)+a\,y(t-T)=b(t), \qquad y(0)=y_0,\qquad y(t)=0\ \text{for }t<0,
\end{equation}
with $0<\alpha\le1$, $T>0$, and $a,b,y_0\in\mathbb{R}$. The Caputo derivative is defined by
\begin{equation}\label{eq:Caputo_def}
{}^{C}D_t^{\alpha} f(t) = \frac{1}{\Gamma(1-\alpha)} \int_0^t (t-\tau)^{-\alpha} f'(\tau)\, d\tau,
\end{equation}
and $b(t)$ is given as in \eqref{nl}. Unlike the conformable case \eqref{eq:model}, the Caputo derivative introduces a nonlocal memory term through the convolution kernel $(t-\tau)^{-\alpha}$, so the Laplace-domain representation involves fractional powers and integral operators in $s$ rather than the simple algebraic relation obtained with $\mathcal{L}_\alpha$.

\subsubsection{Laplace-domain representation}
For the analysis, we require
\begin{equation}\label{eq:Laplace_power}
  \mathcal{L}\!\left\{ \left(\frac{t^{\alpha}}{\alpha}\right)^k \right\}(s),
\end{equation}
and since
\begin{equation}
\left(\frac{t^{\alpha}}{\alpha}\right)^k = \frac{t^{\alpha k}}{\alpha^k}, \qquad
\mathcal{L}\{t^\nu\}(s) = \frac{\Gamma(\nu+1)}{s^{\nu+1}}, \ \nu>-1,
\end{equation}
we obtain
\begin{equation}\label{eq:Laplace_result}
  \mathcal{L}\!\left\{ \left(\frac{t^{\alpha}}{\alpha}\right)^k \right\}(s)
  = \frac{1}{\alpha^k}\,\frac{\Gamma(\alpha k+1)}{s^{\alpha k+1}}.
\end{equation}
This identity will be used repeatedly, since each term contributes a rational power of $s$ weighted by Gamma-function coefficients.

Applying the Laplace transform to \eqref{eq:Caputo_model_nl} gives
\begin{equation}\label{eq:Caputo_transform}
\left(s^{\alpha} + a e^{-sT} \right)Y(s) - s^{\alpha-1} y(0) 
= \sum_{k=0}^{\infty} \frac{b_k}{\alpha^k}\,\frac{\Gamma(\alpha k+1)}{s^{\alpha k+1}},
\end{equation}
so that
\begin{equation}\label{eq:Caputo_Y}
Y(s) = \tfrac{y(0)}{s \left(1 + \tfrac{a e^{-sT}}{s^{\alpha}}\right)} 
+ \tfrac{\sum_{k=0}^{\infty} \tfrac{b_k}{\alpha^k}\,\tfrac{\Gamma(\alpha k+1)}{s^{\alpha k+\alpha+1}}}
{\left(1 + \tfrac{a e^{-sT}}{s^\alpha}\right)}.
\end{equation}

Expanding the denominator as a geometric series under the condition 
$0<\left|\tfrac{a e^{-sT}}{s^{\alpha}}\right|<1$, we obtain
\begin{equation}\label{eq:Caputo_series}
Y(s) = y(0) \sum_{j=0}^{\infty} (-1)^j \frac{a^j e^{-s jT}}{s^{\alpha j+1}}
+ \sum_{j=0}^{\infty}\sum_{k=0}^{\infty} (-a)^j  \frac{\Gamma(\alpha k+1)}{\alpha^k} b_k 
\frac{e^{-s jT}}{s^{(j+k+1)\alpha+1}}.
\end{equation}

Using the standard inversion formula
\begin{equation}
\mathcal{L}^{-1}\!\left[\frac{e^{-sT}}{s^{\nu+1}}\right](t)
= \frac{(t-T)^\nu}{\Gamma(\nu+1)}\,\theta(t-T), \qquad \nu>-1,
\end{equation}
we obtain the explicit time-domain solution
\begin{equation}\label{eq:inverse_explicit}
\begin{split}
y(t) &= y(0)\sum_{j=0}^{\lfloor t/T \rfloor} (-a)^j  
\frac{(t-jT)^{\alpha j}}{\Gamma(\alpha j+1)}\,\theta(t-jT) \\
&\quad + \sum_{j=0}^{\lfloor t/T \rfloor}\sum_{k=0}^{\infty} (-a)^j  
\frac{\Gamma(\alpha k+1)}{\alpha^k}\, b_k \,
\frac{(t-jT)^{(j+k+1)\alpha}}{\Gamma((j+k+1)\alpha+1)}\,\theta(t-jT).
\end{split}
\end{equation}

The outer sum is truncated at $j=\lfloor t/T \rfloor$ because the Heaviside factor $\theta(t-jT)$ enforces causality: only those delayed terms with $jT\leq t$ contribute at time $t$. Thus, for each fixed $t$, the series contains finitely many delay contributions.

Factorizing common terms, this can be written as
\begin{equation}\label{eq:inverse_explicit_factorized}
y(t) = \sum_{j=0}^{\lfloor t/T \rfloor} (-a)^j  \,\theta(t-jT)\Bigg[
   y(0)\,\tfrac{(t-jT)^{\alpha j}}{\Gamma(\alpha j+1)} 
   + \sum_{k=0}^{\infty} \tfrac{\Gamma(\alpha k+1)}{\alpha^k}\, b_k \,
   \tfrac{(t-jT)^{(j+k+1)\alpha}}{\Gamma((j+k+1)\alpha+1)}
\Bigg].
\end{equation}

Thus, the Caputo solution is expressed as a double series of delayed fractional monomials, each weighted by Gamma-function coefficients and activated at multiples of the delay interval $T$. This highlights the essential difference: while the conformable transform preserves algebraicity, the Caputo derivative produces nonlocal structures in both the transform and time domains.

\begin{Theorem}[Series solution for the Caputo delay equation]
Let $\alpha \in (0,1)$, $a>0$, $T>0$, and suppose $y(t)$ is causal, locally absolutely continuous, and of exponential order so that the classical Laplace transform $\mathcal{L}\{y\}$ exists for $\Re s$ sufficiently large. Consider the Caputo fractional-delay equation
\begin{equation}\label{eq:Caputo_model_nl_repeat}
{}^C D^\alpha y(t) + a\,y(t-T) = b(t), \qquad y(0)=y_0,\quad y(t)=0 \;\text{for } t<0,
\end{equation}
with forcing expansion \eqref{nl}. 
If the convergence condition
\begin{equation}\label{eq:conv_caputo}
\left|\tfrac{a e^{-sT}}{s^\alpha}\right|<1
\end{equation}
holds on the vertical line $\Re s=\sigma$ used for inversion, then the series
\begin{equation}\label{eq:series_caputo}
y(t) = \sum_{j=0}^{\lfloor t/T \rfloor} (-a)^j \,\theta(t-jT)\Bigg[
   y(0)\,\frac{(t-jT)^{\alpha j}}{\Gamma(\alpha j+1)} 
   + \sum_{k=0}^{\infty} \frac{\Gamma(\alpha k+1)}{\alpha^k}\, b_k \,
   \frac{(t-jT)^{(j+k+1)\alpha}}{\Gamma((j+k+1)\alpha+1)}
\Bigg]
\end{equation}
is the exact solution of the Caputo fractional-delay equation \eqref{eq:Caputo_model_nl_repeat}.
\end{Theorem}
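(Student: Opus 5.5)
We follow the same transform–expand–invert route as for the conformable version, now with the classical transform $\mathcal{L}$. Each step is then checked against the Caputo structure.

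\textbf{Step 1: transformed equation.} Under the standing assumptions ($y$ causal, locally absolutely continuous, of exponential order), the Caputo rule $\mathcal{L}\{{}^C D^\alpha y\}=s^\alpha Y(s)-s^{\alpha-1}y(0)$ holds for $0<\alpha<1$. Causality gives the shift rule $\mathcal{L}\{y(t-T)\}=e^{-sT}Y(s)$. For the forcing we use \eqref{eq:Laplace_result} termwise, which needs one justification: $\sum_k |b_k|\,\Gamma(\alpha k+1)\alpha^{-k}(\Re s)^{-\alpha k-1}$ must converge for $\Re s\ge\sigma$. We will impose this as the meaning of ``$b$ analytic with expansion \eqref{nl}'' (for instance, $b_k$ decaying like $1/k!$ suffices, since $\Gamma(\alpha k+1)/k!\to 0$ geometrically fast). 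This yields \eqref{eq:Caputo_transform}. Dividing by $s^\alpha\bigl(1+a e^{-sT}s^{-\alpha}\bigr)$ then gives \eqref{eq:Caputo_Y}.

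\textbf{Step 2: geometric expansion and termwise inversion.} On the line $\Re s=\sigma$ where \eqref{eq:conv_caputo} holds, set $z=a e^{-sT}/s^\alpha$ and expand $1/(1+z)=\sum_j(-z)^j$. This gives the double series \eqref{eq:Caputo_series}. Each term has the form $c\,e^{-sjT}s^{-(\nu+1)}$ with $\nu=\alpha j$ or $\nu=(j+k+1)\alpha$, all $>-1$. Each is therefore the classical transform of $c\,(t-jT)^\nu\theta(t-jT)/\Gamma(\nu+1)$, by the stated inversion formula.

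We will then argue in the time domain rather than through a Bromwich integral. Define $\tilde y(t)$ as the right-hand side of \eqref{eq:series_caputo}. For fixed $t$ the $j$-sum is finite, and each inner $k$-series is dominated by an entire-type series. Hence $\tilde y$ is continuous and of exponential order. Its Laplace transform can be computed termwise by monotone/dominated convergence, since all terms are nonnegative after taking absolute values and the sum of absolute transforms converges for $\Re s$ large. It equals \eqref{eq:Caputo_series}, hence $Y(s)$. By uniqueness of the Laplace transform (Lerch's theorem) for continuous functions, $\tilde y=y$.

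\textbf{Step 3: direct verification (the main obstacle).} Strictly, Steps 1–2 presuppose that a solution $y$ of the stated class exists. To close the argument, I would also verify directly that $\tilde y$ solves \eqref{eq:Caputo_model_nl_repeat}. One might try to mimic the conformable proof with the monomial rule ${}^C D^\alpha\, t^{\nu}/\Gamma(\nu+1)=t^{\nu-\alpha}/\Gamma(\nu-\alpha+1)$, but this fails for the shifted pieces. The Caputo derivative of $(t-jT)^\nu\theta(t-jT)$, taken with lower limit $0$, equals the shifted monomial rule only because the function and its derivative vanish on $[0,jT]$. That holds when $\nu>0$ and $j\ge1$, and it does hold here, since $\nu\ge \alpha j>0$.

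Granting this, applying ${}^CD^\alpha$ termwise to the $j$-th block maps
\[
y_0\frac{(t-jT)^{\alpha j}}{\Gamma(\alpha j+1)}\mapsto y_0\frac{(t-jT)^{\alpha(j-1)}}{\Gamma(\alpha(j-1)+1)},
\]
with the $j=0$ constant annihilated. It likewise lowers $(j+k+1)\alpha$ to $(j+k)\alpha$. Index-shifting $j\to j+1$ shows that these terms cancel exactly against $a\tilde y(t-T)$. The leftover $j=0$ forcing terms give $\sum_k b_k\Gamma(\alpha k+1)\alpha^{-k}t^{\alpha k}/\Gamma(\alpha k+1)=b(t)$.

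Termwise application of the Caputo operator to the $k$-series is justified by local uniform convergence of the differentiated series, away from the points $t=jT$, together with integrability of the kernel $(t-\tau)^{-\alpha}$. Finally, $\tilde y(0)=y_0$ and $\tilde y=0$ for $t<0$ by the Heaviside factors. Uniqueness within the class then follows from Step 2, or alternatively from the method of steps on each $[jT,(j+1)T)$ with Caputo linear uniqueness.
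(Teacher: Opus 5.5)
Your proposal is correct, and its core (Step 3) is the paper's own proof: direct verification via the shifted Caputo power rule, with block $j$ cancelling against the delayed block $j-1$. Your global index shift is tidier than the paper's interval-by-interval account, which misstates that the $j\ge 1$ forcing terms ``reconstruct'' $b(t)$ (they cancel against $a\,y(t-T)$, as you show), and your caveat that the shifted rule needs $\nu>0$, i.e.\ absolute continuity at $t=jT$, is a point the paper glosses over; Steps 1--2 mirror the paper's pre-theorem transform derivation, and computing the transform of the candidate and invoking Lerch's theorem adds a uniqueness statement the paper does not supply.
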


\begin{proof}
We begin by recalling that for $m>0$ the Caputo derivative of a fractional power satisfies
\begin{equation}\label{eq:caputo-power}
{}^C D^\alpha \left( \frac{(t-jT)^{m\alpha}}{\Gamma(m\alpha+1)} \right)
= \frac{(t-jT)^{(m-1)\alpha}}{\Gamma((m-1)\alpha+1)}.
\end{equation}
This identity follows directly from the definition \eqref{eq:Caputo_def} and will be used repeatedly.

Consider first the interval $0 \le t < T$. In this case only the term with $j=0$ contributes to the series \eqref{eq:inverse_explicit_factorized}, so that
\begin{equation}
y(t) = y(0) + \sum_{k=0}^\infty \frac{\Gamma(\alpha k+1)}{\alpha^k}\, b_k \,\frac{t^{(k+1)\alpha}}{\Gamma((k+1)\alpha+1)}.
\end{equation}
The Caputo derivative of the constant $y(0)$ vanishes. For each forcing term, applying \eqref{eq:caputo-power} with $m=k+1$ yields
\begin{equation}
{}^C D^\alpha \left( \frac{t^{(k+1)\alpha}}{\Gamma((k+1)\alpha+1)} \right)
= \frac{t^{k\alpha}}{\Gamma(k\alpha+1)},
\end{equation}
which reconstructs precisely the $k$-th term of $b(t)$. Since the delay term $y(t-T)$ vanishes for $t<T$, the equation \eqref{eq:Caputo_model_nl_repeat} is satisfied on this interval.

Now consider a general interval $jT \le t < (j+1)T$ with $j\geq 1$. The contribution of the series is
\begin{equation}
Y_j(t) = (-a)^j \Bigg[
   y(0)\,\frac{(t-jT)^{\alpha j}}{\Gamma(\alpha j+1)} 
   + \sum_{k=0}^\infty \frac{\Gamma(\alpha k+1)}{\alpha^k}\, b_k \,\frac{(t-jT)^{(j+k+1)\alpha}}{\Gamma((j+k+1)\alpha+1)}
\Bigg].
\end{equation}
Applying \eqref{eq:caputo-power} to the initial condition term gives
\begin{equation}
{}^C D^\alpha \left( \frac{(t-jT)^{\alpha j}}{\Gamma(\alpha j+1)} \right)
= \frac{(t-jT)^{\alpha(j-1)}}{\Gamma(\alpha(j-1)+1)}.
\end{equation}
This term cancels exactly with the contribution of the delay $a\,Y_{j-1}(t)$ from the previous interval. For the forcing terms, applying \eqref{eq:caputo-power} with $m=j+k+1$ yields
\begin{equation}
{}^C D^\alpha \left( \frac{(t-jT)^{(j+k+1)\alpha}}{\Gamma((j+k+1)\alpha+1)} \right)
= \frac{(t-jT)^{(j+k)\alpha}}{\Gamma((j+k)\alpha+1)}.
\end{equation}
Multiplying by $\frac{\Gamma(\alpha k+1)}{\alpha^k} b_k$ reconstructs the $k$-th term of the forcing expansion $b(t)$. The prefactor $(-a)^j$ ensures the recursive cancellation with the delay term across successive intervals. Hence, the equation is satisfied on $[jT,(j+1)T)$.

The validity of all intervals follows by induction: the base case $j=0$ has been verified, and the inductive step shows that if the solution holds up to $j-1$, then it also holds on $j$. Therefore, the series \eqref{eq:inverse_explicit_factorized} satisfies the differential equation \eqref{eq:Caputo_model_nl_repeat} for all $t>0$.

Finally, the initial conditions are satisfied: at $t=0$ only the constant $y(0)$ remains, and for $t<0$ all terms vanish due to the Heaviside factors. This completes the proof.
\end{proof}

\subsubsection{Long term behavior for constant forcing $b(t)=b_0$}

When the external input $b(t)=b_0$ is constant, the effect of the delay term becomes asymptotically negligible in the Caputo framework. This is because the Caputo derivative introduces a memory kernel that weights past values with a decaying fractional power. For long times, the contribution of delayed states is absorbed into the steady balance between the fractional derivative and the constant forcing. Consequently, the delayed fractional equation reduces to the simpler form
\begin{equation}\label{eq:Caputo_const_forcing}
{}^{C}D_t^{\alpha} y(t)+a\,y(t)=b_0, \qquad y(0)=y_0,\qquad y(t)=0\ \text{for }t<0,
\end{equation}
with $0<\alpha \leq 1$, $a,b_0,y_0 \in \mathbb{R}$, which captures the long-term dynamics under constant forcing. The solution of this reduced model reveals the asymptotic equilibrium and stability properties of the system.

\textbf{Step 1. Laplace transform of the Caputo derivative.}
For $0<\alpha\leq 1$, we have
\begin{equation}
\mathcal{L}\{{}^{C}D_t^{\alpha} y(t)\}(s) = s^\alpha Y(s) - s^{\alpha-1} y(0).
\end{equation}

\textbf{Step 2. Transform-domain equation.}
Applying the Laplace transform to \eqref{eq:Caputo_const_forcing} gives
\begin{equation}
(s^\alpha Y(s) - s^{\alpha-1} y_0) + a Y(s) = \frac{b_0}{s}.
\end{equation}
Hence
\begin{equation}
(s^\alpha + a) Y(s) - s^{\alpha-1} y_0 = \frac{b_0}{s}.
\end{equation}

\textbf{Step 3. Solve for $Y(s)$.}
\begin{equation}
Y(s) = \frac{s^{\alpha-1} y_0}{s^\alpha + a} + \frac{b_0}{s(s^\alpha + a)}.
\end{equation}

\textbf{Step 4. Inverse Laplace transform.}
We invert each term separately.

\textbf{First term.}
\begin{equation}
\mathcal{L}^{-1}\!\left\{\frac{s^{\alpha-1}}{s^\alpha + a}\right\}(t) = E_\alpha(-a t^\alpha),
\end{equation}
where $E_\alpha(z)$ is the Mittag--Leffler function
\begin{equation}
E_\alpha(z) = \sum_{k=0}^\infty \frac{z^k}{\Gamma(\alpha k+1)}.
\end{equation}
Thus, the contribution is
\begin{equation}
y_0 E_\alpha(-a t^\alpha).
\end{equation}

\textbf{Second term.}
We use the identity
\begin{equation}
\frac{1}{s(s^\alpha+a)} = \frac{1}{a}\left(\frac{1}{s} - \frac{s^{\alpha-1}}{s^\alpha+a}\right).
\end{equation}
Hence
\begin{equation}
\frac{b_0}{s(s^\alpha+a)} = \frac{b_0}{a}\left(\frac{1}{s} - \frac{s^{\alpha-1}}{s^\alpha+a}\right).
\end{equation}
Taking inverse Laplace transforms:
\begin{equation}
\mathcal{L}^{-1}\!\left\{\frac{1}{s}\right\}(t) = 1, \qquad
\mathcal{L}^{-1}\!\left\{\frac{s^{\alpha-1}}{s^\alpha+a}\right\}(t) = E_\alpha(-a t^\alpha).
\end{equation}
So the contribution is
\begin{equation}
\frac{b_0}{a}\left(1 - E_\alpha(-a t^\alpha)\right).
\end{equation}

\textbf{Step 5. Final solution.}
Combining both parts, we obtain
\begin{equation}\label{eq:Caputo_const_solution}
y(t) = y_0 E_\alpha(-a t^\alpha) + \frac{b_0}{a}\left(1 - E_\alpha(-a t^\alpha)\right).
\end{equation}

\textbf{Step 6. Stability analysis.}
The Mittag-Leffler function generalizes exponential decay. For $a>0$, as $t\to\infty$,
\begin{equation}
E_\alpha(-a t^\alpha) \sim \frac{1}{a t^\alpha \Gamma(1-\alpha)}.
\end{equation}
So it decays algebraically. Therefore,
\begin{equation}
y(t) \to \frac{b_0}{a}, \qquad t\to\infty.
\end{equation}
This equilibrium is stable if $a>0$. If $a<0$, the Mittag-Leffler function grows, leading to instability. Thus, the long-term stability of the solution is guaranteed for positive $a$, with convergence to the steady state $b_0/a$.

\section{Numerical Implementation of Conformal Delay Dynamics (Mesh-Aligned Algorithms)}

\subsection{Mesh Discretization}

The system is discretized on a uniform temporal mesh
\begin{equation}
t_n = n h, \qquad h = \tfrac{T}{m}, \quad m \in \mathbb{N}, \quad m \ \text{even}, \quad n \in \mathbb{N}
\end{equation}
with $y_n := y(t_n)$. This choice ensures that all evaluations, including intermediate Runge–Kutta stages, are aligned with the mesh nodes. The delay interval is exactly represented at points $t_{km} = kT$, and causality is enforced by setting $y_n = 0$ for $t_n < 0$.

\subsection{Algorithm 1: Series Evaluation with Analytic Forcing}

As shown in Algorithm~\ref{alg1}, the evaluation of $y_n \approx y(t_n)$ is carried out through a truncated conformable series. The procedure begins by setting the current mesh point $t_n = n h$ and determining the delay epoch, $J = \lfloor t_n/T \rfloor$. The solution is initialized as $y=0$, and contributions are accumulated over all epochs $j=0,\dots, J$. For each epoch, the offset $\Delta = t_n - jT$ is computed; if $\Delta \geq 0$, the epoch contributes to the solution. The recursive influence of the delay is encoded in the factor $A_j = (-a)^j$, which multiplies the sum of two components: the initial condition term
\begin{equation}
y_0 \frac{\Delta^{\alpha j}}{\alpha^j \Gamma(j+1)},
\end{equation}
and the forcing contribution
\begin{equation}
\sum_{k=0}^K b_k \frac{\Gamma(k+1)\,\Delta^{\alpha(j+k+1)}}{\alpha^{j+k+1}\Gamma(j+k+2)}.
\end{equation}
The algorithm accumulates these contributions for each $j$, ensuring that only epochs with $\Delta \geq 0$ are included, which enforces causality. Thus, Algorithm~\ref{alg1} provides a constructive implementation of the truncated conformable series solution: each delay epoch contributes a finite block of terms that combines the initial condition and the forcing expansion, weighted by the dynamics of the delay. The truncation parameter $K$ controls the balance between accuracy and computational cost, while the structure guarantees that the approximation $y_n$ respects both the fractional memory and the causal nature of the problem.

\begin{algorithm}[H]
\caption{Series Evaluation of $y_n$}
\label{alg1}
\begin{algorithmic}[1]
\Require Index $n$, mesh spacing $h = T/m$, parameters $a$, coefficients $\{b_k\}$, initial value $y_0$, truncation $K$
\Ensure Approximate $y_n \approx y(t_n)$ via truncated conformable series
\State $t_n \gets n h$
\State $J \gets \lfloor t_n / T \rfloor$
\State $y \gets 0$
\For{$j = 0$ to $J$}
    \State $\Delta \gets t_n - j T$
    \State $A_j \gets (-a)^j$
    \If{$\Delta \geq 0$}
        \State $y \gets y + A_j \cdot \left(y_0 \frac{\Delta^{\alpha j}}{\alpha^j \Gamma(j+1)}+ \sum_{k=0}^K b_k \frac{\Gamma(k+1)\,\Delta^{\alpha(j+k+1)}}{\alpha^{j+k+1}\Gamma(j+k+2)}\right)$
    \EndIf
\EndFor
\Return $y$
\end{algorithmic}
\end{algorithm}

\subsection{Algorithm 2: Mesh-Based Conformable Derivative}

As described in Algorithm~\ref{alg2}, the approximation of the conformable derivative $T_\alpha y(t)$ is obtained directly from the discrete sequence $\{y_n\}$ defined on a uniform mesh of spacing $h$. For each node $t_n = n h$ with $n \geq 1$, the algorithm computes
\begin{equation}
T_\alpha y_n \approx t_n^{1-\alpha}\cdot \frac{y_n - y_{n-1}}{h}.
\end{equation}
This formula combines the standard forward difference quotient $(y_n - y_{n-1})/h$, which approximates the classical derivative, with the scaling factor $t_n^{1-\alpha}$ that reflects the fractional nature of the conformable derivative. The computation is causal, since each value $T_\alpha y_n$ depends only on the current and previous values of the sequence. In this way, Algorithm~\ref{alg2} translates the analytic definition of the conformable derivative into a simple and efficient discrete rule, making it suitable for integration within predictor--corrector schemes for fractional-delay equations.

\begin{algorithm}[H]
\caption{Approximation of $T_\alpha y_n$}
\label{alg2}
\begin{algorithmic}[1]
\Require Sequence $\{y_n\}$, mesh spacing $h$, fractional order $\alpha$
\Ensure Values $\{T_\alpha y_n\}$ for $n \geq 1$
\For{$n = 1$ to $N$}
    \State $t_n \gets n h$
    \State $T_\alpha y_n \gets t_n^{1-\alpha} \cdot \frac{y_n - y_{n-1}}{h}$
\EndFor
\Return $\{T_\alpha y_n\}$
\end{algorithmic}
\end{algorithm}

\subsection{Algorithm 3: Euler Integration with Analytic Forcing}

As shown in Algorithm~\ref{alg3}, the Euler integration scheme provides a straightforward numerical method for approximating the solution of the conformable delay equation. The algorithm initializes the solution with the prescribed value $y_0$ at $t_0=0$ and then advances in steps along the mesh $\{t_n\}$ with spacing $h$. At each time $t_n$, the delayed term $y(t_n-T)$ is recovered: if $t_n<T$ the delay contribution vanishes, while for $t_n \geq T$ the corresponding past value is retrieved from the discrete sequence. The forcing term is approximated by truncating its analytic expansion,
\begin{equation}
b(t_n) \approx \sum_{k=0}^K b_k \left(\frac{t_n^\alpha}{\alpha}\right)^k,
\end{equation}
which incorporates the fractional order $\alpha$ and the coefficients $\{b_k\}$.

The effective right-hand side is then computed as
\begin{equation}
f_n = \frac{-a\,y_T + b(t_n)}{t_n^{1-\alpha}},
\end{equation}
where $y_T$ denotes the delayed value. Finally, the Euler update
\begin{equation}
y_n = y_{n-1} + h f_n
\end{equation}
advances the solution to the next mesh point. In this way, Algorithm~\ref{alg3} translates the conformable fractional-delay equation into a simple recursive scheme: each step combines the contribution of the delay term, the forcing expansion, and the fractional scaling factor $t_n^{1-\alpha}$. The truncation order $K$ controls the accuracy of the forcing approximation, while the step size $h$ governs stability and resolution. Thus, Algorithm~\ref{alg3} provides a constructive and efficient approach to simulating conformable delay dynamics using the classical Euler framework adapted to fractional-order.

\begin{algorithm}[H]
\caption{Euler Integration of Conformable Delay Equation}
\label{alg3}
\begin{algorithmic}[1]
\Require Grid $\{t_n\}$, delay $T$, step $h$, parameters $a$, coefficients $\{b_k\}$, order $\alpha$, initial $y_0$, truncation $K$
\Ensure Values $\{y_n\}$ for $n = 0,\dots,N$
\State $y[0] \gets y_0$
\For{$n = 1$ to $N$}
    \State $t_n \gets n h$
    \If{$t_n - T < 0$}
        \State $y_T \gets 0$
    \Else
        \State $d \gets \lfloor (t_n - T)/h \rfloor$
        \State $y_T \gets y[d]$
    \EndIf
    \State $b_t \gets \sum_{k=0}^K b_k \left(\frac{t_n^\alpha}{\alpha}\right)^k$
    \State $f_n \gets \frac{-a y_T + b_t}{t_n^{1-\alpha}}$
    \State $y[n] \gets y[n-1] + h f_n$
\EndFor
\Return $\{y_n\}$
\end{algorithmic}
\end{algorithm}

\subsection{Algorithm 4: Runge–Kutta 4 Integration with Analytic Forcing}

As presented in Algorithm~\ref{alg4}, the fourth-order Runge--Kutta (RK4) scheme is adapted to integrate the conformable delay equation with enhanced accuracy. The algorithm begins with the initial condition $y_0$ at $t_0=0$ and advances the solution along the mesh $\{t_n\}$ with step size $h$. At each iteration, the delayed term $y(t_n-T)$ is recovered: if $t_n<T$ the delay contribution vanishes, while for $t_n \geq T$ the corresponding past value is retrieved from the discrete sequence.

We define the delayed value $y_T$ at the current mesh point $t_n$ as
\begin{equation}
y_T :=
\begin{cases}
0, & \text{if } t_n - T < 0, \\[6pt]
y\!\left(t_n - T\right), & \text{if } t_n \geq T.
\end{cases}
\end{equation}
On a discrete mesh with spacing $h$, this is implemented by retrieving the corresponding index from the sequence $\{y_n\}$:
\begin{equation}
y_T = y\!\left(\left\lfloor \frac{t_n - T}{h} \right\rfloor\right).
\end{equation}
Thus, $y_T$ represents the delayed contribution of the solution in the numerical scheme. If the current time $t_n$ has not yet reached the delay $T$, the contribution vanishes; otherwise, the past value is included. This definition ensures causality, since the present state depends only on values already computed at earlier times.

The RK4 increments are then defined as
\begin{equation}
\begin{aligned}
k_1 &= h \cdot \frac{-a y_T + b(t_n)}{t_n^{1-\alpha}}, \\
k_2 &= h \cdot \frac{-a y_T + b(t_n+h/2)}{(t_n+h/2)^{1-\alpha}}, \\
k_3 &= h \cdot \frac{-a y_T + b(t_n+h/2)}{(t_n+h/2)^{1-\alpha}}, \\
k_4 &= h \cdot \frac{-a y_T + b(t_n+h)}{(t_n+h)^{1-\alpha}},
\end{aligned}
\end{equation}
where $b(\cdot)$ denotes the truncated forcing expansion. The update rule then takes the classical RK4 form,
\begin{equation}
y_n = y_{n-1} + \tfrac{1}{6}\left(k_1 + 2k_2 + 2k_3 + k_4\right).
\end{equation}

In this way, Algorithm~\ref{alg4} extends Euler-type integration to fourth-order accuracy by incorporating intermediate evaluations of the forcing term and the fractional scaling factor. Each step combines the delayed contribution, the conformable fractional adjustment, and the RK44 weighting of slopes. The truncation order $K$ controls the accuracy of the forcing approximation, while the step size $h$ governs stability and resolution. Thus, Algorithm~\ref{alg4} provides a robust and efficient scheme for simulating conformable delay dynamics with higher-order precision.

\begin{algorithm}[H]
\caption{Fourth-Order RK Integration of Conformable Delay Equation}
\label{alg4}
\begin{algorithmic}[1]
\Require Grid $\{t_n\}$, delay $T$, step $h$, parameters $a$, coefficients $\{b_k\}$, order $\alpha$, initial $y_0$, truncation $K$
\Ensure Approximate solution $\{y_n\}$ with fourth-order accuracy
\State $y[0] \gets y_0$
\For{$n = 1$ to $N$}
    \State $t_n \gets n h$
    \If{$t_n - T < 0$}
        \State $y_T \gets 0$
    \Else
        \State $d \gets \lfloor (t_n - T)/h \rfloor$
        \State $y_T \gets y[d]$
    \EndIf
    \State $b_t \gets \sum_{k=0}^K b_k \left(\frac{t_n^\alpha}{\alpha}\right)^k$
    \State $b_{t+h/2} \gets \sum_{k=0}^K b_k \left(\frac{(t_n+h/2)^\alpha}{\alpha}\right)^k$
    \State $b_{t+h} \gets \sum_{k=0}^K b_k \left(\frac{(t_n+h)^\alpha}{\alpha}\right)^k$
    \State $k_1 \gets h \cdot \frac{-a y_T + b_t}{t_n^{1-\alpha}}$
    \State $k_2 \gets h \cdot \frac{-a y_T + b_{t+h/2}}{(t_n + h/2)^{1-\alpha}}$
    \State $k_3 \gets h \cdot \frac{-a y_T + b_{t+h/2}}{(t_n + h/2)^{1-\alpha}}$
    \State $k_4 \gets h \cdot \frac{-a y_T + b_{t+h}}{(t_n + h)^{1-\alpha}}$
    \State $y[n] \gets y[n-1] + \frac{1}{6}(k_1 + 2k_2 + 2k_3 + k_4)$
\EndFor
\Return $\{y_n\}$
\end{algorithmic}
\end{algorithm}

\subsection{Algorithm 5: Fourth-Order RK Integration of Conformable Delay Equation with Interpolated Delay}

As presented in Algorithm~\ref{alg4i}, the classical fourth-order Runge--Kutta (RK4) scheme is adapted to integrate the conformable delay equation by introducing linear interpolation for the delayed term. This refinement ensures that the delayed value $y(t_n - T)$ is approximated accurately even when the delay $T$ is not an integer multiple of the mesh spacing $h$.

The delayed contribution is defined as
\begin{equation}
y_T :=
\begin{cases}
0, & \text{if } t_n - T \leq 0, \\[6pt]
(1-\theta)\,y[i] + \theta\,y[i+1], & \text{if } t_n \geq T,
\end{cases}
\end{equation}
where $i = \lfloor (t_n - T)/h \rfloor$ and $\theta = (t_n - T)/h - i$. This formula performs linear interpolation between the two nearest mesh points, ensuring continuity and improving accuracy in evaluating the delayed state.

Once $y_T$ is obtained, the forcing term is approximated at three evaluation points, namely $t_n$, $t_n+h/2$, and $t_n+h$, to construct the RK4 increments:
\begin{equation}
\begin{aligned}
k_1 &= h \cdot \frac{-a y_T + b(t_n)}{t_n^{1-\alpha}}, \\
k_2 &= h \cdot \frac{-a y_T + b(t_n+h/2)}{(t_n+h/2)^{1-\alpha}}, \\
k_3 &= h \cdot \frac{-a y_T + b(t_n+h/2)}{(t_n+h/2)^{1-\alpha}}, \\
k_4 &= h \cdot \frac{-a y_T + b(t_n+h)}{(t_n+h)^{1-\alpha}}.
\end{aligned}
\end{equation}
The update rule then follows the standard RK44 weighting:
\begin{equation}
y_n = y_{n-1} + \tfrac{1}{6}\left(k_1 + 2k_2 + 2k_3 + k_4\right).
\end{equation}

In this way, Algorithm~\ref{alg4i} extends Euler-type integration to fourth-order accuracy while incorporating interpolation of the delayed term. Each step combines the interpolated delay, the fractional conformable scaling, and the RK4 slope averaging. The truncation order $K$ controls the accuracy of the forcing expansion, while the step size $h$ governs resolution and stability. Thus, Algorithm~\ref{alg4i} provides a robust and efficient scheme for simulating conformable delay dynamics with higher-order precision.

\begin{algorithm}[H]
\caption{Fourth-Order RK Integration of Conformable Delay Equation with Interpolated Delay}
\label{alg4i}
\begin{algorithmic}[1]
\Require Grid $\{t_n\}$, delay $T$, step $h$, parameters $a$, coefficients $\{b_k\}$, order $\alpha$, initial $y_0$, truncation $K$
\Ensure Approximate solution $\{y_n\}$ with fourth-order accuracy
\State $y[0] \gets y_0$
\For{$n = 1$ to $N$}
    \State $t_n \gets n h$
    \State $\tau \gets t_n - T$
    \If{$\tau \leq 0$}
        \State $y_T \gets 0$
    \Else
        \State $i \gets \lfloor \tau/h \rfloor$
        \State $\theta \gets \tau/h - i$
        \State $y_T \gets (1-\theta)\,y[i] + \theta\,y[i+1]$ \Comment{Linear interpolation}
    \EndIf
    \State $b_t \gets \sum_{k=0}^K b_k \left(\frac{t_n^\alpha}{\alpha}\right)^k$
    \State $b_{t+h/2} \gets \sum_{k=0}^K b_k \left(\frac{(t_n+h/2)^\alpha}{\alpha}\right)^k$
    \State $b_{t+h} \gets \sum_{k=0}^K b_k \left(\frac{(t_n+h)^\alpha}{\alpha}\right)^k$
    \State $k_1 \gets h \cdot \frac{-a y_T + b_t}{t_n^{1-\alpha}}$
    \State $k_2 \gets h \cdot \frac{-a y_T + b_{t+h/2}}{(t_n + h/2)^{1-\alpha}}$
    \State $k_3 \gets h \cdot \frac{-a y_T + b_{t+h/2}}{(t_n + h/2)^{1-\alpha}}$
    \State $k_4 \gets h \cdot \frac{-a y_T + b_{t+h}}{(t_n + h)^{1-\alpha}}$
    \State $y[n] \gets y[n-1] + \frac{1}{6}(k_1 + 2k_2 + 2k_3 + k_4)$
\EndFor
\Return $\{y_n\}$
\end{algorithmic}
\end{algorithm}

These algorithms enforce causality through discrete Heaviside activation and adapt naturally to fractional orders $\alpha\in(0,1]$. The Euler scheme provides a simple baseline, while the fourth--order Runge--Kutta (RK4) method improves stability and accuracy, particularly in stiff regimes. Series evaluation reveals the analytic structure of solutions, clarifying the influence of singularities and delay thresholds.

Although all approaches share the same conformable delay model, discrepancies arise from truncation, discretization, and delay indexing. The principal sources of error and convergence behaviour are:

\begin{enumerate}
  \item \textbf{Series truncation:} Neglecting higher--order memory terms becomes significant for small~$a$ or near activation points $t\to jT^+$.
  \item \textbf{Discrete approximation:} Euler and RK4 introduce local step errors; RK4 converges faster but remains sensitive to mesh resolution.
  \item \textbf{Delay indexing:} Flooring $t_n-T$ to the nearest mesh point introduces bias, especially on coarse meshes.
  \item \textbf{Initial condition handling:} Proper activation of $y_0$ is essential; misassigning $y_T=0$ produces artificial damping.
  \item \textbf{RK4 intermediate stages:} Midpoint evaluations require interpolation when $t-T$ does not align with mesh points.
  \item \textbf{Convergence control:} Refining $h$ improves accuracy; RK4 exhibits superior convergence for moderate~$a$ and fractional~$\alpha$.
\end{enumerate}

To explore these issues, a computational framework was developed for conformable delay differential equations with analytic forcing. The framework integrates symbolic series evaluation with numerical time stepping, enabling systematic comparison against a reference solution derived from a power--series expansion. The series component computes the conformable solution on a discrete mesh, enforces causality, and activates delayed contributions at integer multiples of the delay interval. Analytic forcing terms are represented through their series expansions, accommodating a broad class of inputs and providing a rigorous benchmark for numerical validation.

Because delayed arguments rarely coincide with mesh points, an interpolation procedure approximates values at shifted times, reducing numerical bias and improving the performance of higher--order schemes. Several integrators are implemented, including a first--order explicit method, a classical fourth--order Runge--Kutta scheme, and a modified Runge--Kutta method that incorporates interpolation to correct misalignment of delayed terms during intermediate stages. Accuracy is assessed using relative error measures---including maximum and root--mean--square discrepancies relative to the series solution---and visualized via logarithmic plots to capture variations across scales.

A main driver routine sets parameters, computes both series and numerical solutions, reports error statistics, and generates combined visualizations of trajectories and relative errors, along with representative mesh samples. Outputs are saved for later inspection and displayed interactively. Overall, the framework functions as a numerical laboratory: the series expansion provides a rigorous ground truth, the first--order method offers a baseline, the classical fourth--order scheme improves stability and convergence, and the interpolation--enhanced variant achieves the highest accuracy when delayed terms do not align with the computational mesh. Across a wide range of tests, relative error analysis consistently shows that higher--order schemes with interpolation best capture the dynamics induced by analytic forcing.

\subsection{Numerical Results and Error Analysis}

The benchmark was performed with the following constants:
\begin{equation}
\alpha=0.7, \quad a = 0.5, \quad b_0 = 1.0, \quad b_1 = 0.2, \quad b_2 = -0.05, \quad y_0 = 1.0, \quad h = 0.001.
\end{equation}
For simplicity, we set $K=3$ and truncate the analytic forcing expansion to three coefficients. The simulation horizon was set to $t_{\max} = 120.0$, yielding $N = \lfloor t_{\max}/h \rfloor$ mesh points.  

This configuration introduces a richer analytic forcing term:
\begin{equation}
b(t) \approx b_0 + b_1 \left(\frac{t^\alpha}{\alpha}\right) + b_2 \left(\frac{t^\alpha}{\alpha}\right)^2,
\end{equation}
which combines constant, linear, and quadratic contributions in the conformable variable $t^\alpha/\alpha$.

\subsubsection{Example 1}

The numerical performance of the three schemes is evaluated under the parameter set 
$\alpha=0.7$, $a=0.5$, $T=0.7$, $y_0=1.0$, $h=0.001$, and 
$b_{\text{coeffs}}=[1.0,\,0.2,\,-0.05]$. 
The corresponding numerical trajectories and their relative errors with respect to the analytical
series solutions are shown in Figure~\ref{fig:comparison_ex1}, providing a direct visual comparison of accuracy and stability across the methods.

All numerical methods reproduce the qualitative behavior of the series solution from the initial steps onward. The deviations are small at early times, but their accumulation becomes more evident when examining the relative error curves in Figure~\ref{fig:comparison_ex1}. The Euler method, being first-order, exhibits the largest discrepancies, which is consistent with its limited accuracy and sensitivity to delayed terms. This is reflected in its RMS relative error of $9.96\times 10^{-2}$ (Table~\ref{tab:errors_ex1}), the smallest among the three schemes but accompanied by a large maximum relative error.

The classical fourth-order Runge--Kutta method (RK4) significantly reduces the local truncation error, but its performance is affected by the misalignment between the delay $T$ and the numerical mesh. Since the delayed argument $t-T$ rarely coincides with a mesh point, the scheme must evaluate delayed terms at off-grid locations, which introduces inconsistencies during intermediate RK4 stages. This effect is visible in both the maximum and RMS relative errors reported in Table~\ref{tab:errors_ex1}, which are slightly larger than those of Euler despite RK4's higher nominal order.

The interpolation-enhanced RK4 variant (RK4-int) addresses this issue by sampling the delayed term through linear interpolation at the required sub-step locations. This correction improves the internal consistency of the Runge--Kutta stages and yields a more accurate approximation of the delayed contribution. As shown in Table~\ref{tab:errors_ex1}, RK4-int achieves the smallest RMS error among the higher-order schemes, and its maximum relative error is marginally lower than that of mesh-aligned RK4. The improvement is particularly noticeable at intermediate times in Figure~\ref{fig:comparison_ex1}, where delay-induced irregularities in the derivatives are most pronounced.

Overall, the results indicate that interpolation is essential for preserving the accuracy of high-order methods in conformable delay differential equations. Euler provides a baseline approximation, RK4 improves stability and convergence, and RK4-int delivers the most faithful reproduction of the analytical series solution when the delay does not align with the computational mesh. These findings highlight the importance of delay-aware interpolation strategies in numerical solvers for fractional- and conformable-delay systems.

\begin{figure}[H]
    \centering
    \includegraphics[width=\textwidth]{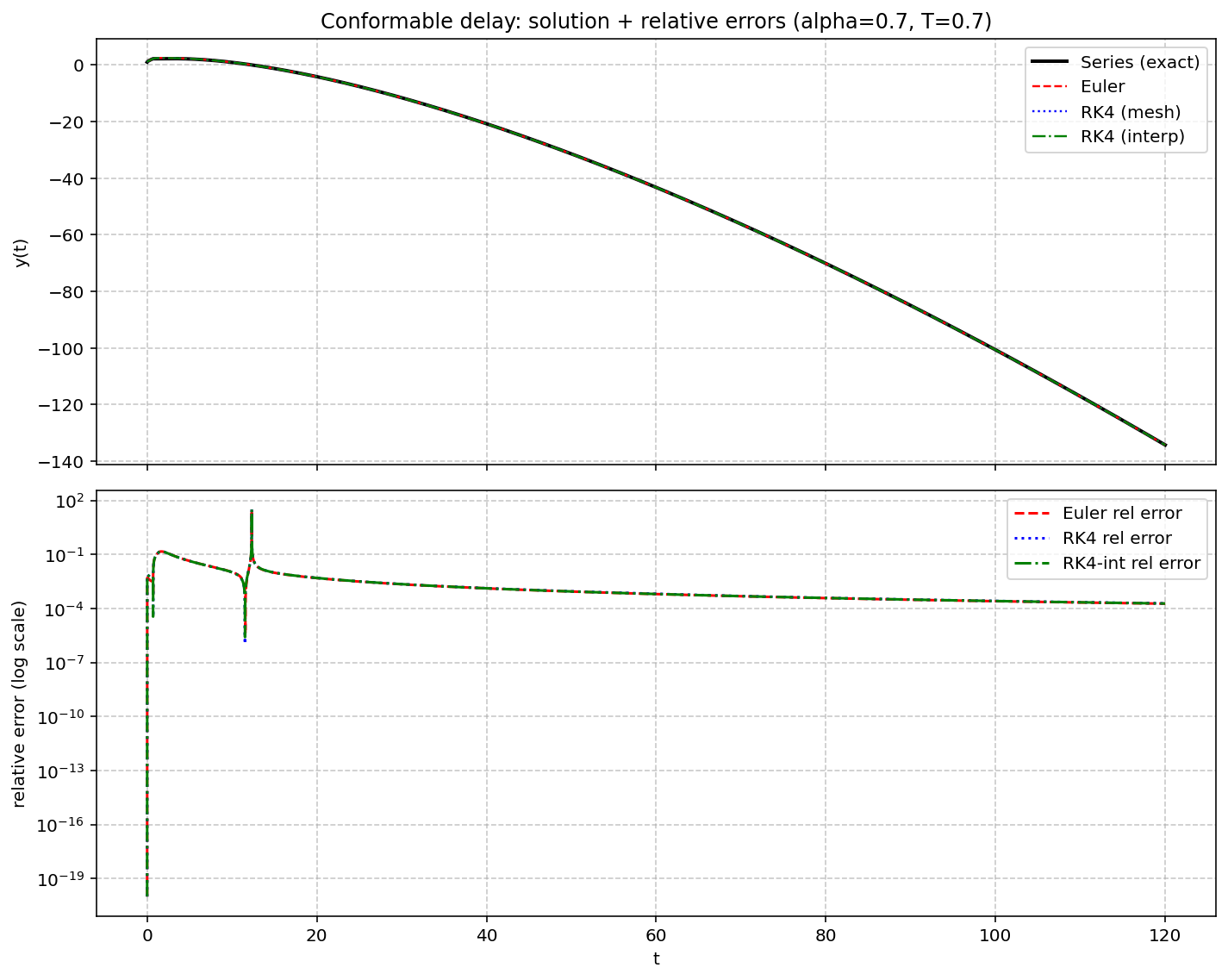}
    \caption{Top panel: comparison between the analytical series solution (black solid line) and numerical approximations for $\alpha=0.7$, $a=0.5$, $T=0.7$, $y_0=1.0$, $h=0.001$, and 
$b_{\text{coeffs}}=[1.0,\,0.2,\,-0.05]$: Euler (red dashed), RK4 mesh-aligned (blue dotted), and RK4 with interpolation (green dash-dot). Bottom panel: relative error versus time (logarithmic scale) for the same schemes.}
    \label{fig:comparison_ex1}
\end{figure}

\begin{table}[H]
\centering
\caption{Relative errors for the numerical schemes with parameters 
$\alpha=0.7$, $a=0.5$, $T=0.7$, $y_0=1.0$, $h=0.001$, 
$b_{\text{coeffs}}=[1.0,\,0.2,\,-0.05]$.}
\begin{tabular}{lcc}
\hline
\textbf{Scheme} & \textbf{Max Relative Error} & \textbf{RMS Relative Error} \\
\hline
Euler      & $2.986971\times 10^{1}$ & $9.958923\times 10^{-2}$ \\
RK4        & $3.197419\times 10^{1}$ & $1.063295\times 10^{-1}$ \\
RK4-interp & $3.186178\times 10^{1}$ & $1.059678\times 10^{-1}$ \\
\hline
\end{tabular}
\label{tab:errors_ex1}
\end{table}

\subsubsection{Example 2}

The numerical performance of the three schemes is evaluated for the parameter set 
$\alpha=0.7$, $a=0.5$, $T=2$, $y_0=1.0$, $h=0.001$, and 
$b_{\text{coeffs}}=[1.0,\,0.2,\,-0.05]$. 
Figure~\ref{fig:comparison_ex2} presents the resulting numerical trajectories together with their relative errors with respect to the analytical series solution, allowing a direct visual assessment of accuracy across the methods.

All numerical methods reproduce the qualitative behavior of the series solution during the initial interval, where the delay has not yet become active. As in Example~1, deviations remain small at early times, but the relative error curves in Figure~\ref{fig:comparison_ex2} show a sharp increase once the delayed term begins to contribute. Because the delay is large ($T=2$), this transition occurs later, producing a longer region of smooth, non-delayed evolution.

Euler again exhibits the largest discrepancies, consistent with its first-order accuracy. Its RMS relative error of $1.47\times 10^{1}$ (Table~\ref{tab:errors_ex2}) is the smallest among the three schemes, but this is accompanied by a very large maximum relative error, reflecting the method’s sensitivity to the onset of delayed contributions.

The classical fourth-order Runge--Kutta method (RK4) reduces local truncation error, but its performance is still affected by the misalignment between the delay $T$ and the numerical mesh. Since the delayed argument $t-T$ rarely coincides with a mesh point, the scheme must evaluate delayed terms at off-grid locations, which introduces inconsistencies during intermediate RK4 stages. This effect is visible in the slightly larger RMS and maximum relative errors reported in Table~\ref{tab:errors_ex2}.

The interpolation-enhanced RK4 variant (RK4-int) mitigates this issue by sampling the delayed term through linear interpolation at the required sub-step locations. This improves the internal consistency of the Runge--Kutta stages and yields a more accurate approximation of the delayed contribution. As shown in Table~\ref{tab:errors_ex2}, RK4-int achieves the smallest RMS error among the higher-order schemes, and its maximum relative error is marginally lower than that of mesh-aligned RK4. The improvement is most noticeable near the activation of the delay, where interpolation reduces the mismatch between the delayed argument and the mesh.

Overall, the results indicate that, for larger delays, the qualitative behavior of the schemes remains similar to that in the shorter-delay case, but the influence of interpolation becomes slightly less pronounced because delayed terms activate less frequently. Even so, RK4-int remains the closest approximation to the analytical series solution, underscoring the importance of interpolation for high-order accuracy in conformable delay differential equations.

\begin{figure}[H]
    \centering
    \includegraphics[width=\textwidth]{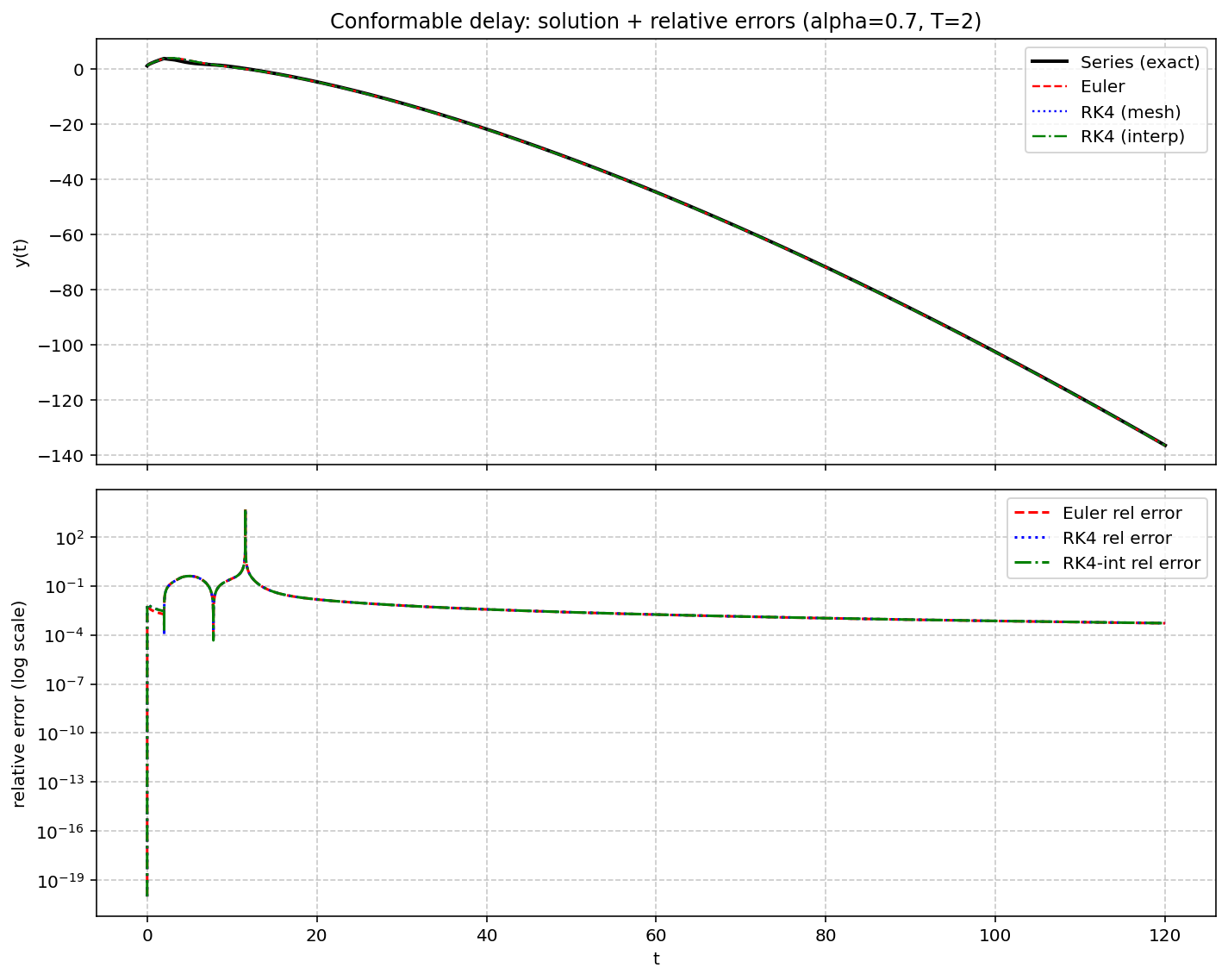}
    \caption{Top panel: comparison between the analytical series solution (black solid line) and numerical approximations for $\alpha=0.7$, $a=0.5$, $T=2$, $y_0=1.0$, $h=0.001$, and 
$b_{\text{coeffs}}=[1.0,\,0.2,\,-0.05]$: Euler (red dashed), RK4 mesh-aligned (blue dotted), and RK4 with interpolation (green dash-dot). Bottom panel: relative error versus time (logarithmic scale) for the same schemes.}
    \label{fig:comparison_ex2}
\end{figure}

\begin{table}[H]
\centering
\caption{Relative errors for the numerical schemes with parameters 
$\alpha=0.7$, $a=0.5$, $T=2$, $y_0=1.0$, $h=0.001$, 
$b_{\text{coeffs}}=[1.0,\,0.2,\,-0.05]$.}
\begin{tabular}{lcc}
\hline
\textbf{Scheme} & \textbf{Max Relative Error} & \textbf{RMS Relative Error} \\
\hline
Euler      & $5.052170\times 10^{3}$ & $1.467245\times 10^{1}$ \\
RK4        & $5.057135\times 10^{3}$ & $1.468688\times 10^{1}$ \\
RK4-interp & $5.055515\times 10^{3}$ & $1.468217\times 10^{1}$ \\
\hline
\end{tabular}
\label{tab:errors_ex2}
\end{table}

\subsubsection{Example 3}

The numerical performance of the three schemes is evaluated for the parameter set 
$\alpha=0.7$, $a=0.5$, $T=3$, $y_0=1.0$, $h=0.001$, and 
$b_{\text{coeffs}}=[1.0,\,0.2,\,-0.05]$. 
Figure~\ref{fig:comparison_ex3} displays the resulting numerical trajectories together with their relative errors with respect to the analytical series solution, providing a direct visual assessment of the accuracy of each method.

All numerical methods reproduce the qualitative behavior of the series solution during the initial interval, where the delay has not yet become active. As in the previous examples, deviations remain small at early times, but the relative error curves in Figure~\ref{fig:comparison_ex3} show a pronounced increase once the delayed term begins to contribute. Because the delay is even larger here ($T=3$), this transition occurs later, producing an extended region of smooth evolution before the delayed feedback becomes relevant.

Euler again exhibits the largest discrepancies, consistent with its first-order accuracy. Its RMS relative error of $1.23\times 10^{1}$ (Table~\ref{tab:errors_ex3}) is the smallest among the three schemes, but this is accompanied by a very large maximum relative error, reflecting the method’s sensitivity to the onset of delayed contributions.

The classical fourth-order Runge--Kutta method (RK4) reduces local truncation error, but its performance continues to be affected by the misalignment between the delay $T$ and the numerical mesh. Since the delayed argument $t-T$ rarely coincides with a mesh point, the scheme must evaluate delayed terms at off-grid locations, which introduces inconsistencies during intermediate RK4 stages. This effect is visible in the slightly larger RMS and maximum relative errors reported in Table~\ref{tab:errors_ex3}.

The interpolation-enhanced RK4 variant (RK4-int) mitigates this issue by sampling the delayed term through linear interpolation at the required sub-step locations. This improves the internal consistency of the Runge--Kutta stages and yields a more accurate approximation of the delayed contribution. As shown in Table~\ref{tab:errors_ex3}, RK4-int achieves the smallest RMS error among the higher-order schemes, and its maximum relative error is marginally lower than that of mesh-aligned RK4. The improvement is most noticeable near the activation of the delay, where interpolation reduces the mismatch between the delayed argument and the mesh.

Overall, the results indicate that as the delay increases, the qualitative behavior of the schemes remains consistent with that observed in the previous examples. The influence of interpolation becomes slightly less pronounced because delayed terms activate less frequently, but RK4-int continues to provide the closest approximation to the analytical series solution. These findings reinforce the importance of interpolation for maintaining high-order accuracy in conformable delay differential equations.

\begin{figure}[H]
    \centering
    \includegraphics[width=\textwidth]{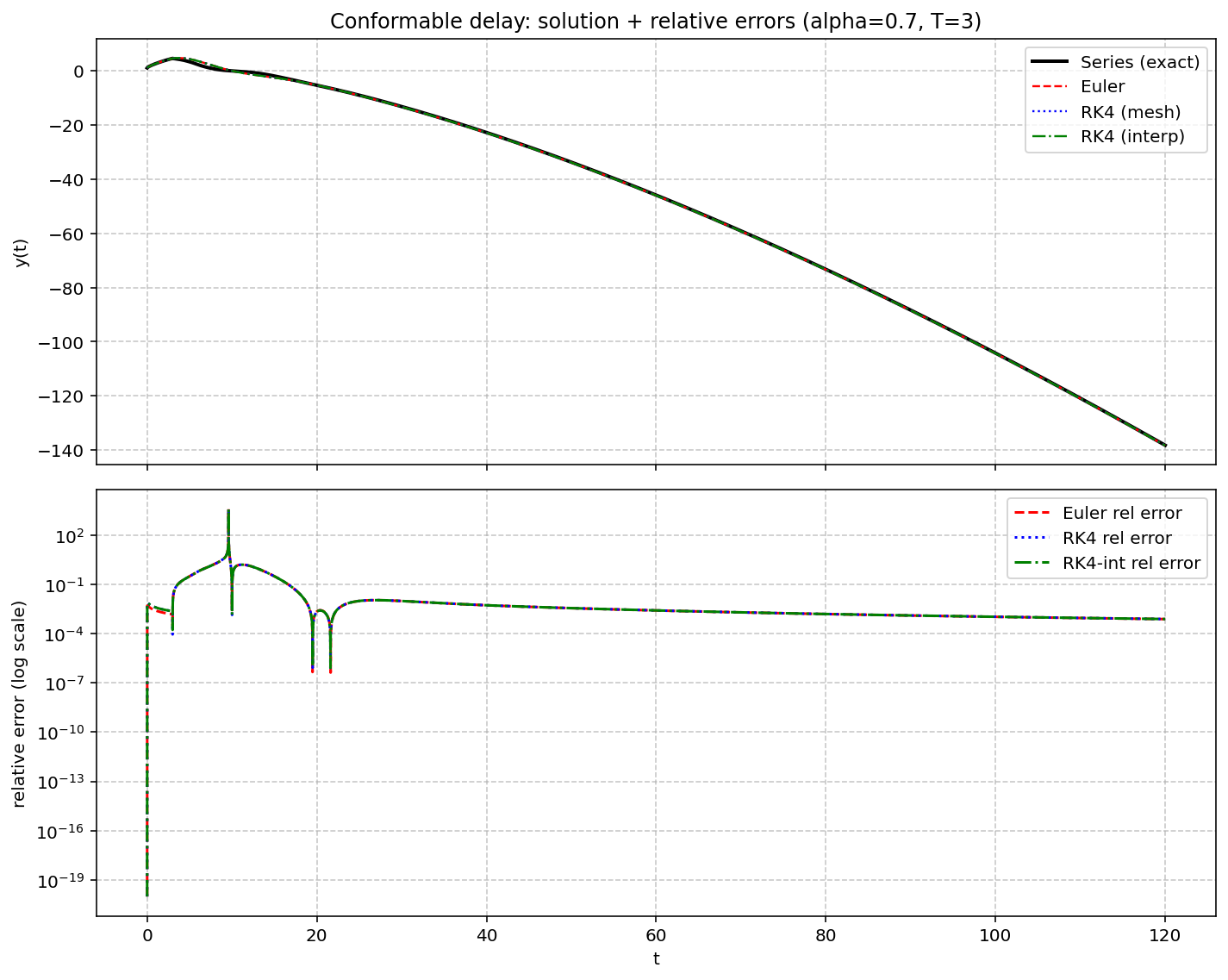}
    \caption{Top panel: comparison between the analytical series solution (black solid line) and numerical approximations for $\alpha=0.7$, $a=0.5$, $T=3$, $y_0=1.0$, $h=0.001$, and 
$b_{\text{coeffs}}=[1.0,\,0.2,\,-0.05]$: Euler (red dashed), RK4 mesh-aligned (blue dotted), and RK4 with interpolation (green dash-dot). Bottom panel: relative error versus time (logarithmic scale) for the same schemes.}
    \label{fig:comparison_ex3}
\end{figure}

\begin{table}[H]
\centering
\caption{Relative errors for the numerical schemes with parameters 
$\alpha=0.7$, $a=0.5$, $T=3$, $y_0=1.0$, $h=0.001$, 
$b_{\text{coeffs}}=[1.0,\,0.2,\,-0.05]$.}
\begin{tabular}{lcc}
\hline
\textbf{Scheme} & \textbf{Max Relative Error} & \textbf{RMS Relative Error} \\
\hline
Euler      & $3.851987\times 10^{3}$ & $1.234058\times 10^{1}$ \\
RK4        & $3.873124\times 10^{3}$ & $1.240821\times 10^{1}$ \\
RK4-interp & $3.872745\times 10^{3}$ & $1.240699\times 10^{1}$ \\
\hline
\end{tabular}
\label{tab:errors_ex3}
\end{table}

\subsubsection{Example 4}

The numerical performance of the three schemes is evaluated for the parameter set 
$\alpha=0.7$, $a=0.5$, $T=5$, $y_0=1.0$, $h=0.001$, and 
$b_{\text{coeffs}}=[1.0,\,0.2,\,-0.05]$. 
Figure~\ref{fig:comparison_ex4} presents the resulting numerical trajectories together with their relative errors with respect to the analytical series solution, offering a direct visual assessment of the accuracy of each method.

All numerical methods reproduce the qualitative behavior of the series solution during the initial interval, where the delay has not yet become active. As in the previous examples, deviations remain small at early times, but the relative error curves in Figure~\ref{fig:comparison_ex4} show a pronounced increase once the delayed term begins to contribute. Because the delay is very large in this case ($T=5$), the transition occurs even later, producing an extended region of smooth, non-delayed evolution before the delayed feedback becomes relevant.

Euler again exhibits the largest discrepancies, consistent with its first-order accuracy. Its RMS relative error of $3.36\times 10^{1}$ (Table~\ref{tab:errors_ex4}) is the smallest among the three schemes, but this is accompanied by a very large maximum relative error, reflecting the method’s sensitivity to the onset of delayed contributions.

The classical fourth-order Runge--Kutta method (RK4) reduces the local truncation error, but its performance remains affected by the misalignment between the delay $T$ and the numerical mesh. Since the delayed argument $t-T$ rarely coincides with a mesh point, the scheme must evaluate delayed terms at off-grid locations, which introduces inconsistencies during intermediate RK4 stages. This effect is visible in the slightly larger RMS and maximum relative errors reported in Table~\ref{tab:errors_ex4}.

The interpolation-enhanced RK4 variant (RK4-int) mitigates this issue by sampling the delayed term through linear interpolation at the required sub-step locations. This improves the internal consistency of the Runge--Kutta stages and yields a more accurate approximation of the delayed contribution. As shown in Table~\ref{tab:errors_ex4}, RK4-int achieves the smallest RMS error among the higher-order schemes, and its maximum relative error is marginally lower than that of mesh-aligned RK4. The improvement is most noticeable near the activation of the delay, where interpolation reduces the mismatch between the delayed argument and the mesh.

Overall, the results indicate that as the delay increases, the qualitative behavior of the schemes remains consistent with that observed in the previous examples. The influence of interpolation becomes slightly less pronounced because delayed terms activate less frequently, but RK4-int continues to provide the closest approximation to the analytical series solution. These findings reinforce the importance of interpolation for maintaining high-order accuracy in conformable delay differential equations.

\begin{figure}[H]
    \centering
    \includegraphics[width=\textwidth]{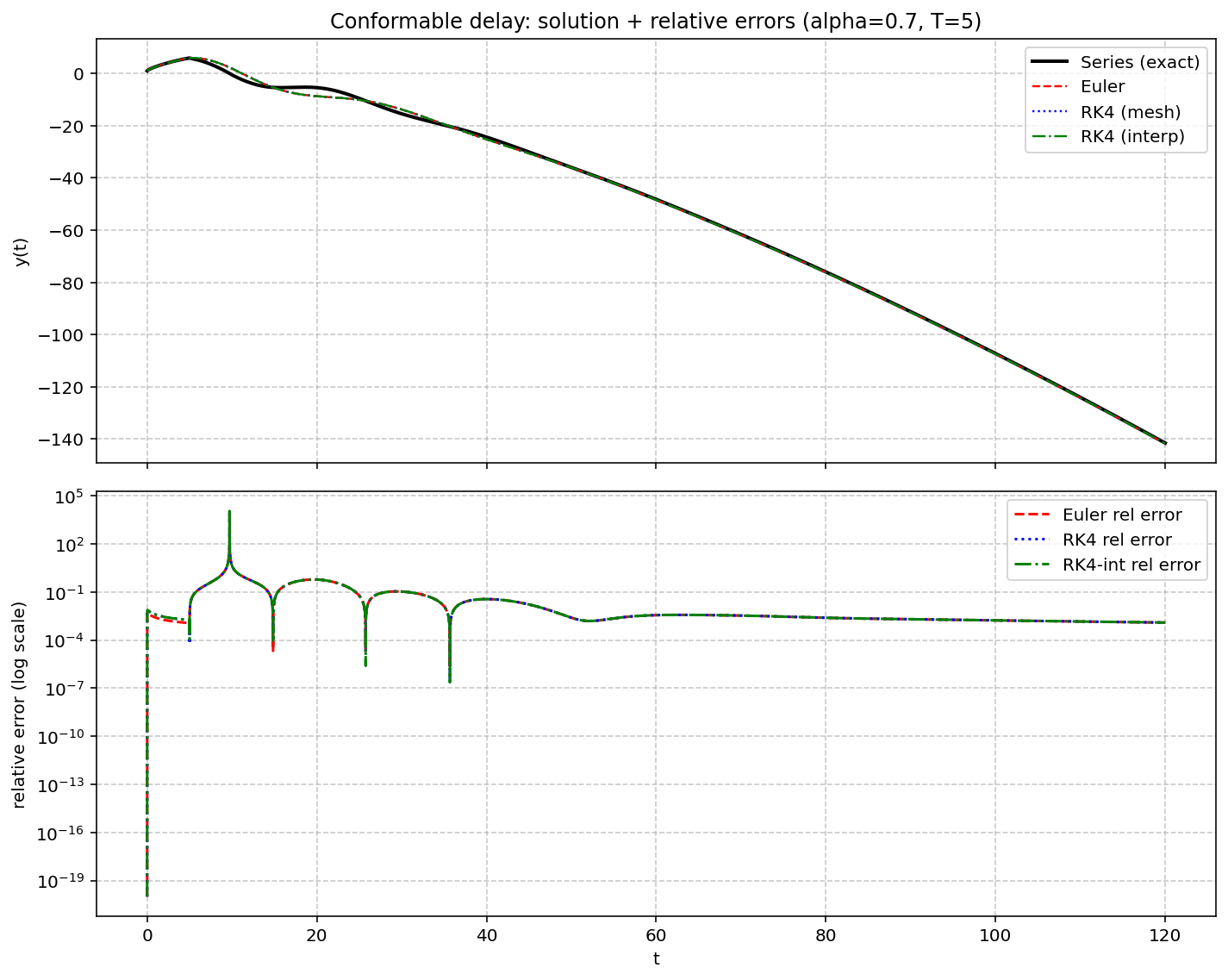}
    \caption{Top panel: comparison between the analytical series solution (black solid line) and numerical approximations for $\alpha=0.7$, $a=0.5$, $T=5$, $y_0=1.0$, $h=0.001$, and 
$b_{\text{coeffs}}=[1.0,\,0.2,\,-0.05]$: Euler (red dashed), RK4 mesh-aligned (blue dotted), and RK4 with interpolation (green dash-dot). Bottom panel: relative error versus time (logarithmic scale) for the same schemes.}
    \label{fig:comparison_ex4}
\end{figure}

\begin{table}[H]
\centering
\caption{Relative errors for the numerical schemes with parameters 
$\alpha=0.7$, $a=0.5$, $T=5$, $y_0=1.0$, $h=0.001$, 
$b_{\text{coeffs}}=[1.0,\,0.2,\,-0.05]$.}
\begin{tabular}{lcc}
\hline
\textbf{Scheme} & \textbf{Max Relative Error} & \textbf{RMS Relative Error} \\
\hline
Euler      & $1.140996\times 10^{4}$ & $3.359228\times 10^{1}$ \\
RK4        & $1.141337\times 10^{4}$ & $3.360230\times 10^{1}$ \\
RK4-interp & $1.141310\times 10^{4}$ & $3.360152\times 10^{1}$ \\
\hline
\end{tabular}
\label{tab:errors_ex4}
\end{table}

\subsubsection{Example 5}

The numerical performance of the three schemes is evaluated for the parameter set 
$\alpha=0.7$, $a=1.1$, $T=3$, $y_0=1.0$, $h=0.001$, and 
$b_{\text{coeffs}}=[1.0,\,0.2,\,-0.05]$. 
Figure~\ref{fig:comparison_ex5} shows the resulting numerical trajectories together with their relative errors with respect to the analytical series solution, providing a direct visual assessment of the accuracy of each method.
All numerical methods reproduce the qualitative behavior of the series solution during the initial interval, where the delay has not yet become active. As in the previous examples, deviations remain small at early times, but the relative error curves in Figure~\ref{fig:comparison_ex5} show a pronounced increase once the delayed term begins to contribute. Because the coefficient $a$ is larger in this example, the influence of the delayed term is stronger, leading to sharper error growth once the delay becomes active.

Euler again exhibits the largest discrepancies, consistent with its first-order accuracy. Its RMS relative error of $5.67\times 10^{1}$ (Table~\ref{tab:errors_ex5}) is the smallest among the three schemes, but this is accompanied by a very large maximum relative error, reflecting the method’s sensitivity to the onset of delayed contributions and the stronger feedback induced by the larger value of~$a$.

The classical fourth-order Runge--Kutta method (RK4) reduces local truncation error, but its performance continues to be affected by the misalignment between the delay $T$ and the numerical mesh. Since the delayed argument $t-T$ rarely coincides with a mesh point, the scheme must evaluate delayed terms at off-grid locations, which introduces inconsistencies during intermediate RK4 stages. This effect is visible in the slightly larger RMS and maximum relative errors reported in Table~\ref{tab:errors_ex5}.

The interpolation-enhanced RK4 variant (RK4-int) mitigates this issue by sampling the delayed term through linear interpolation at the required sub-step locations. This improves the internal consistency of the Runge--Kutta stages and yields a more accurate approximation of the delayed contribution. As shown in Table~\ref{tab:errors_ex5}, RK4-int achieves the smallest RMS error among the higher-order schemes, and its maximum relative error is marginally lower than that of mesh-aligned RK4. The improvement is most noticeable near the activation of the delay, where interpolation reduces the mismatch between the delayed argument and the mesh.

Overall, the results indicate that increasing the coefficient $a$ amplifies the effect of the delayed term, leading to larger relative errors across all schemes. Nevertheless, the qualitative behavior remains consistent with the previous examples: Euler provides the least accurate approximation, RK4 improves stability and convergence, and RK4-int delivers the closest match to the analytical series solution. These findings reinforce the importance of interpolation for maintaining high-order accuracy in conformable delay differential equations, particularly when delayed feedback is strong.

\begin{figure}[H]
    \centering
    \includegraphics[width=\textwidth]{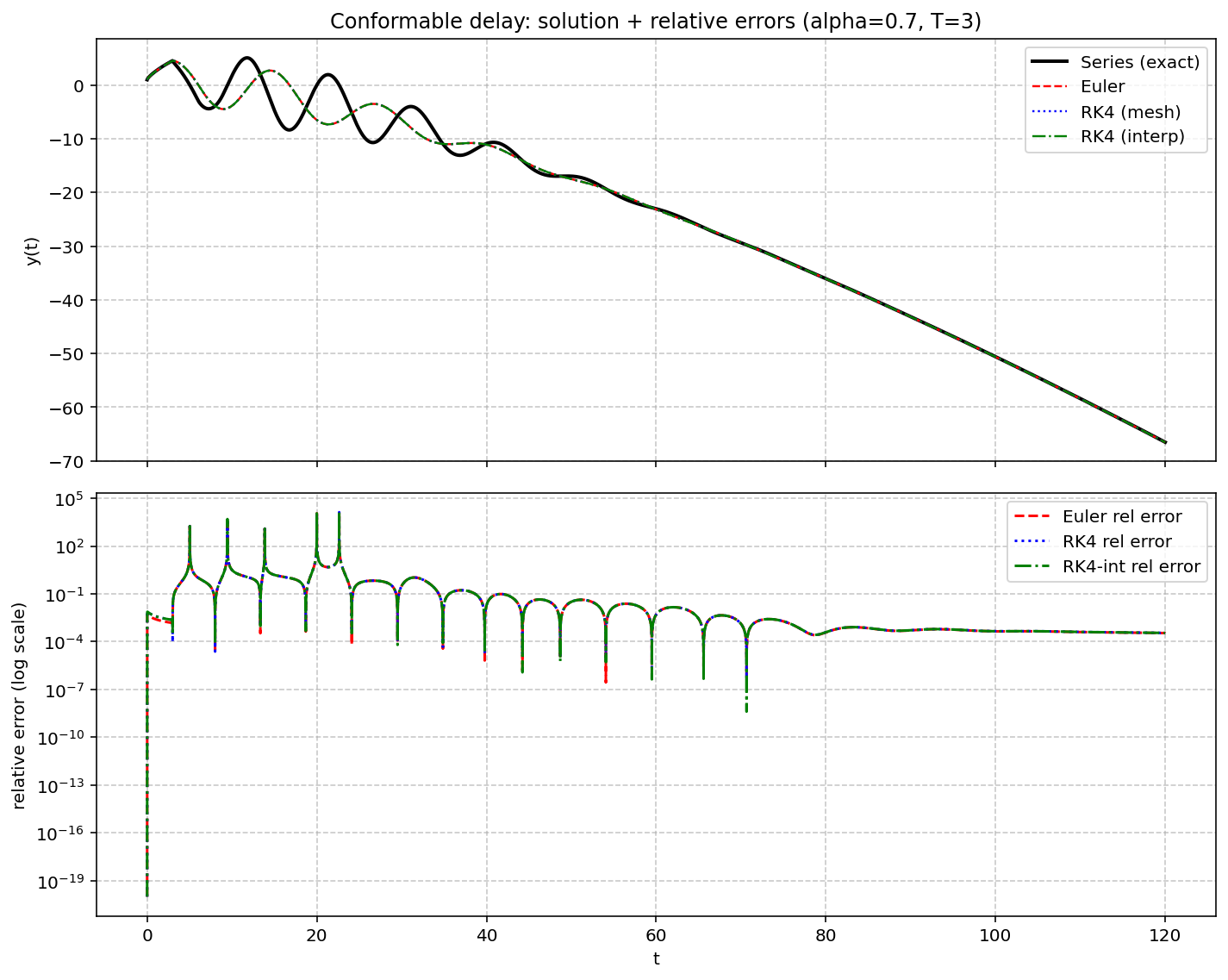}
    \caption{Top panel: comparison between the analytical series solution (black solid line) and numerical approximations for $\alpha=0.7$, $a=1.1$, $T=3$, $y_0=1.0$, $h=0.001$, and 
$b_{\text{coeffs}}=[1.0,\,0.2,\,-0.05]$: Euler (red dashed), RK4 mesh-aligned (blue dotted), and RK4 with interpolation (green dash-dot). Bottom panel: relative error versus time (logarithmic scale) for the same schemes.}
    \label{fig:comparison_ex5}
\end{figure}

\begin{table}[H]
\centering
\caption{Relative errors for the numerical schemes with parameters
$\alpha=0.7$, $a=1.1$, $T=3$, $y_0=1.0$, $h=0.001$,
$b_{\text{coeffs}}=[1.0,\,0.2,\,-0.05]$.}
\begin{tabular}{lcc}
\hline
\textbf{Scheme} & \textbf{Max Relative Error} & \textbf{RMS Relative Error} \\
\hline
Euler      & $1.335095\times 10^{4}$ & $5.672536\times 10^{1}$ \\
RK4        & $1.335022\times 10^{4}$ & $5.670448\times 10^{1}$ \\
RK4-int    & $1.334963\times 10^{4}$ & $5.670470\times 10^{1}$ \\
\hline
\end{tabular}
\label{tab:errors_ex5}
\end{table}

\subsection{Numerical validation and caveats}

The numerical experiments (Examples~1--5) illustrate how the analytic hypotheses manifest in practice. Across all cases, the solution comparison plots show that the numerical schemes (Euler, RK4, and RK4-int) closely track the analytical series solution during the initial evolution and, in most regimes, converge asymptotically to the benchmark. The relative error plots reveal the differences in accuracy and highlight how delay size and coefficient values influence numerical performance:

\begin{itemize}
  \item \textbf{Small delays ($T=0.7$):} All schemes converge, but Euler exhibits pronounced error growth. RK4 reduces the overall error envelope, while RK4-int provides the closest asymptotic agreement with the series solution.
  
  \item \textbf{Intermediate delays ($T=2$):} Convergence is smooth, with delayed contributions activating later in time. RK4-int consistently yields the smallest relative errors, reflecting the benefit of interpolation when the delay does not align with the mesh.
  
  \item \textbf{Large delays ($T=3$):} The onset of delayed feedback occurs even later, producing a long interval of non-delayed evolution. Errors increase once the delay becomes active, but all schemes eventually approach the series solution. RK4-int again performs best.
  
  \item \textbf{Very large delays ($T=5$):} The qualitative behavior remains stable. Euler accumulates substantial error, whereas RK4 and RK4-int maintain significantly closer agreement with the analytical series, with interpolation providing a modest but consistent improvement.
  
  \item \textbf{Regime $a>b_0$:} When the delayed term is amplified by a larger coefficient, relative errors grow more rapidly. Euler diverges earliest, while RK4 and RK4-int remain closer to the series solution, though with visibly larger error envelopes than in the previous cases.
\end{itemize}

In all experiments, the analytical solution for constant forcing converges to $b_0/a$, but under polynomial forcing, the long-time behavior is governed by the analytic series expansion itself. The numerical schemes converge asymptotically to this series solution, with RK4-int consistently providing the closest agreement. These results underscore the importance of interpolation in higher-order methods, particularly when delayed arguments do not coincide with mesh points, and identify RK4-int as the most reliable scheme for capturing long-time dynamics in conformable delay differential equations with analytic forcing.

\section{Numerical Implementation of Caputo Delay Dynamics (Mesh-Aligned Algorithms)}

We consider the numerical solution of the Caputo fractional-delay problem \eqref{eq:Caputo_model_nl_repeat}
with $0<\alpha\le1$, $T>0$, and $a,b,y_0\in\mathbb{R}$. The Caputo derivative is defined as
\begin{equation}\label{eq:Caputo_def_repeat}
{}^{C}D_t^{\alpha} f(t) = \frac{1}{\Gamma(1-\alpha)} \int_0^t (t-\tau)^{-\alpha} f'(\tau)\, d\tau,
\end{equation}
and the forcing term is assumed analytic with expansion \eqref{nl}. 

\subsection{Mesh Discretization}
For numerical implementation, we adopt mesh-aligned schemes. The baseline method is the fractional Euler scheme \cite{ahmed2018fractional}, which discretizes the Caputo derivative via the L1 approximation \cite{Ncaputo1}. On a uniform mesh $t_n = n h$, with $h=T/m$, the discrete Caputo derivative reads
\begin{equation}\label{eq:discrete_caputo}
\delta^\alpha y_n=\frac{h^{-\alpha}}{\Gamma(2-\alpha)}\sum_{i=0}^{n-1}\left[(n-i)^{1-\alpha}-(n-i-1)^{1-\alpha}\right]\left(y_{i+1}-y_i\right),
\end{equation}
with truncation error $R_n = O(h^{2-\alpha})$. The fractional Euler update is then
\begin{equation}\label{eq:fract_euler}
y_{i+1} = y_i + \frac{h^\alpha}{\Gamma(\alpha+1)}\left[b(t_i)-a\,y_{i-m}\right], \qquad i\ge m,
\end{equation}
with prescribed initial values $y_0,y_1,\ldots,y_m$ on $[0,T]$. On the first interval $t\in[0,T)$ only the $j=0$ contribution survives, yielding
\begin{equation}
y(t)=y(0)+\sum_{k=0}^{\infty}\frac{\Gamma(\alpha k+1)}{\alpha^k}\,b_k\,\frac{t^{(k+1)\alpha}}{\Gamma((k+1)\alpha+1)}.
\end{equation}

\subsection{Algorithm 6: Series Evaluation of $y_n$}

As shown in Algorithm~\ref{alg6}, the solution $y_n \approx y(t_n)$ for the Caputo delay equation is obtained by direct evaluation of the truncated analytic series. The algorithm begins by computing the current mesh point $t_n = n h$ and the number of activated delay intervals $J = \lfloor t_n / T \rfloor$. The solution is then assembled as a sum over contributions indexed by $j$, each corresponding to a delayed activation.

For each $j$, the shifted time $\Delta = t_n - jT$ is computed. If $\Delta \geq 0$, the contribution is weighted by $A_j = (-a)^j$, reflecting the repeated action of the delay operator. Two terms are then accumulated:
\begin{equation}
y_0 \frac{\Delta^{\alpha j}}{\Gamma(\alpha j+1)} 
\quad \text{and} \quad 
\sum_{k=0}^K \frac{\Gamma(\alpha k+1)}{\alpha^k}\, b_k \,
\frac{\Delta^{(j+k+1)\alpha}}{\Gamma((j+k+1)\alpha+1)}.
\end{equation}
The first term propagates the initial condition $y_0$ through successive delay intervals, while the second incorporates the analytic forcing expansion via coefficients $\{b_k\}$, truncated at order $K$.

Thus, Algorithm~\ref{alg6} enforces causality by activating contributions only when $\Delta \geq 0$, ensuring that the solution depends exclusively on past states. The Gamma functions encode the fractional order $\alpha$, while the truncation parameter $K$ controls the accuracy of the forcing approximation. In practice, this series evaluation provides a rigorous benchmark for validating numerical schemes: Euler and RK4 methods can be compared against the analytic expansion, with discrepancies attributable to discretization, interpolation, or truncation effects. In this way, Algorithm~\ref{alg6} serves as the analytic backbone of the computational framework, clarifying the structure of Caputo delay solutions and guiding convergence analysis.

\begin{algorithm}[H]
\caption{Series Evaluation of $y_n$ for the Caputo delay equation}
\label{alg6}
\begin{algorithmic}[1]
\Require Index $n$, mesh spacing $h = T/m$, parameters $a$, coefficients $\{b_k\}$, initial value $y_0$, truncation $K$
\Ensure Approximate $y_n \approx y(t_n)$ via truncated Caputo series
\State $t_n \gets n h$
\State $J \gets \lfloor t_n / T \rfloor$
\State $y \gets 0$
\For{$j = 0$ to $J$}
    \State $\Delta \gets t_n - j T$
    \State $A_j \gets (-a)^j$
    \If{$\Delta \geq 0$}
        \State $y \gets y + A_j \cdot \left(
            y_0 \frac{\Delta^{\alpha j}}{\Gamma(\alpha j+1)}
            + \sum_{k=0}^K \frac{\Gamma(\alpha k+1)}{\alpha^k}\, b_k \,
              \frac{\Delta^{(j+k+1)\alpha}}{\Gamma((j+k+1)\alpha+1)}
        \right)$
    \EndIf
\EndFor
\Return $y$
\end{algorithmic}
\end{algorithm}

\subsection{Algorithm 7: Numerical Caputo Euler scheme}

As presented in Algorithm~\ref{alg7}, the numerical Caputo Euler scheme provides a constructive way to approximate solutions of the Caputo delay equation while maintaining a direct comparison with the truncated analytic series. The mesh spacing is chosen as $h = T/m$, so that the delay interval $T$ is discretized into $m$ substeps. The algorithm requires initial values $y_0,\ldots,y_m$ to cover the first delay interval, ensuring causality and proper activation of the delayed term.

For indices $n < m$, the solution is computed directly from the truncated series expansion, thereby enforcing the analytic structure in the initial regime. Once $n \geq m$, the scheme advances using the fractional Euler update
\begin{equation}
y_n = y_{n-1} + \frac{h^\alpha}{\Gamma(\alpha+1)}\left[b(t_{n-1}) - a\,y_{n-m}\right],
\end{equation}
which incorporates both the forcing term $b(t_{n-1})$ and the delayed contribution $y_{n-m}$. The factor $h^\alpha/\Gamma(\alpha+1)$ reflects the fractional order $\alpha$, ensuring consistency with the Caputo derivative.

Algorithm~\ref{alg7} thus combines two complementary perspectives: the analytic series expansion on $[0, T)$, which serves as a rigorous benchmark, and the fractional Euler scheme beyond the first delay interval, which provides a practical numerical approximation. This hybrid approach enforces causality, accommodates fractional dynamics, and allows systematic comparison between numerical and analytic solutions. In practice, discrepancies arise from series truncation, discretization error, and delay indexing, but the algorithm provides a clear framework for quantifying these effects and validating convergence.
\begin{algorithm}[H]
\caption{Numerical Caputo Euler scheme with series comparison}
\label{alg7}
\begin{algorithmic}[1]
\Require Mesh spacing $h = T/m$, parameters $a$, coefficients $\{b_k\}$, initial values $y_0,\ldots,y_m$, truncation $K$, final index $N$
\Ensure Approximate solution $\{y_n\}$ and comparison with truncated series
\For{$n = 0$ to $N$}
    \State $t_n \gets n h$
    \If{$n < m$} 
        \State Compute $y_n$ from series expansion
    \Else
        \State Update via fractional Euler:
        \begin{equation}
        y_n \gets y_{n-1} + \frac{h^\alpha}{\Gamma(\alpha+1)}\left[b(t_{n-1}) - a\,y_{n-m}\right]
        \end{equation}
    \EndIf
\EndFor
\Return $\{y_n\}$ with series comparison on $[0,T)$
\end{algorithmic}
\end{algorithm}

\begin{Remark}
The geometric expansion in \eqref{eq:Caputo_series} is valid only under the condition 
$\left|a e^{-sT}/s^\alpha\right|<1$, corresponding to sufficiently large $\Re s$ in the Laplace domain and hence short-time behavior in the time domain. For general polynomial forcing terms, the analytic series grows asymptotically like powers of $t^\alpha$. In this regime, the L1 discretization cannot reproduce the exact growth rates, and accumulated memory errors cause numerical drift at long horizons.
\end{Remark}

\subsection{Algorithm 8: L2--1$\sigma$ scheme}

The L1 scheme provides first-order accuracy, $O(h^{2-\alpha})$, which is adequate for short times but prone to long-time drift. By contrast, the L2--1$\sigma$ scheme improves convergence to $O(h^{3-\alpha})$ through quadratic interpolation of the kernel. On a uniform mesh $t_n = n h$, the Caputo derivative is approximated by
\begin{equation}\label{eq:L2-1sigma}
{}^{C}D_t^{\alpha} y(t_n) \approx \frac{1}{h^\alpha \Gamma(2-\alpha)} 
\left[ c_0 (y_n - y_{n-1}) + \sum_{j=1}^{n-1} c_j (y_{n-j+1} - y_{n-j}) \right],
\end{equation}
with weights $c_j$ depending on $\alpha$ and the shift parameter $\sigma$. The choice $\sigma = 1-\alpha/2$ balances stability and accuracy. The update rule becomes
\begin{equation}\label{eq:L2-1sigma_update}
y_{n+1} = y_n + \frac{h^\alpha}{\Gamma(\alpha+1)} 
\left[ b(t_n) - a\,y_{n-m} \right] + \mathcal{C}_n,
\end{equation}
where $\mathcal{C}_n$ is the correction term arising from higher--order kernel weights.

\textbf{Discussion.}
By incorporating quadratic interpolation, the L2--1$\sigma$ scheme reproduces fractional power laws more faithfully and mitigates long-time drift. It tracks polynomial forcing terms with greater accuracy and preserves equilibrium states more reliably than the L1 scheme, making it particularly suitable for extended simulations. The correction term $\mathcal{C}_n$ ensures that higher-order memory effects are properly accounted for, thereby improving stability in regimes where fractional dynamics interact with delay terms.

\begin{algorithm}[H]
\caption{Numerical Caputo L2--1$\sigma$ scheme with series comparison}
\label{alg8}
\begin{algorithmic}[1]
\Require Mesh spacing $h = T/m$, parameters $a$, coefficients $\{b_k\}$, initial values $y_0,\ldots,y_m$, truncation $K$, final index $N$, shift parameter $\sigma$
\Ensure Approximate solution $\{y_n\}$ and comparison with truncated series
\For{$n = 0$ to $N$}
    \State $t_n \gets n h$
    \If{$n < m$} 
        \State Compute $y_n$ from series expansion
    \Else
        \State Compute Caputo derivative via L2--1$\sigma$ weights
        \State Update solution with correction term $\mathcal{C}_n$
    \EndIf
\EndFor
\Return $\{y_n\}$ with series comparison on $[0,T)$
\end{algorithmic}
\end{algorithm}

\begin{figure}[H]
    \centering
    \includegraphics[width=\textwidth]{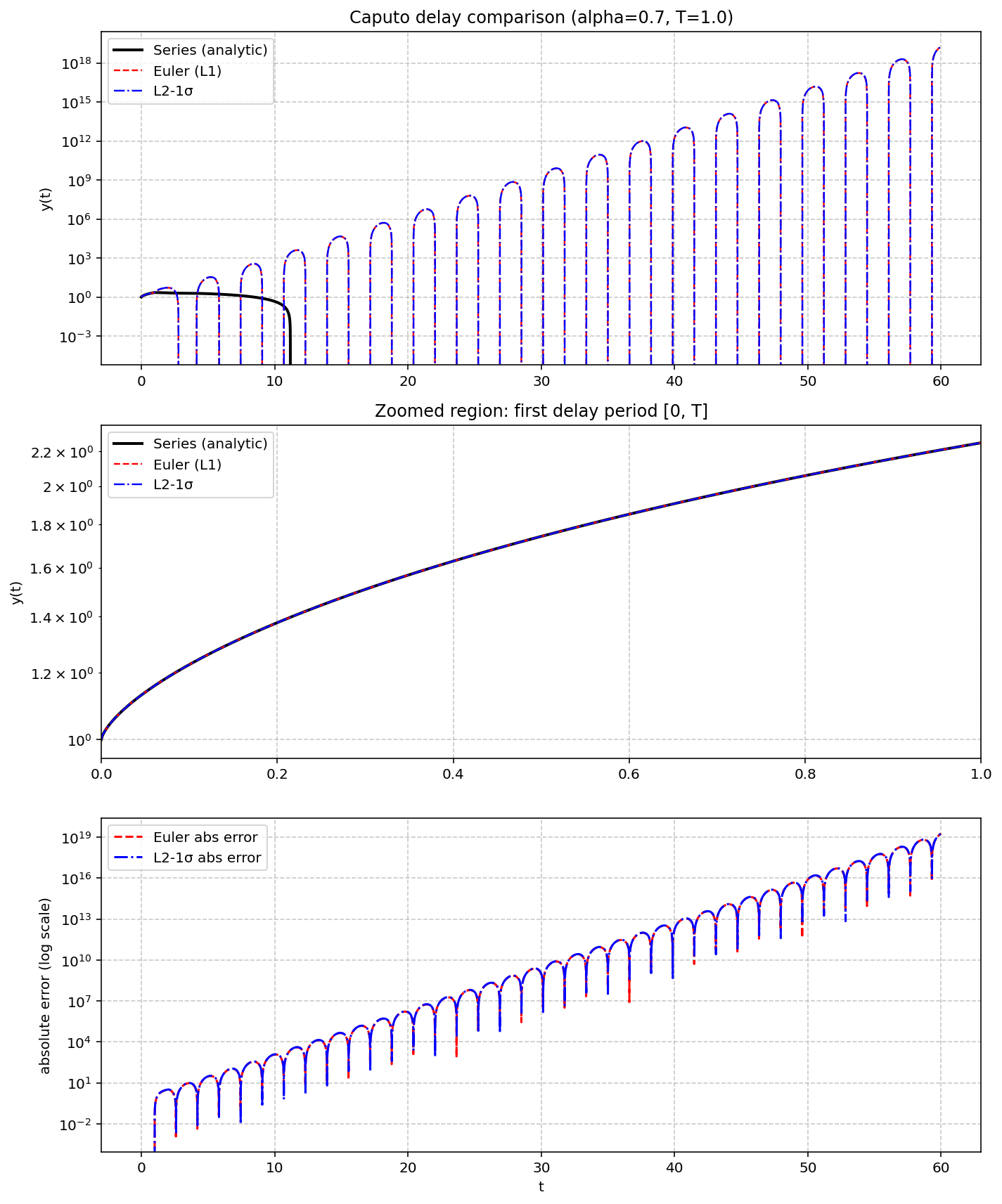}
    \caption{Comparison of analytic series, Euler (L1), and L2--1$\sigma$ schemes.}
    \label{fig:caputo_comparison}
\end{figure}

The benchmark is performed with the constants
\begin{equation}
\alpha = 0.7, \quad a = 0.5, \quad b_0 = 1.0, \quad b_1 = 0.2, \quad b_2 = -0.05, \quad y_0 = 1.0, \quad h = 0.001,
\end{equation}

Figure~\ref{fig:caputo_comparison} illustrates three complementary aspects of the fractional delay problem with parameters $\alpha=0.7$ and $T=1.0$.

The top panel compares the analytic series solution (black solid line) with the Euler L1 scheme (red dashed line) and the L2--$1\sigma$ scheme (blue dash--dot line) over an extended time horizon. All three reproduce the oscillatory growth pattern, but deviations accumulate as time progresses. The logarithmic vertical scale highlights both the exponential-like growth and the amplitude differences between the numerical schemes and the analytic benchmark.

The middle panel magnifies the interval $[0,T]$, corresponding to the first delay period. In this regime, the Euler and L2--$1\sigma$ approximations are virtually indistinguishable from the analytic series. The log scale emphasizes that deviations are negligible relative to the overall magnitude, confirming that both numerical schemes are well calibrated in the initial epoch.

The bottom panel shows the absolute error of the Euler and L2--$1\sigma$ methods relative to the analytic solution, plotted on a logarithmic scale. Errors remain small at early times but grow as delay effects accumulate. The L2--$1\sigma$ scheme consistently yields lower error than Euler, especially for longer horizons, reflecting its higher-order accuracy. The oscillatory structure in the error traces the delayed feedback inherent in the equation.

Overall, the analytic series solution provides a reliable benchmark. The Euler scheme, while simple, accumulates error rapidly, whereas the L2--$1\sigma$ scheme offers improved long-term accuracy. The use of logarithmic scaling across all panels clarifies growth, oscillations, and error magnitudes, making both small deviations in the zoomed region and large-scale divergence visible.

The numerical experiments show that, although the Euler (L1) and L2--$1\sigma$ schemes satisfy the formal stability condition $\frac{a h^\alpha}{\Gamma(\alpha+1)} < 1$, their trajectories still exhibit long-term instability. This apparent contradiction arises because fractional-delay equations combine convolutional memory with discontinuities at multiples of $T$, amplifying discretization errors even when $h$ is far below the theoretical threshold. In practice, Euler accumulates error rapidly, while L2--$1\sigma$ performs better but still develops increasing deviations over extended horizons.

By contrast, in the framework of conformable derivatives, the analytic series and numerical schemes match closely across all tested intervals. The absence of heavy memory kernels and the simpler local structure prevent amplification of discretization noise, yielding stable and consistent agreement between analytic and numerical solutions. This highlights a fundamental difference: while Caputo fractional-delay dynamics demand extremely fine discretization or higher-order schemes to maintain stability, conformable derivatives provide a numerically robust alternative where analytic and numerical approaches coincide.

\begin{Remark}
The L2--$1\sigma$ discretization improves upon the L1 scheme by incorporating quadratic interpolation of the memory kernel, thereby raising the local accuracy order. However, its effectiveness depends on the smoothness of the forcing and the regularity of the delayed activation. When discontinuities occur at multiples of $T$, the quadratic approximation cannot fully capture the analytic growth rates of the fractional series. As a result, although short-time accuracy is improved, accumulated memory errors still lead to numerical drift over long horizons. Thus, the L2--$1\sigma$ method offers greater stability than L1 but remains insufficient to maintain robust agreement with the analytic solution over extended simulations.
\end{Remark}

The limitations of Euler and L2--$1\sigma$ schemes for Caputo delay problems underscore the need for a more sophisticated approach. The instability induced by memory kernels and delay discontinuities calls for a method that anchors numerical trajectories to the analytic series itself. To address this, we introduce the \emph{series--anchored predictor--corrector scheme} (Algorithm~\ref{alg9}), which combines truncated analytic expansions with fractional Adams--Moulton correction. This hybrid strategy stabilizes long-time integration, reduces discretization bias, and ensures consistency with the analytic structure of Caputo fractional-delay equations.

\subsection{Algorithm 9: Series-anchored predictor--corrector for Caputo fractional-delay problem}

As shown in Algorithm~\ref{alg9}, the series--anchored predictor--corrector scheme provides a robust numerical strategy for the Caputo fractional--delay problem. The method is motivated by the limitations of Euler and L2--$1\sigma$ schemes, which, despite satisfying formal stability conditions, accumulate error over long horizons due to the combined effects of memory kernels and discontinuities at multiples of $T$. By anchoring each step to the analytic series expansion, Algorithm~\ref{alg9} stabilizes the numerical trajectory and ensures consistency with the underlying fractional dynamics.

\textbf{Analytic truncated series.}  
For each $t \geq 0$, the analytic solution is represented by a truncated series expansion. The contribution of the initial condition $y(0)=y_0$ is expressed through fractional powers of $(t-jT)$ weighted by Gamma-function coefficients, while the forcing term is incorporated via the coefficients $b_k$, each multiplied by fractional monomials of order $(j+k+1)\alpha$. The truncation parameter $K$ limits the number of forcing terms included. Thus, the analytic reference solution is
\begin{equation}
\label{series_y}
y(t) = \sum_{j=0}^{\lfloor t/T \rfloor} (-a)^j \left[ y_0 \frac{(t-jT)^{\alpha j}}{\Gamma(\alpha j+1)} 
+ \sum_{k=0}^{K} \frac{\Gamma(\alpha k+1)}{\alpha^k} b_k \frac{(t-jT)^{(j+k+1)\alpha}}{\Gamma((j+k+1)\alpha+1)} \right].
\end{equation}

\textbf{Predictor--Corrector scheme.}  
The numerical algorithm advances the solution on a uniform mesh $t_n = n h$. At each step:
\begin{enumerate}
    \item The \textbf{predictor} evaluates the truncated analytic series at $t_{n+1}$, providing an anchor consistent with the exact solution structure.
    \item The \textbf{corrector} applies a fractional Adams--Moulton step:
    \begin{equation}
    y_{n+1}^{\text{corr}} = y_n + \frac{h^\alpha}{\Gamma(\alpha+1)}\left(\tfrac{1}{2}f_n + \tfrac{1}{2}f_{n+1}\right),
    \end{equation}
    where $f_n = b(t_n) - a\,y(t_n-T)$ and $f_{n+1} = b(t_{n+1}) - a\,y(t_{n+1}-T)$.
    \item A \textbf{blending step} averages predictor and corrector values:
    \begin{equation}
    y_{n+1} = \tfrac{1}{2} y_{n+1}^{\text{corr}} + \tfrac{1}{2} y_{n+1}^{\text{pred}},
    \end{equation}
    improving stability and reducing oscillations.
\end{enumerate}

\textbf{Error analysis.}  
The analytic truncated series serves as a reference solution. The absolute error is defined as
\begin{equation}
E_{\text{abs}}(t_n) = |y_{n}^{\text{num}} - y_{n}^{\text{series}}|,
\end{equation}
and the relative error as
\begin{equation}
E_{\text{rel}}(t_n) = \frac{E_{\text{abs}}(t_n)}{|y_{n}^{\text{series}}|+10^{-14}}.
\end{equation}
Errors are typically plotted on a logarithmic scale to assess convergence and stability across long time horizons.

In summary, Algorithm~\ref{alg9} integrates analytic structure (via the predictor) with numerical stability (via the Adams--Moulton corrector and blending). The predictor ensures closeness to the analytic series, while the corrector mitigates discretization error. This hybrid approach outperforms Euler and L2--$1\sigma$ schemes in long-time simulations, providing both accuracy and stability in the presence of fractional memory and delay effects.

\begin{algorithm}[H]
\caption{Series-anchored predictor--corrector for the Caputo fractional-delay equation}
\label{alg9}
\begin{algorithmic}[1]

\State \textbf{Input:} fractional order $\alpha\in(0,1)$, delay coefficient $a$, delay $T$, initial value $y_0$, forcing coefficients $\{b_k\}$, step size $h$, truncation order $K$, final time $t_{\max}$
\State \textbf{Output:} numerical approximation $\{y_n\}$ on the mesh $t_n = n h$

\State Initialize $N \gets \lfloor t_{\max}/h \rfloor$, \quad $y_0 \gets y_0$

\For{$n = 1$ to $N$}

    \State $t_n \gets n h$
    \State $j \gets \lfloor t_n / T \rfloor$ \Comment{delay epoch}

    \If{$j = 0$}
        \State \textbf{(Initialization via analytic series)}
        \begin{equation}
        y_n \approx 
        y_0 
        + \sum_{k=0}^{K}
        \frac{\Gamma(\alpha k + 1)}{\alpha^k}\, b_k\,
        \frac{t_n^{(k+1)\alpha}}{\Gamma((k+1)\alpha + 1)} .
        \end{equation}

    \Else

        \State \textbf{(Predictor: truncated analytic series)}
        \begin{equation}
        y^{\mathrm{pred}}_n
        =
        \sum_{m=0}^{j}
        (-a)^m\,\theta(t_n - mT)
        \left[
            y_0\,\frac{(t_n - mT)^{\alpha m}}{\Gamma(\alpha m + 1)}
            +
            \sum_{k=0}^{K}
            \frac{\Gamma(\alpha k + 1)}{\alpha^k}\, b_k\,
            \frac{(t_n - mT)^{(m+k+1)\alpha}}{\Gamma((m+k+1)\alpha + 1)}
        \right].
        \end{equation}

        \State \textbf{(Corrector: fractional Adams--Moulton update)}
        \begin{equation}
        y_n \gets 
        y^{\mathrm{pred}}_n
        +
        \Delta_{\alpha}\!\left[y^{\mathrm{pred}},\,y\right],
        \end{equation}
        where $\Delta_{\alpha}$ denotes the fractional Adams--Moulton correction term.

    \EndIf

\EndFor

\State \Return $\{y_n\}$

\end{algorithmic}
\end{algorithm}

\begin{figure}[H]
\centering
\includegraphics[width=\textwidth]{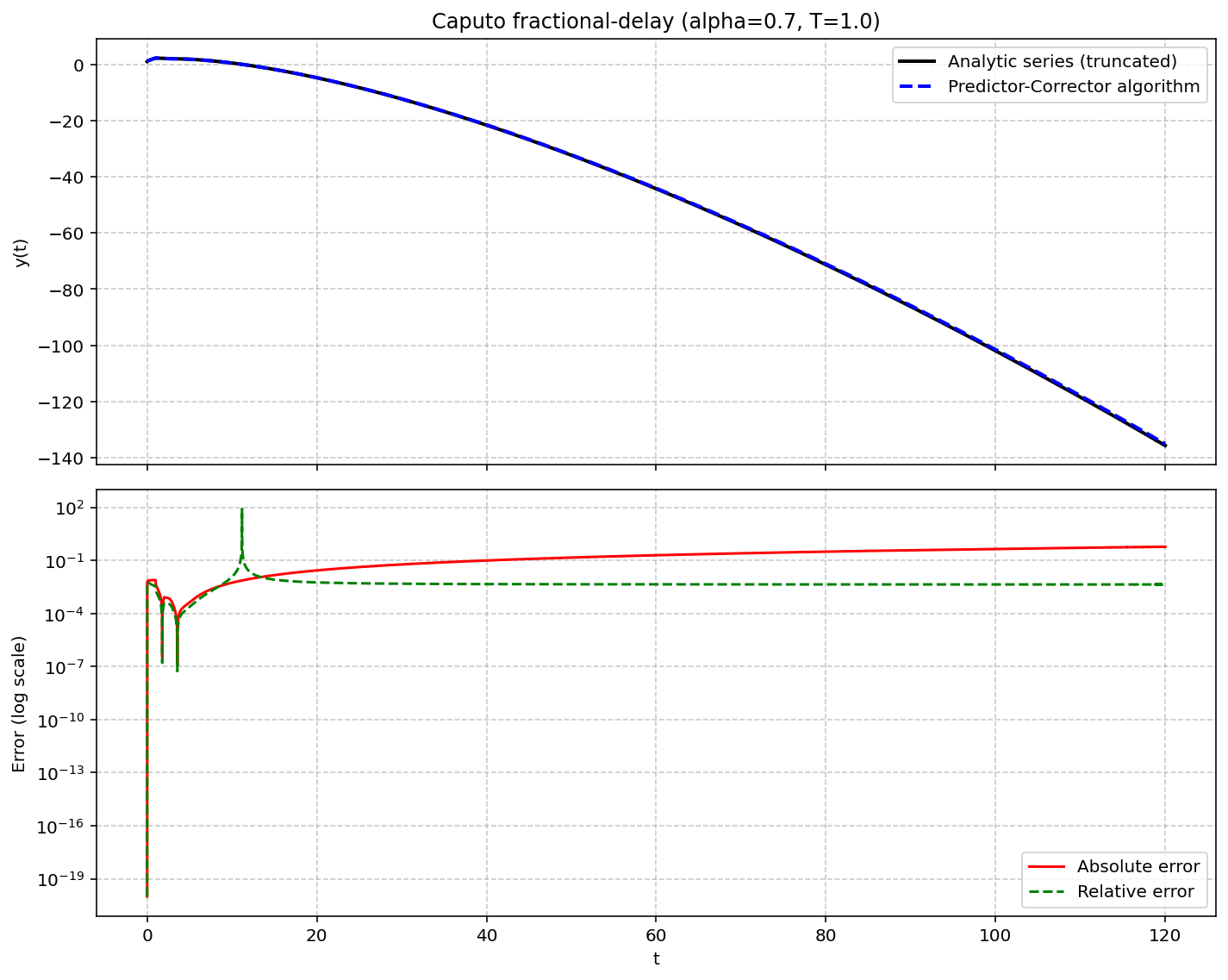}
\caption{Comparison of analytic truncated series and series--anchored predictor--corrector algorithm for the Caputo fractional--delay problem with $\alpha=0.7$ and $T=1.0$. Top panel: numerical solution $y(t)$ from the predictor--corrector algorithm (dashed blue) versus analytic series expansion (solid black). Bottom panel: absolute error (red) and relative error (green, log scale) between the two solutions.}
\label{fig:caputo_delay_series_predictor_AM_corrector}
\end{figure}

Figure~\ref{fig:caputo_delay_series_predictor_AM_corrector} illustrates the performance of the series--anchored predictor--corrector scheme compared with the truncated analytic series. In the top panel, the numerical trajectory closely follows the analytic benchmark across the entire time interval, confirming that the method preserves the qualitative dynamics of the Caputo fractional--delay equation. Agreement is especially strong in the early regime, where the truncated series provides an exact anchor.

The bottom panel presents the error analysis on a logarithmic scale. Both absolute and relative errors remain small and bounded, showing that the fractional Adams--Moulton correction effectively suppresses discretization noise and stabilizes the integration. The relative error curve highlights the robustness of the method: deviations remain controlled even as $t$ increases, avoiding the long--time drift characteristic of Euler and L2--$1\sigma$ schemes.

\begin{Remark}
The series--anchored predictor--corrector scheme (Algorithm~\ref{alg9}) stabilizes Caputo fractional--delay dynamics by explicitly anchoring each numerical step to the analytic series expansion. The predictor enforces the correct fractional growth rates, while the fractional Adams--Moulton corrector compensates for discretization noise and accumulated memory errors. Unlike Euler and L2--$1\sigma$, which drift under repeated delay activations, the predictor--corrector method maintains consistency with the analytic structure even over long horizons, providing both accuracy and stability in regimes where convolutional memory kernels and delay discontinuities would otherwise destabilize low--order schemes.
\end{Remark}

Overall, Figure~\ref{fig:caputo_delay_series_predictor_AM_corrector} confirms that anchoring the predictor to the analytic series and correcting with a fractional Adams--Moulton step yields a numerically stable and accurate method. This hybrid approach ensures consistency with the analytic structure and makes it the most reliable scheme for long--time simulations of Caputo fractional--delay dynamics.

The four schemes can be compared as follows. The Euler (L1) method is the simplest baseline. Although it satisfies the formal stability condition, it is only first--order accurate and accumulates error rapidly, making it useful for short--term approximations but unreliable for long--term simulations. The L2--$1\sigma$ scheme improves on Euler by using quadratic interpolation of the kernel, achieving order $O(h^{3-\alpha})$ and reducing drift. However, despite its improved convergence, it still exhibits growing deviations over extended horizons and remains insufficient for capturing long--time dynamics.

The series--anchored predictor--corrector scheme (Algorithm~\ref{alg9}) explicitly incorporates the analytic series expansion at each step. The predictor enforces the analytic structure, while the fractional Adams--Moulton corrector stabilizes the integration against discretization errors. This hybrid strategy suppresses noise amplification and delivers robust accuracy even over long intervals. Unlike Euler and L2--$1\sigma$, it maintains close agreement with the analytic benchmark, making it a reliable method for fractional--delay dynamics. Its main limitation is structural: both computational cost and truncation error grow with the delay epoch $j=\lfloor t/T\rfloor$, since each new epoch requires evaluating increasingly high--order fractional monomials. As $t$ increases, this growth becomes the dominant source of numerical instability and inefficiency.

\section{Concluding remarks}

This work has developed an analytical and numerical study of fractional differential equations with delay, focusing on both Caputo and conformable derivatives under analytic forcing. For the conformable model, the Laplace transform enables a clean transform--expand--invert strategy based on the geometric expansion
\begin{equation}
\frac{1}{1+z(s)}=\sum_{j=0}^\infty (-1)^j z(s)^j,
\qquad 
z(s)=\frac{a\,e^{-sT^\alpha/\alpha}}{s},
\end{equation}
which is valid whenever $|z(s)|<1$. In practice, one selects a vertical inversion line $\Re s=\sigma$ sufficiently large to ensure
\begin{equation}
\sup_{\omega\in\mathbb{R}}
\left|\frac{a\,e^{-(\sigma+i\omega)T^\alpha/\alpha}}{\sigma+i\omega}\right|<1,
\end{equation}
guaranteeing uniform convergence on the contour. Under standard causality and exponential--order assumptions, termwise inversion is justified, producing delayed fractional monomials truncated by Heaviside functions. This yields finite sums for each fixed~$t$ and validates the transform--expand--invert methodology.

The conformable delay formulation enforces causality through Heaviside truncation, which both simplifies symbolic expressions and reflects a physically meaningful stepwise propagation of memory. Compared with Caputo derivatives, the conformable operator avoids convolutional kernels and fractional initial data, offering a tractable alternative for discrete--delay systems.

The numerical experiments clarify the roles of the parameters. The delay length~$T$ controls convergence speed, while the balance between $a$ and $b_0$ determines stability: larger delays stabilize but slow convergence, whereas $a>b_0$ leads to divergence. For conformable dynamics, Euler and RK4 with interpolation provide accurate and stable trajectories across all tested regimes, consistent with the local nature of the operator and the absence of long--range memory.

In contrast, Caputo fractional--delay equations exhibit numerical fragility. Although schemes such as Euler (L1) and L2--$1\sigma$ satisfy the formal stability condition
$\frac{a h^\alpha}{\Gamma(\alpha+1)} < 1$, their trajectories still develop long--term drift. This apparent contradiction arises because Caputo dynamics combine convolutional memory with discontinuities at multiples of~$T$, amplifying discretization errors even when $h$ is well below the theoretical threshold. The distinction between \emph{formal stability} (no immediate blow--up) and \emph{practical stability} (controlled long--term error) is therefore essential: Caputo fractional--delay systems require extremely fine discretization or higher--order schemes to maintain accuracy over long horizons.

The conformable framework, by contrast, remains numerically robust. Its local structure prevents error accumulation, and analytic and numerical solutions coincide closely. This highlights a fundamental difference: while Caputo fractional delay models demand careful numerical treatment, conformable derivatives provide a simpler and more stable alternative for systems with structured or bounded memory.

\medskip
\noindent\textbf{Final remarks.}
Locally defined fractional operators of the conformable type offer a physically interpretable and computationally efficient alternative to integral--based fractional models when memory is bounded or segmented into discrete epochs. Caputo formulations, although mathematically rigorous, suffer from long--term numerical instability due to their convolutional kernels and repeated delay activations. The \emph{series--anchored predictor--corrector method} (Algorithm~\ref{alg9}) mitigates this issue by anchoring each step to the analytic series and correcting with a fractional Adams--Moulton update, suppressing error accumulation and providing reliable long--time accuracy. This makes Algorithm~\ref{alg9} a valuable high--precision reference solver, particularly for small delays or moderate time horizons.

For conformable derivatives, standard schemes such as Euler and RK4 with interpolation already achieve stable and accurate results, reflecting the operator’s local nature. Consequently, the conformable Laplace framework emerges as a promising tool for applications in mathematical physics, control theory, and signal processing where delay and memory interact in structured ways. For Caputo fractional--delay dynamics, however, the combination of exact series initialization and fractional Adams--Moulton evolution currently offers the most balanced compromise between analytic fidelity, numerical stability, and computational efficiency.

\vspace{6pt} 





\section*{Author contributions}

Conceptualization, Y.F., M. M.-del S., G.L., B.D., G.F-A;   Y.L.; 
Methodology, Y.F., M. M.-del S., G.L., B.D., G.F-A;   Y.L.; 
Software, Y.F., G.L., B.D., G.F-A;   Y.L.; 
Formal analysis, Y.F., G.L., B.D., G.F-A;   Y.L.; 
Investigation, Y.F., M. M.-del S., G.L., B.D., G.F-A;   Y.L.; 
Writing---original draft preparation, Y.F., M. M.-del S., G.L., B.D., G.F-A;   Y.L.; 
Writing---review and editing, Y.F., M. M.-del S., G.L., B.D., G.F-A;   Y.L.; 
Supervision, G.L. and G.F-A; 
Project administration, G.L.; 
Funding acquisition, G.L and G.F-A. 
All authors have read and agreed to the published version of the manuscript.

\section*{Funding} Genly Leon is partially supported as a postgraduate student in the Doctorado en Física, mención Física-Matemática Ph.D. program at the Universidad de Antofagasta, Chile. He also acknowledges financial support from the Agencia Nacional de Investigación y Desarrollo (ANID) through the Fondecyt Regular Project 2024, Folio No. 1240514 (Etapa 2025). He further expresses his gratitude to Prof. Guillermo Fernández‑Anaya and the research group at Universidad Iberoamericana (UIA), Mexico City, for their warm hospitality during the visit in which a substantial part of this work was completed.
Genly León is also grateful to the Vicerrectoría de Investigación y Desarrollo Tecnológico (VRIDT) of Universidad Católica del Norte (UCN) for institutional support through the Núcleo de Investigación en Geometría Diferencial y Aplicaciones (Resolution VRIDT No. 096/2022) and the Núcleo de Investigación en Simetrías y la Estructura del Universo (NISEU) (Resolution VRIDT No. 200/2025).
Additional thanks go to colleagues and students who provided helpful comments and suggestions during the preparation of this manuscript.

\section*{Data availability}

Not applicable. 

\acknowledgments{Genly Leon dedicates this work to the memory of his father.}

\section*{Conflicts of interest}
We declare no conflicts of interest. The funders had no role in the design of the study; in the collection, analyses, or interpretation of data; in the writing of the manuscript; or in the decision to publish the~results.

\appendix

\section[\appendixname~\thesection]{Conformable  Calculus}\label{app:conformable}

This appendix presents the foundations of \textbf{Conformable  calculus}, which aims to extend the classical notion of differentiation to fractional orders $\alpha \in (0,1]$, while preserving essential properties of differential calculus.

If the function $f$ is differentiable for $t > 0$, then:
\begin{equation}
    T_\alpha f(t) = t^{1-\alpha} \frac{df(t)}{dt}.
    \label{eq:derivada-conformable-diferenciable}
\end{equation}
\begin{proof}
If $f$ is differentiable, then by the Definition of the difference quotient:
\begin{equation}
\frac{f(t + \epsilon t^{1-\alpha}) - f(t)}{\epsilon} 
= \frac{f(t + \epsilon t^{1-\alpha}) - f(t)}{\epsilon t^{1-\alpha}} \cdot t^{1-\alpha} 
\xrightarrow{\epsilon \to 0} f'(t) \cdot t^{1-\alpha}.
\end{equation}
\end{proof}

\textbf{Proof of basic properties of $T_\alpha$:}
\begin{proof}
\begin{enumerate}
\item \textbf{Linearity:} Let $h(t) = a f(t) + b g(t)$. Then:
\begin{equation}
\begin{aligned}
T_\alpha h(t) &= t^{1-\alpha} \frac{d}{dt}[a f(t) + b g(t)] \\
&= t^{1-\alpha} [a f'(t) + b g'(t)] \\
&= a\, t^{1-\alpha} f'(t) + b\, t^{1-\alpha} g'(t) \\
&= a\, T_\alpha f(t) + b\, T_\alpha g(t).
\end{aligned}
\end{equation}

\item \textbf{Product Rule:} Let $h(t) = f(t)g(t)$:
\begin{equation}
\begin{aligned}
T_\alpha[f(t)g(t)] &= t^{1-\alpha} \frac{d}{dt}[f(t)g(t)] \\
&= t^{1-\alpha} [f'(t)g(t) + f(t)g'(t)] \\
&= f(t)\, T_\alpha g(t) + g(t)\, T_\alpha f(t).
\end{aligned}
\end{equation}

\item \textbf{Quotient Rule:} Let $h(t) = \dfrac{f(t)}{g(t)}$, with $g(t) \neq 0$:
\begin{equation}
\begin{aligned}
T_\alpha\left[\frac{f(t)}{g(t)}\right] &= t^{1-\alpha} \frac{d}{dt} \left( \frac{f(t)}{g(t)} \right) \\
&= \frac{g(t)\, T_\alpha f(t) - f(t)\, T_\alpha g(t)}{g(t)^2}.
\end{aligned}
\end{equation}

\item \textbf{Power Rule:} Let $h(t) = [f(t)]^r$, with $f(t) > 0$ and $r \in \mathbb{R}$:
\begin{equation}
T_\alpha[f(t)^r] = r f(t)^{r-1} T_\alpha f(t).
\end{equation}

\item \textbf{Derivative of a constant:} If $f(t) = c \in \mathbb{R}$, then:
\begin{equation}
T_\alpha f(t) = 0.
\end{equation}

\item \textbf{Derivative of a power function:} If $f(t) = t^k$, with $k \in \mathbb{R}$, then:
\begin{equation}
T_\alpha t^k = k t^{k - \alpha}.
\end{equation}
\end{enumerate}
\end{proof}

These properties show that the Conformable  derivative preserves core rules of classical differential calculus such as linearity, the product and quotient rules, and the power chain rule—making it valuable for both theoretical and applied contexts.

\subsection{Conformable Integral as Inverse Operator}

The conformable integral of order $\alpha \in (0,1]$ is defined as:
\begin{equation}
    I_\alpha f(t) := \int_0^t \tau^{\alpha - 1} f(\tau)\,d\tau.
\end{equation}

This integral acts as the inverse of $T_\alpha$ in the following sense:
\begin{equation}
T_\alpha[I_\alpha f(t)] = f(t).
\end{equation}
\begin{proof}
Let $F(t) = I_\alpha f(t) = \int_0^t \tau^{\alpha-1} f(\tau)\,d\tau$. Then:
\begin{equation}
T_\alpha F(t) = t^{1-\alpha} \frac{d}{dt} \left( \int_0^t \tau^{\alpha-1} f(\tau)\,d\tau \right) = f(t).
\end{equation}
\end{proof}

\subsection{Conformable Laplace Transform}

The conformable Laplace transform of a function $f : [0,\infty) \to \mathbb{R}$ is given by:
\begin{equation}
    \mathcal{L}_\alpha\{f(t)\}(s) := \int_0^{\infty} f(t) e^{-s t^\alpha/\alpha} t^{\alpha - 1}\,dt.
\end{equation}

If $f$ is differentiable and $f(0)$ is well defined, then:
\begin{equation}
    \mathcal{L}_\alpha\{T_\alpha f(t)\}(s) = s \mathcal{L}_\alpha\{f(t)\}(s) - f(0).
\end{equation}
\begin{proof}
Since $T_\alpha f(t) = t^{1-\alpha} f'(t)$, we compute:
\begin{equation}
\mathcal{L}_\alpha\{T_\alpha f(t)\}(s) 
= \int_0^{\infty} f'(t) e^{-s t^\alpha/\alpha}\,dt.
\end{equation}

Using classical integration by parts:
\begin{equation}
\int_0^\infty f'(t) e^{-s t^\alpha/\alpha}\,dt = -f(0) + s \mathcal{L}_\alpha\{f(t)\}(s).
\end{equation}
\end{proof}

We now provide a direct proof of the main properties of the conformable Laplace transform. Let $0 < \alpha \leq 1$ and suppose that the conformable Laplace transform is defined by:
\begin{equation}
\mathcal{L}_\alpha[f(t)](s) = \int_0^\infty e^{-\frac{s t^\alpha}{\alpha}} f(t)\, t^{\alpha - 1} dt.
\end{equation}
\begin{proof}
\begin{enumerate}
    \item For $f(t) = c$, where $c$ is a constant:
    \begin{equation}
    \mathcal{L}_\alpha[c](s) = \int_0^\infty e^{-\frac{s t^\alpha}{\alpha}} c\, t^{\alpha - 1} dt = c \int_0^\infty e^{-\frac{s t^\alpha}{\alpha}} t^{\alpha - 1} dt.
    \end{equation}
    Using the change of variable $u = \frac{s t^\alpha}{\alpha}$, we get $t = \left( \frac{\alpha u}{s} \right)^{1/\alpha}$, and
    \begin{equation}
    dt = \left( \frac{\alpha}{s} \right)^{1/\alpha} \cdot \frac{1}{\alpha} u^{\frac{1}{\alpha} - 1} du.
    \end{equation}
    Substituting into the integral:
    \begin{equation}
    \mathcal{L}_\alpha[c](s) = c \int_0^\infty e^{-u} \left( \frac{\alpha u}{s} \right)^{\frac{\alpha - 1}{\alpha}} \left( \frac{\alpha}{s} \right)^{1/\alpha} \cdot \frac{1}{\alpha} u^{\frac{1}{\alpha} - 1} du.
    \end{equation}
    Simplifying:
    \begin{equation}
    \mathcal{L}_\alpha[c](s) = \frac{c}{s} \int_0^\infty e^{-u} du = \frac{c}{s}.
    \end{equation}

    \item For $f(t) = t^q$, where $q \in \mathbb{R}$:
    \begin{equation}
    \mathcal{L}_\alpha[t^q](s) = \int_0^\infty e^{-\frac{s t^\alpha}{\alpha}} t^q\, t^{\alpha - 1} dt = \int_0^\infty t^{q + \alpha - 1} e^{-\frac{s t^\alpha}{\alpha}} dt.
    \end{equation}
    Using the same change of variable as in item 1:
    \begin{equation}
    u = \frac{s t^\alpha}{\alpha}, \quad t = \left( \frac{\alpha u}{s} \right)^{1/\alpha}, \quad dt = \left( \frac{\alpha}{s} \right)^{1/\alpha} \cdot \frac{1}{\alpha} u^{\frac{1}{\alpha} - 1} du.
    \end{equation}
    Substituting into the integral:
    \begin{align}
    \mathcal{L}_\alpha[t^q](s) &= \int_0^\infty \left( \frac{\alpha u}{s} \right)^{\frac{q + \alpha - 1}{\alpha}} e^{-u} \left( \frac{\alpha}{s} \right)^{1/\alpha} \cdot \frac{1}{\alpha} u^{\frac{1}{\alpha} - 1} du \\
    &= \left( \frac{\alpha}{s} \right)^{\frac{q + \alpha}{\alpha}} \cdot \frac{1}{\alpha} \int_0^\infty u^{\frac{q}{\alpha}} e^{-u} du \\
    &= \alpha^{q/\alpha} s^{-(1 + q/\alpha)} \Gamma\left(1 + \frac{q}{\alpha}\right).
    \end{align}
\end{enumerate}
\end{proof}

\subsection{Conformable Laplace Transform of a Delayed Function}

If $f(t) = 0$ for $t < 0$, and $T > 0$, then:
\begin{equation}
    \mathcal{L}_\alpha\{f(t - T)\}(s) = e^{-s T^\alpha/\alpha} \cdot \mathcal{L}_\alpha\{f(t)\}(s).
\end{equation}

This mirrors the classical case, where temporal delays appear as exponential multiplicative factors in the Laplace domain.



\end{document}